%% file: arxiv.tex
\documentclass[11pt]{article}

\usepackage[margin=1in]{geometry}

\usepackage{amsmath,amssymb,amsthm}
\usepackage{mathtools}

\usepackage{graphicx}
\usepackage{subcaption}
\usepackage{array}
\usepackage{tikz}
\usetikzlibrary{automata,positioning}

\usepackage{xcolor}

\usepackage[numbers]{natbib}
\usepackage[colorlinks=true,
            citecolor=blue,
            linkcolor=blue,
            urlcolor=blue]{hyperref}
\usepackage[nameinlink,capitalise,noabbrev]{cleveref}

\newtheorem{theorem}{Theorem}
\newtheorem{lemma}{Lemma}
\newtheorem{proposition}{Proposition}

\newtheorem{assumption}{Assumption}

\theoremstyle{remark}

\usepackage{pgfplots}
\pgfplotsset{compat=1.18}

\newcommand\numberthis{\addtocounter{equation}{1}\tag{\theequation}}

\newcommand{\opt}{x^\star}

\allowdisplaybreaks 
\usepackage{wrapfig}

\title{Efficiency--Reward Trade-Off in Queues with Dynamic Arrivals}

\author{
Tianze Qu\thanks{Department of Industrial and Operations Engineering, University of Michigan.}
\and
Sushil Mahavir Varma\thanks{Department of Industrial and Operations Engineering, University of Michigan.}
}

\date{\today}

\begin{document}

\maketitle

\begin{abstract}
Motivated by applications in online marketplaces such as ride-hailing platforms and payment channel networks, we study a single server queue with service rate $\mu$ and queue-length-dependent arrival rate $\lambda(q) \in [0, \lambda_{\max}]$. The system operator receives a reward $F(\lambda)$ by setting an arrival rate $\lambda$, where $F$ can represent objectives such as throughput, revenue, or social welfare. The goal is to characterize the Pareto frontier between long-run average reward and expected queue length ($\mathbb{E}[q]$). We first establish a fluid upper bound on the reward achievable under any control policy $\{\lambda(q)\}$, and define regret as the gap from this upper bound. We then minimize $\mathbb{E}[q]$ subject to the regret being at most $\varepsilon$.
In the small-market regime $(\lambda_{\max} \leq \mu)$,
we show that no control policy can improve over the classical heavy-traffic scaling of $\mathbb{E}[q] = \Omega(1/\varepsilon)$. In contrast, in the large-market regime $(\lambda_{\max} > \mu)$, we substantially improve over the classical scaling by explicitly constructing control policies with expected queue lengths at most $O(1/\sqrt{\varepsilon})$ or $O(\log (1/\varepsilon))$, depending on the curvature structure of \(F\). Moreover, we establish universal lower bounds of $\Omega(1/\sqrt{\varepsilon})$ or $\Omega(\log (1/\varepsilon))$ respectively under any general control policy. These results can be viewed as (non asymptotic) heavy-traffic theory for queues with dynamic arrivals.
\end{abstract}


\section{Introduction}
Over the past century, queueing theory has emerged as a well-established discipline with a widespread impact on several real-life applications \cite{erlang1909theory} such as telecommunication systems, manufacturing systems, cloud computing and transportation systems. Fundamental queueing models provide a principled way of  understanding several naturally arising design and optimization trade-offs in such service systems. For example, Kingman's formula \cite{kingman1961single} formalizes the trade-off between delay and utilization by studying a single server queue. The Halfin-Whitt  regime \cite{halfin1981heavy} optimizes the trade-off between delay and staffing costs in a multi-server queue. More examples include trade-off between infrastructure cost and throughput for systems with finite waiting space, and trade-off between flexibility cost and delay for systems with heterogeneous customer and server types \cite{shi2019process}. Such analyses are often carried out under the setting of queue-length-independent exogenous arrivals. On the other hand, queueing systems with state-dependent arrivals are relatively less studied, and similar trade-offs arising in design of such systems are not fully understood.

The widespread deployment of digital service platforms has enabled system operators to monitor system states in real time and dynamically influence customer arrivals through pricing, information disclosure, or access control. As a result, many modern service systems operate in dynamic environments where arrivals are no longer exogenous. Examples include dynamic pricing in ride-hailing platforms \cite{varma2023dynamic}, congestion-based pricing for electric vehicle charging stations \cite{anjos2025optimal}, information design in hospital waiting rooms \cite{zang2024impact}, and admission control in telecommunication and cloud computing systems \cite{maguluri2012stochastic}.  
Across such systems, operators seek to optimize various performance objectives, such as throughput, profit, or social welfare, while maintaining operational efficiency, typically measured through customer waiting times or queue lengths. Understanding how to jointly optimize system performance and efficiency is therefore a central challenge.

Motivated by these applications, we develop a theoretical framework for queues with dynamic arrivals, focusing on a single server queue with queue-length-dependent arrival rate $\lambda(q)$ and service rate $\mu=1$. In particular, for a given queue length $q \in \mathbb{Z}_+$, the system operator is free to set $\lambda(q) \in [0, \lambda_{\max}]$ and receives a reward $F(\lambda(q))$. The objective is to maximize the long-run average reward $\mathbb{E}\left[F(\lambda(\bar{q}))\right]$, where $\bar{q}$ is the steady-state queue length. At the same time, the operator must control system congestion, measured by the expected steady-state queue length $(\mathbb{E}[\bar{q}])$. The goal of this paper is to characterize the Pareto frontier between these two key objectives for a general function $F$ and any state-dependent control policy $\{\lambda(q)\}$. In particular, let $F^\star$ be the maximum possible reward under stable control policies and define the regret to be $R(\lambda) = F^\star - \mathbb{E}\left[F(\lambda(\bar{q}))\right]$. Then, our objective is to minimize $\mathbb{E}[\bar{q}]$ subject to the regret being at most $\varepsilon$.

To put the problem formulation in context, consider the special case of a linear reward function $F(x) = x$. In this setting, we are minimizing the expected queue length while ensuring small regret $(R(\lambda) \leq \varepsilon)$, which corresponds to the throughput $(\mathbb{E}[\lambda(\bar{q})])$ being at most $\varepsilon$ less than the maximum possible value $F^\star = \mu$. 
Observe that this is a natural generalization of the heavy-traffic regime for queues with constant arrival rate \cite{kingman1961single}, where the arrival rate approaches the capacity $\mu$. Thus, our problem formulation naturally extends the definition of the heavy-traffic regime to queues with dynamic arrivals.

The power of our problem formulation lies in its generality, i.e., we allow for general reward functions $F$. For example, we illustrate how dynamic pricing for revenue maximization in an $M/M/1$ queue can be viewed as a special case of our model. Suppose that a customer arrives to the system and observes a price of $p$. The customer then compares this price $p$ with its own valuation $v$ and joins the system if and only if $v > p$. A popular choice to model heterogeneous customers is to sample $v$ from a known distribution with CDF $G$. In this case, the incoming customer's joining probability is $\mathbb{P}\{v > p\} = 1 - G(p)$, resulting in an expected revenue of $p\mathbb{P}\{v > p\} = (1 - G(p))p$ per potential customer. One can write the expression of this revenue in terms of the effective arrival rate by a simple change of variables. In particular, if the exogenous arrival rate of customers is $\lambda_{\max}$, then the effective arrival rate is $\lambda = \lambda_{\max} (1-G(p))$. Thus, the revenue is equivalently given by $\lambda_{\max}p\mathbb{P}\{v > p\}=\lambda G^{-1}(1-\lambda / \lambda_{\max})$. So, setting $F(\lambda) = \lambda G^{-1}(1-\lambda / \lambda_{\max})$ in our model is exactly the problem of dynamic pricing for revenue maximization with price-sensitive customers. It turns out that our framework can also be used to study the setting of price and delay sensitive customers \cite{kim2018value} and we refer the reader to Section~\ref{value of DP in large queue system} for more details.

\subsection{Overview of Main Results and Contributions}

When the market size is small, i.e., $\lambda_{\max} \leq \mu =1$, we show that any admissible control policy must incur \(\mathbb{E}[\bar q] =  \Omega(1/\varepsilon)\), which coincides with the classical heavy-traffic scaling.
Consequently, dynamic arrival control does not yield efficiency gains in small markets. In contrast, when the market is large \((\lambda_{\max}>1)\), dynamic arrival control enables a much richer set of efficiency-reward trade-offs. In this regime, a key insight of our analysis is that the nature of this trade-off depends critically on whether \(F\) exhibits strict Jensen-type behavior at the capacity threshold \(\mu=1\). Specifically, we distinguish between two regimes depending on whether \(F\) satisfies
\begin{equation}
\begin{aligned}
F(1)>pF(x_1)+(1-p)F(x_2) \label{eq:concave_like}
\end{aligned}
\end{equation}
for all \(x_1,x_2\in[0,\lambda_{\max}]\setminus\{1\},
\,\,p\in(0,1)\) with \(px_1+(1-p)x_2=1\).
This condition captures a strictly concave-like behavior of \(F\) around the capacity. The table below summarizes the resulting asymptotic behavior of the optimal expected steady-state queue length under different classes of arrival control policies, depending on whether \(F\) satisfies \eqref{eq:concave_like}.

\begin{table}[h]
    \centering
    \begin{tabular}{|m{4.7cm}<{\centering}|m{3.7cm}<{\centering}|m{3.7cm}<{\centering}|}
         \hline
        Optimal $\mathbb{E}[\bar q]$ & $F$ is not concave-like (does not satisfy \eqref{eq:concave_like}) & $F$ is concave-like (satisfies \eqref{eq:concave_like}) \\
         \hline
           Static arrival rate & $\Theta\left(1/\varepsilon\right)$ &  $\Theta\left(1/\varepsilon\right)$\\
         \hline
         Two arrival rates & $\Theta\left(\log\frac{1}{\varepsilon}\right)$ & $\Theta\left(\frac{1}{\sqrt{\varepsilon}}\sqrt{\log\frac{1}{\varepsilon}}\right)$\\
         \hline
         Fully dynamic arrival rates & $\Theta\left(\log\frac{1}{\varepsilon}\right)$ & $\Theta\left(\frac{1}{\sqrt{\varepsilon}}\right)$ \\
         \hline \hline
         Universal lower bound  & $\Omega\left(\log\frac{1}{\varepsilon}\right)$ & $\Omega\left(\frac{1}{\sqrt{\varepsilon}}\right)$ \\
         \hline
    \end{tabular}
    \caption{Summary of the behavior under different arrival settings when $\lambda_{\max}> 1$.}
\end{table} 

When the reward function \(F\) is concave-like, our main result shows that the optimal expected queue length satisfies \(q^\star = \Theta(1/\sqrt{\varepsilon})\). In particular, we establish a universal lower bound showing that the expected queue length must be at least \(\Omega(1/\sqrt{\varepsilon})\) under any \emph{arbitrary} admissible arrival policy. We then construct a fully dynamic, state-dependent arrival policy that matches this lower bound, thereby attaining the fundamental efficiency limit. By contrast, we show that restricting attention to simpler policy classes leads to strictly worse performance. Any two-arrival-rate policy necessarily incurs an expected queue length of at least \(\Omega(\sqrt{\log(1/\varepsilon)}/\sqrt{\varepsilon})\), exhibiting an unavoidable logarithmic inefficiency, while static arrival policies suffer an even larger delay of order \(\Omega(1/\varepsilon)\).

Our main methodological contribution is to go beyond simple two-arrival policies and understand the performance of fully dynamic policies. In particular, to establish the upper bound of $q^\star = O(1/\sqrt{\varepsilon})$, we provide an explicit analytical characterization of fully dynamic arrival control policy and demonstrate that such a policy is necessary to attain the order-optimal efficiency–reward trade-off for concave-like reward functions $F$. Moreover, another methodological contribution is to establish the lower bound $q^\star = \Omega(1/\sqrt{\varepsilon})$ while allowing for general arrival control policies and a general concave-like function $F$. 
We identify that using the properties of birth-death chain, one can view the regret $R(\lambda) = F^\star - \mathbb{E}_\pi [F(\lambda(\bar q))]$ as an $f-$divergence between the stationary distribution and its shifted and appropriately scaled version for an appropriately defined $f$. Then, in a nutshell, our proof  lower bounds the $f-$divergence in terms of the TV-distance, which results in a pointwise upper bound on the stationary distribution, completing the proof.

Next, when \(F\) is not concave-like, we show that the system exhibits a fundamentally different trade-off. In this case, the optimal expected queue length satisfies $q^\star = \Theta(\log 1/\varepsilon)$. A carefully designed two-arrival policy is sufficient to achieve an expected steady-state queue length of \(O(\log 1/\varepsilon)\), which matches the universal lower bound of $\Omega(\log 1/\varepsilon)$ under any \textit{arbitrary} admissible policy. Meanwhile, the static arrival policy continues to incur an expected queue length of at least \(\Omega(1/\varepsilon)\). Note that the optimal queue length scaling is much smaller than in the concave-like case as one can exploit the non-concavity of $F$ at the capacity $(\mu = 1)$ to design a better performing policy. In particular, violation of \eqref{eq:concave_like} implies the existence of $x_1 > 1 > x_2 > 0$, which when mixed $(p F(x_1) + (1-p) F(x_2))$, results in a better reward compared to deterministically operating at the capacity $(F(1))$. One can therefore operate close to this mixture while ensuring low queue lengths by setting the arrival rate of $x_2 < 1$ when the system is congested and the arrival rate of $x_1 > 1$ when it is less busy.

Taken together, the concave-like and non-concave-like regimes yield a complete characterization of efficiency–reward trade-offs in single-server queues with endogenous arrivals. Our results precisely identify when fully dynamic arrival control is necessary to achieve order-optimal performance and when simpler policies already suffice, and show how the curvature of the reward function at capacity shapes the optimal control structure. This unified framework strengthens and generalizes existing results in revenue management and dynamic pricing, moving beyond restricted policy classes in \cite{kim2018value} to fully dynamic control.



\section{Literature Review}
{\bf Dynamic Arrival/Service Rate Control.}
State-dependent arrival/service control has been studied widely in the queueing literature. For example, \cite{george2001dynamic}
is one of the earliest works studying the dynamic control of a queue. They show that, in order to minimize the holding and service costs, the optimal service rate policy exhibits a non-decreasing structure in the queue length. Afterwards, \cite{ata2006dynamic, adusumilli2010dynamic} extend this framework to joint arrival and service rate control. There is also a stream of literature that investigates the structure of the optimal policy under more general arrival processes beyond the Poisson assumption, such as \cite{kumar2013dynamic, xia2017optimal}. Moreover, \cite{arapostathis2019optimal} derive asymptotically optimal control policies for a Markov-modulated multiclass many-server queue. Dynamic control also finds applications in a wide range of service and networked systems, such as load balancing in server farms \cite{tsitsiklis2013power}, throughput optimization in payment channel networks \cite{varma2021throughput} and the design of energy-aware systems \cite{down2022optimal}. Out of these papers, the most relevant to our work is \cite{tsitsiklis2013power}, which studies throughput maximization in large-scale server systems and shows that simple threshold-type policies are optimal in heavy traffic, achieving a queue lengths of order $\log(1/\varepsilon)$ as opposed to the standard heavy traffic scaling of $1/\varepsilon$ when even limited resource pooling is available. Our results for the non-concave-like regime (in which throughput is a special case) also exhibit a similar exponential improvement in expected queue length. The underlying reason in both settings is the capability of overloading the queue when its queue length is small, which is done via resource pooling in \cite{tsitsiklis2013power} and via dynamic arrivals in our setting.


{\bf Dynamic Pricing for Queues.} Pricing serves as a natural mechanism for dynamic arrival rate control, as customer demand typically responds to price changes. A substantial body of literature studies dynamic pricing in queueing systems by formulating a Markov decision process. A long line of work proves several structural properties of optimal pricing policies for various increasingly general models such as queues with finite buffer \cite{low1974a}, queues with infinite buffer \cite{low1974b}, customers sensitive to price and waiting time \cite{chen2001state}, time-varying system parameters \cite{yoon2004optimal}, multi-class queues \cite{kim2003optimal, maoui2007congestion, ccil2011dynamic}.


The literature most related to us is the asymptotic analysis of pricing policies in queueing systems such as \cite{kim2018value} for a single server queue, \cite{balseiro2025dynamic} for a loss system, and \cite{varma2023dynamic} for two-sided queues. We next compare our results with each of these papers.

\citet{kim2018value} shows that a simple two-price scheme is near-optimal with an optimality rate of $O((n \log n)^{1/3})$ where $n$ is the system size. Moreover, they show that for a class of pricing policies, one cannot achieve better than $\Omega(n^{1/3})$ optimality rate. On the other hand, we design a fully dynamic policy, which removes the additional log factor and achieves an optimality rate of $O(n^{1/3})$. Moreover, we strengthen the $\Omega(n^{1/3})$ lower bound to hold for any admissible policy. A detailed comparison with \cite{kim2018value} is provided in Section~\ref{value of DP in large queue system}.

\citet{varma2023dynamic} analyzes a two-arrival policy and establishes an optimality rate of $O(n^{1/3})$. Thus, a two-arrival policy itself is optimal unlike the two-price policy in a one-sided queue \cite{kim2018value} and so one needs a fully dynamic policy for a one-sided queue to achieve optimality. These results show that optimizing revenue in a two-sided queue is inherently easier than in a classical single server queue.
In particular, in a two-sided system, keeping the imbalance close to zero is desirable: when the system is balanced, arrivals from either side can still occur and subsequently be cleared through matching. In a one-sided queue, however, an empty system implies an idle server and hence lost utilization. Achieving near-optimal reward therefore requires maintaining sufficient backlog while avoiding excessive congestion, making simple two-level control intrinsically less effective than in two-sided systems.



Our scaling results are closely related to \cite{balseiro2025dynamic}. Similar curvature-based insights also appear more broadly in the dynamic resource allocation literature \cite{besbes2025dynamic}. In particular, the locally smooth and locally linear regimes 
identified in \cite{balseiro2025dynamic} correspond closely to our concave-like and non-concave-like behavior around capacity, as formalized in \eqref{eq:concave_like}. A key distinction, however, is that the scaling in \cite{balseiro2025dynamic} is governed by the local curvature of the reward function, whereas ours depends on the curvature of the convex hull of the reward function, which is a global property.


{\bf Heavy Traffic in Single Server Queues.}
A classical result of \cite{kingman1961single} shows that, for a $GI/GI/1$ queue with arrival rate $\lambda=\mu-\varepsilon$ and service rate $\mu$, the expected queue length grows on the order of $\Theta(1/\varepsilon)$. Building on this seminal result, a large body of literature develops heavy-traffic approximations for more complex stochastic systems and obtains asymptotic characterizations of queue lengths and system performance for parallel server queues \cite{harrison1998heavy}, multiclass queues \cite{williams1998diffusion, gamarnik2012multiclass}, and input queued switch \cite{maguluri2018optimal}.

Heavy-traffic analysis for queues with dynamic arrivals, however, remains relatively limited. In this work, we characterize the heavy-traffic scaling of a single-server queue with state-dependent arrival control, extending the classical heavy-traffic perspective with exogenous arrivals. We believe our results can serve as building blocks for understanding more complex queueing systems with dynamic arrival control. Prior work on queues with dynamic arrivals has primarily focused on fluid regimes \cite{maglaras2003pricing, maglaras2006revenue, lee2014optimal}, while \cite{kim2018value} heuristically derives the diffusion scaling without a formal proof. Our results can be interpreted as characterizing pre-limit behavior that underlies these fluid and diffusion approximations.

\section{Notation}
In this paper, we use $\mathbb{R}_+$ and $\mathbb{Z}_+$ to denote the sets of non-negative real numbers and non-negative integers, respectively, and $\mathbb{R}_{++}$ and $\mathbb{Z}_{++}$ to denote the sets of positive real numbers and positive integers. We define $[n] := \{0,1,\ldots,n\}$. For any real number $x$, we use $\lfloor x \rfloor$ and $\lceil x \rceil$ to denote the largest integer less than or equal to $x$ and the smallest integer greater than or equal to $x$, respectively.

Throughout the paper, $\varepsilon>0$ denotes a sufficiently small parameter, and all order notation is with respect to $\varepsilon$. For two nonnegative functions $f(\varepsilon)$ and $g(\varepsilon)$, we write $f(\varepsilon) = O(g(\varepsilon))$ if there exists a constant $C>0$ such that $f(\varepsilon) \le C g(\varepsilon)$ for all sufficiently small $\varepsilon$. Similarly, we write $f(\varepsilon) = \Omega(g(\varepsilon))$ if there exists a constant $c>0$ such that $f(\varepsilon) \ge c g(\varepsilon)$ for all sufficiently small $\varepsilon$. We write $f(\varepsilon) = \Theta(g(\varepsilon))$ if both relations hold. 

\section{Model Description}\label{model}
We model a generic service system as an $M_q/M/1$ queue with state-dependent arrivals. In particular, let the number of customers waiting in the system (including the one in service) be denoted as $q$ with state space $\mathcal{S} \subseteq \mathbb{Z}_+$. The arrival rate to the queue is state-dependent and given by the arrival rate function $\lambda: \mathbb{Z}_+\to \mathbb{R}_+$. In turn, the arrivals are governed by a Poisson process with rate $\lambda(q)$ when $q \in \mathbb{Z}_+$ customers are waiting in the system. The server is assumed to serve the customers at a fixed rate of $\mu = 1$. In turn, the service time is exponential with rate $1$.

The queue length $\{q(t) \in \mathbb{Z}_+ : t \geq 0\}$ of an $M_q/M/1$ queue evolves as a continuous time Markov chain (CTMC); more specifically, a birth-and-death chain with birth rate $\lambda(q)$ and death rate $\mu=1$ as depicted in Figure~\ref{fig:birth_and_death}. We restrict our attention to the communicating class involving state $0$, i.e., the state-space of the CTMC is given by $\mathcal{S} = [q_0]$, where $q_0 = \inf\left\{q \geq 1: \lambda(q) = 0\right\}$. If $q_0 = \infty$, then, $\mathcal{S} = \mathbb{Z}_+$. In addition, we assume that the arrival rate is bounded, i.e., $\lambda(q) \in [0, \lambda_{\max}]$ for some $\lambda_{\max} \geq 1$ for all $q \in \mathbb{Z}_+$, ensuring that the CTMC is non-explosive. In a real service system, $\lambda_{\max} \geq 1$ models the size of the available market, while $\lambda(q)$ is the actual arrival rate depending on the incentives offered to the customer by the system operator.




\begin{figure}[htbp]
    \centering
    \begin{tikzpicture}[->, >=stealth, auto, node distance=1.5cm, semithick]

        \node[state] (S0) {0};
        \node[state] (S1) [right=of S0] {1};
        \node[state] (S2) [right=of S1] {2};
        \node (dots) [right=of S2] {$\cdots$};
        \node[state] (Sn) [right=of dots] {$n$};
        \node (dots2) [right=of Sn] {$\cdots$};

        \path
            (S0) edge[bend left] node {$\lambda(0)$} (S1)
            (S1) edge[bend left] node {$1$} (S0)
            (S1) edge[bend left] node {$\lambda(1)$} (S2)
            (S2) edge[bend left] node {$1$} (S1)
            (S2) edge[bend left] node {$\lambda(2)$} (dots)
            (dots) edge[bend left] node {$1$} (S2)
            (dots) edge[bend left] node {$\lambda(n-1)$} (Sn)
            (Sn) edge[bend left] node {$1$} (dots)
            (Sn) edge[bend left] node {$\lambda(n)$} (dots2)
            (dots2) edge[bend left] node {$1$} (Sn);

    \end{tikzpicture}
    \caption{M/M/1 Birth--Death Chain with State-Dependent Arrivals} \label{fig:birth_and_death}
\end{figure}

For completeness, the transition rate matrix (Chapter 2.1 in \cite{norris1998markov}) $Q = (Q_{ij}: i,j\in \mathbb{Z}_+)$ for this CTMC is as follows:
\begin{equation*}
Q_{ij} = \begin{cases}
    -\lambda(i)-\mathbf{1}_{i>0} & \text{for } i \in \mathcal{S} \text{ and } j=i, \\
    \lambda(i) & \text{for } i \in \mathcal{S} \text{ and } j=i+1, \\
    1 & \text{for } i \in \mathcal{S} \backslash \{0\} \text{ and } j=i-1, \\
    0 &\text{elsewhere}.
\end{cases}
\end{equation*}
To ensure positive recurrence of the CTMC, we make the following mild assumptions on the control policy: 
\begin{assumption}\label{assump3}
We assume that $\sum_{i\in\mathcal{S}} \prod_{q=0}^{i}\lambda(q)<\infty$.
\end{assumption}
We refer to any control policy satisfying the above assumption as a ``stable control policy'' or simply a ``stable policy''. Thus, by standard results on CTMC (see e.g., Theorem 3.5.3 in \cite{norris1998markov}), there exists a unique stationary distribution $\{\pi_q\}_{q\in \mathcal{S}}$ of the CTMC given by
\begin{equation*}
\pi_q = \pi_0\prod_{k=0}^q \lambda(k) \ \forall q \in \mathcal{S}, \text{ with } \pi_0 = \frac{1}{\sum_{i\in \mathcal{S}} \prod_{q=0}^i\lambda(q)}. 
\end{equation*}
We define the \textit{efficiency} of the control policy as the long-term average queue lengths $\mathbb{E}[\bar{q}]$, where $\bar{q}$ is a random variable endowed with the stationary distribution $\pi$
In addition to efficient system operation, usually in practice, the goal of the system operator is to maximize a certain objective like throughput, revenue, social welfare, etc. We model this by a generic reward function $F: [0, \lambda_{\max}] \rightarrow \mathbb{R}_+$ and consider the following long-run average reward:
\begin{equation*}
r(\lambda) := \liminf_{T\to \infty}\frac{1}{T}\int_{0}^T F(\lambda(q(t))) dt = \mathbb{E}_\pi\left[F(\lambda(\bar{q}))\right],
\end{equation*}
where the last equality holds due to the Ergodic Theorem (see Theorem 3.8.1 in \cite{norris1998markov}). As an example, a linear $F$ would correspond to throughput maximization; or a concave $F$ could be used to model revenue maximization \cite{varma2023dynamic}. More generally, $F$ may not even be concave for systems with strategic agents \cite{varma2020near}. As we will see, our results are general and encompass all of these cases. We make the following mild structural assumptions on $F$:
\begin{assumption}\label{assump2}
    We assume that $F : [0,\lambda_{\max}] \rightarrow \mathbb{R}_+$ is continuously differentiable. 
\end{assumption} 
{
\begin{assumption} \label{eq:smooth_F}
    We make the following assumptions for the second and higher order derivative of $F$:
    \begin{itemize}
        \item The second order derivative $F''$ exists and continuous in $(0,\lambda_{\max})$, and $F''(1)<0$.
        \item The third and fourth order derivative $F^{(3)}$ and $ F^{(4)}$ exist and bounded, i.e. $\exists M, K>0$ such that
        \[
        \sup\limits_{x\in [0,\lambda_{\max}]} |F^{(3)}(x)|\leq M, \quad  \sup\limits_{x\in [0,\lambda_{\max}]} |F^{(4)}(x)|\leq K.
        \]
    \end{itemize}
\end{assumption}
While Assumption~\ref{assump2} is already assumed in Assumption~\ref{eq:smooth_F}, not all results in the paper require the stronger assumption (Assumption~\ref{eq:smooth_F}). In what follows, we explicitly specify for each theorem and proposition, the assumptions under which it holds.
}

In addition, without loss of generality, we assume $F(0) = 0$.
To understand the reward maximization problem, we consider an alternate optimization problem defined as follows:
\begin{align}
    F^\star = \sup_{\alpha \in \mathcal{P}([0, \lambda_{\max}]): \ X \sim \alpha}  \mathbb{E}_\alpha[F(X)] \quad
    \text{s.t.} \quad  \mathbb{E}_\alpha[X] \leq 1,\label{eq:def Fstar}
\end{align}
where $\mathcal{P}([0, \lambda_{\max}])$ is the set of probability measures on $[0, \lambda_{\max}]$. It turns out that the above optimization problem provides an upper bound on the achievable reward $r(\lambda)$ under any stable control policy as stated below:
\begin{proposition}\label{lemma3.2}
It holds that $\mathbb{E}_\pi[F(\lambda(\bar q))]\leq F^\star$ for any control policy satisfying Assumption~\ref{assump3}.
\end{proposition}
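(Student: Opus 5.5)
The plan is to show that the law of $\lambda(\bar q)$ under the stationary distribution is itself a feasible point of \eqref{eq:def Fstar}. Let $\alpha$ be the pushforward of $\pi$ under $q \mapsto \lambda(q)$. It is a probability measure on $[0,\lambda_{\max}]$, since the arrival rates are bounded. Moreover $\mathbb{E}_\alpha[F(X)] = \mathbb{E}_\pi[F(\lambda(\bar q))]$, and this expectation is finite because $F$ is continuous on a compact interval by Assumption~\ref{assump2}. Hence it suffices to verify $\mathbb{E}_\alpha[X] = \mathbb{E}_\pi[\lambda(\bar q)] \le 1$.

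For this I would use detailed balance of the birth-death chain. The product formula for $\pi$ under Assumption~\ref{assump3} gives $\pi_q\lambda(q) = \pi_{q+1}\cdot 1$ for every $q$ with $q+1\in\mathcal{S}$. At the boundary state $q_0$ (when $\mathcal{S}=[q_0]$ is finite) we have $\lambda(q_0)=0$ by definition of $q_0$. Summing over $q\in\mathcal{S}$ yields
\[
\mathbb{E}_\pi[\lambda(\bar q)] = \sum_{q\in\mathcal{S}} \pi_q \lambda(q) = \sum_{q\ge 1,\, q\in\mathcal{S}} \pi_q = 1-\pi_0 \le 1.
\]
Rearranging a series of nonnegative terms is legitimate, and the sum is bounded by $\lambda_{\max}$, so convergence is not an issue. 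In words, the throughput $\mathbb{E}_\pi[\lambda(\bar q)]$ equals the departure rate $\mathbb{P}(\bar q>0)$, which is at most the service rate $\mu = 1$.

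This shows $\alpha$ is feasible for \eqref{eq:def Fstar}, so $\mathbb{E}_\pi[F(\lambda(\bar q))] = \mathbb{E}_\alpha[F(X)] \le F^\star$. The only delicate point is the detailed-balance indexing. The product formula as printed, $\pi_q = \pi_0\prod_{k=0}^q\lambda(k)$, appears to be off by one. The correct form is $\pi_q = \pi_0\prod_{k=0}^{q-1}\lambda(k)$, and I would rederive it directly from the global balance equations of the birth-death chain rather than rely on the printed expression. I would also handle the finite-$q_0$ case separately, to make sure no flux crosses from $q_0$ to $q_0+1$.
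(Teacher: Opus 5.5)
Your proposal is correct and matches the paper's proof: you take the pushforward $\alpha$ of $\pi$ under $\lambda$ and show it is feasible via the flux identity $\mathbb{E}_\pi[\lambda(\bar q)]=\sum_q \pi_{q+1}=1-\pi_0$, which is the paper's Equation~\eqref{idle time}. Your remark about the off-by-one in the printed product formula is also right, and it does not affect the argument, since the paper likewise uses the recursion $\pi_{q+1}=\lambda(q)\pi_q$ directly after setting $\pi_q=0$ for $q\notin\mathcal{S}$.
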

The above proposition is proved by constructing a feasible $\alpha$ such that the objective function of \eqref{eq:def Fstar} equals $\mathbb{E}_{\pi}[F(\lambda(\bar{q}))]$ for any control policy $\lambda$. In particular, define $\alpha \in \mathcal{P}([0, \lambda_{\max}])$ as the pushforward measure of the steady-state distribution $\pi$ by the control policy $\lambda$ and note that $\mathbb{E}_\alpha[f(X)] = \mathbb{E}_\pi[f(\lambda(\bar{q}))]$ for any function $f$. In particular, we have $\mathbb{E}_\alpha[F(X)] = \mathbb{E}_\pi[F(\lambda(\bar{q}))]$. In addition, under the stability assumption (Assumption~\ref{assump3}), one can show that the effective arrival rate is less than the service rate, and so, $\mathbb{E}_\alpha[X] = \mathbb{E}_\pi[\lambda(\bar{q})] < 1$ establishing feasibility of $\alpha$. We refer to Appendix~\ref{proof for prop 4.1} for the details of the proof.


Motivated by Proposition~\ref{lemma3.2}, we now define the regret of a control policy as $R(\lambda) := F^\star - \mathbb{E}_\pi[F(\lambda(\bar q))] \geq 0$ and focus on minimizing the regret. More precisely, our goal is to understand Pareto frontier between optimizing \textit{cost} (defined as regret minimization) and \textit{efficiency} (defined as the average queue length minimization) as stated in the following optimization problem:
\begin{align}
    q^\star = \min_{\lambda \in \Lambda}\,\mathbb{E}_{\pi}[\bar{q}] 
    \quad \text{s.t.}\quad R(\lambda) = F^\star-\mathbb{E}_{\pi}[F(\lambda(\bar{q}))]\leq \varepsilon, \label{eq:constraint}
\end{align}
where $\Lambda$ is some set of control policies satisfying Assumption~\ref{assump3}. We now show that the above optimization problem can be used to model several settings in the literature.

\textbf{Case 1 (Classical Heavy Traffic):} Consider a linear reward function ($F(x) = x$) and $\Lambda$ to be the set of all stable \textit{static} policies, i.e., $\lambda \equiv c$ for some $c \in [0, 1)$. Then, we simply have $F^\star = 1$ by \eqref{eq:def Fstar}, and so, we need $c \geq 1 -\varepsilon$ to ensure at most $\varepsilon$ regret as in \eqref{eq:constraint}. In turn, we recover the classical heavy traffic scaling of $q^\star = \Theta(\frac{1}{\varepsilon})$.   

\textbf{Case 2 (Throughput Maximization):} Consider a linear reward function ($F(x) = x$) and $\Lambda$ to be the set of \textit{all} stable policies. In this case, $F^\star = 1$ continues to hold like Case 1. Thus, at most $\varepsilon$ regret corresponds to $\mathbb{E}_{\pi}[\lambda(\bar{q})] \geq 1 - \varepsilon$, i.e., the throughput is at least $1-\varepsilon$. Unlike the previous case, we are now allowed to vary the arrival rate as a function of the queue lengths. We shall see that this freedom allows us to reduce the optimal expected queue length $q^\star$ from $\Theta(\frac{1}{\varepsilon})$ in Case 1 to $\Theta(\log\frac{1}{\varepsilon})$.

\textbf{Case 3 (Revenue Maximization):} Consider a setting where the arrival rates are modulated via prices, i.e., the system operator sets a state-dependent price $(p(q))$, which in turn, determines the arrival rate $(g(p(q)))$, defined via the demand curve $g$. Then, consider $F$ to be the revenue function defined as the product of the price and arrival rate, i.e., $F(\lambda) = \lambda g^{-1}(\lambda)$. A popular assumption in the revenue management literature is that $F$ is a (strictly) concave function. Under this setting, we show that $q^\star = \Theta(\frac{1}{\sqrt{\varepsilon}})$. Thus, one continues to do better than a static control policy (Case 1) but does not achieve exponential improvement (Case 2).

In the next section, we establish the above three cases with much more generality. In particular, we show that we fall into Case 1 whenever $\lambda_{\max} = 1$ irrespective of the reward function $F$. Otherwise, if $\lambda_{\max} > 1$, then we fall into Case 3 whenever $F$ is concave-like (to be defined later) and into Case 2 for all other reward functions $F$.

{First, consider the case that there exists $\opt < 1$ such that $F(\opt) = F^\star$. In this case, the static policy $\lambda \equiv \opt$ ensures $R(\lambda) = 0$ while achieving $\mathbb E[\bar q] = \Theta_{\varepsilon}(1)$.

To avoid trivial scenarios, we make the following two assumptions:
\begin{assumption}\label{assump4}
    Assume $ F(x)<F^\star \text{ for any } x\in [0,1)$.
\end{assumption}

\begin{assumption}\label{assump F'(1)>0}
    We assume $F'(1)>0$.
\end{assumption}
Observe that $ F(x)<F^\star$ for all $x\in [0,1)$ implies that the optimizer of \eqref{eq:def Fstar} is achieved at the boundary $(\mathbb{E}_{\alpha}[X] = 1)$, or in other words, the service system is capacity constrained. 

This condition also implies that the reward does not decrease as $x \to 1$, and hence $F'(1)\ge 0$. In particular, when the Assumption~\ref{assump F'(1)>0} $F'(1)>0$ hold, the reward is strictly increasing as we approach the capacity limit. We primarily focus on this case.

We shall see that when Assumption~\ref{assump F'(1)>0} is not satisfied, even the static policy performs as well as the dynamic policy as we will see in the next section. Thus, to meaningfully distinguish the advantage of dynamic policy, we make this assumption.
}

\section{Main Results: Efficiency-Reward Trade-Off}
\subsection{Small-Market Control Policies} \label{small_market}
We consider the impact of market size on optimization problem \eqref{eq:constraint}. We show that if the market size is small, i.e. $\lambda_{\max} = 1$, then, we recover the classical heavy traffic scaling, i.e., there is no benefit of dynamic control. This result is formalized below.
\begin{proposition}\label{prop small market}
   Under Assumptions \ref{assump2}, \ref{assump4} and \ref{assump F'(1)>0}, $q^\star= \Omega\left(\frac{1}{\varepsilon}\right)$ for all small-market $(\lambda_{\max} = 1)$ control policies. 
\end{proposition}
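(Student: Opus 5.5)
Since $\lambda_{\max}=1$, every feasible $\alpha$ in \eqref{eq:def Fstar} lives on $[0,1]$, so the constraint $\mathbb{E}_\alpha[X]\le 1$ is automatically satisfied. Hence $F^\star=\max_{x\in[0,1]}F(x)$, and Assumption~\ref{assump4} forces $F^\star=F(1)$. The plan is to bound the regret from below by a multiple of the \emph{throughput gap} $1-\mathbb{E}_\pi[\lambda(\bar q)]$. This gap equals $\pi_0$, since flow balance gives $\mathbb{E}_\pi[\lambda(\bar q)]=\mathbb{P}_\pi(\bar q>0)=1-\pi_0$. Small regret then forces small $\pi_0$, and a birth--death argument with all birth rates at most $1$ shows that small $\pi_0$ forces $\mathbb{E}[\bar q]=\Omega(1/\pi_0)$.

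\textbf{Step 1 (regret $\gtrsim \pi_0$).} We seek a constant $c>0$ with $F(1)-F(x)\ge c(1-x)$ for all $x\in[0,1]$. Near $x=1$ this follows from $F'(1)>0$ (Assumption~\ref{assump F'(1)>0}) and continuity of $F'$ (Assumption~\ref{assump2}): there is $\delta>0$ such that $F'\ge F'(1)/2$ on $[1-\delta,1]$, so $F(1)-F(x)\ge \tfrac{F'(1)}{2}(1-x)$ there. On $[0,1-\delta]$ the continuous function $F(1)-F(x)$ is strictly positive by Assumption~\ref{assump4}, so it has a positive minimum $m$ on this compact set, and $F(1)-F(x)\ge m(1-x)$ because $1-x\le 1$. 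Take $c=\min\{F'(1)/2,m\}$. Then
\[
\varepsilon\ge R(\lambda)=\mathbb{E}_\pi[F(1)-F(\lambda(\bar q))]\ge c\,\mathbb{E}_\pi[1-\lambda(\bar q)]=c\,\pi_0 ,
\]
so $\pi_0\le \varepsilon/c$. The flow-balance identity follows from the detailed-balance relation $\pi_q\lambda(q)=\pi_{q+1}$ by summing over $q$; equivalently, it is part of the argument behind Proposition~\ref{lemma3.2} in Appendix~\ref{proof for prop 4.1}.

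\textbf{Step 2 (small $\pi_0$ forces a large queue).} Since $\lambda(k)\le 1$, the stationary weights are nonincreasing: $\pi_{q+1}=\pi_q\lambda(q)\le \pi_q$. In particular $\pi_q\le\pi_0$ for every $q$. Let $N=\lfloor 1/(2\pi_0)\rfloor$. Then $\mathbb{P}(\bar q< N)\le N\pi_0\le 1/2$, so $\mathbb{P}(\bar q\ge N)\ge 1/2$. Markov's inequality in reverse then gives
\[
\mathbb{E}[\bar q]\ge \tfrac{N}{2}\ge \tfrac{1}{4\pi_0}-\tfrac12 \ge \tfrac{c}{4\varepsilon}-\tfrac12=\Omega(1/\varepsilon).
\]
Since this holds for every admissible policy, it bounds $q^\star$ from below.

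\textbf{Main obstacle.} The delicate point is Step 1, namely the uniform linear lower bound on $F(1)-F(x)$ over all of $[0,1]$, not just near $1$. This is exactly where Assumptions~\ref{assump4} and~\ref{assump F'(1)>0} are needed: without $F'(1)>0$ the bound would only be quadratic near $1$ and the scaling would change. The remaining care is bookkeeping. Because the indexing $\pi_q=\pi_0\prod_{k=0}^{q}\lambda(k)$ in the paper's formula is off by one, the argument should work directly with the detailed-balance relation $\pi_{q+1}=\lambda(q)\pi_q$. If the chain has a finite state space $[q_0]$, the same bounds hold, since $\pi_q\le\pi_0$ forces $q_0+1\ge 1/\pi_0$.
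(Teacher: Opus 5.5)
Your proposal is correct and follows essentially the same route as the paper: the supporting line $F(x)\le F(1)-c(1-x)$ on $[0,1]$ is built exactly as in the paper's auxiliary lemma (continuity of $F'$ near $1$, compactness away from $1$), and combined with $\mathbb{E}_\pi[\lambda(\bar q)]=1-\pi_0$ it gives $\pi_0\le\varepsilon/c$, after which monotonicity $\pi_{q+1}\le\pi_q$ forces $\mathbb{E}[\bar q]=\Omega(1/\varepsilon)$. The only cosmetic difference is in the last step: the paper solves the resulting linear program explicitly (uniform mass $\varepsilon/k$ up to a threshold) and gets the constant $k/2$, while your Markov-inequality argument gives the slightly weaker constant $c/4$, which is equally sufficient.
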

Intuition behind Proposition~\ref{prop small market} is that we want to reduce the time the system is idle to improve its efficiency, and one way to achieve this is to attract more customers when the system is less busy. However, control policies with small market size are unable to do so. We leave the proof part in Appendix~\ref{proof_small_market}.

{
Note that Assumption~\ref{assump F'(1)>0} is essential for the above proposition. To see this, consider the following example. Suppose $F^\star = F(1)$ and let $\lambda = 1 - \sqrt{\varepsilon}$. When $F'(1)=0$, a second-order Taylor expansion yields
\(
F(1) - F(1-\sqrt{\varepsilon}) \sim \frac{-F''(1)}{2}\varepsilon = \Theta(\varepsilon).
\)
Thus, one can achieve $\Theta(\varepsilon)$ regret with $\lambda = 1 - \sqrt{\varepsilon}$, which in turn implies $\mathbb{E}[\bar{q}] = \Theta(1/\sqrt{\varepsilon})$.
}

{
We next turn to the large-market regime, where $\lambda_{\max} > \mu = 1$. In the next section, we show that the flexibility to set the arrival rate larger than the capacity, allows one to go beyond the classical heavy-traffic scaling of $1/\varepsilon$ by setting large arrival rates when the system is less congested. However, such a benefit can arise only when there is some reward incentive to set $\lambda > 1$. For example, if $F(x) < F(1)$ for all $x \in [1, \lambda_{\max}]$, then, optimal policies would avoid placing probability mass above the capacity $\mu = 1$, leading to behavior analogous to the small-market regime. Such a scenario can arise if either $F$ has a downward jump discontinuity at $\mu = 1$ or if $F^\prime(1)$ does not exist, instead, we have $\lim_{x \to 1^{-}} F^\prime(x) > 0$ and $\lim_{x \to 1^{+}} F^\prime(x) < 0$. These edge cases are avoided by assuming $F^\prime(1)$ exists and $F^\prime(1) \geq 0$.
}

\subsection{Large-Market Control Policies}
Now we focus on $\lambda_{\max} > 1$ for the rest of the paper. We start by analyzing the structural properties of \eqref{eq:def Fstar} in the following proposition:
\begin{proposition} \label{prop:how_many_supp_point}
If $F$ satisfies \eqref{eq:concave_like}, then, $\alpha(\{1\})=1$ is the unique optimal solution of \eqref{eq:def Fstar}. Otherwise, there exists $\opt_1 \in (1, \lambda_{\max}], \opt_2 \in [0, 1), p^\star \in (0, 1)$ such that $\alpha(\{\opt_1\})=p^\star$ and $\alpha(\{\opt_2\})=1-p^\star$ is an optimal solution of \eqref{eq:def Fstar}.
\end{proposition}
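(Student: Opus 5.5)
We reduce \eqref{eq:def Fstar} to a two-point problem via the concave envelope. Let $\hat F$ be the least concave majorant of $F$ on $[0,\lambda_{\max}]$, i.e.\ $\hat F(y)=\sup\{pF(x_1)+(1-p)F(x_2): px_1+(1-p)x_2=y,\ x_1,x_2\in[0,\lambda_{\max}],\ p\in[0,1]\}$. In one dimension, Carath\'eodory's theorem applied to the convex hull of the graph of $F$ shows that two points suffice. Since $F$ is continuous on a compact interval, this supremum is attained.

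\textbf{Step 1: $F^\star=\hat F(1)$, attained by an at-most-two-point measure.} For any feasible $\alpha$, Jensen gives $\mathbb{E}_\alpha[F(X)]\le \mathbb{E}_\alpha[\hat F(X)]\le \hat F(\mathbb{E}_\alpha X)$. Next, $\hat F$ is nondecreasing on $[0,1]$: by Assumption~\ref{assump4} and continuity, $F(1)\ge \sup_{[0,1)}F$, and concavity of $\hat F$ together with $\hat F\ge F$ prevents a decrease before $1$. Hence $\hat F(\mathbb{E}_\alpha X)\le \hat F(1)$. Conversely, the maximizing two-point mixture with mean $1$ is feasible, so $F^\star=\hat F(1)$ and an optimizer supported on at most two points exists. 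I expect the monotonicity claim to be the most delicate point and would argue it directly. If $y<1$ had $\hat F(y)>\hat F(1)$, then by concavity $\hat F$ would be strictly decreasing on $[y,\lambda_{\max}]$, so $\hat F(y)=\sup_{[0,y]}$ of the envelope. The maximizing mixture at $y$ would then use some point $x\le y<1$ with $F(x)\ge \hat F(y)>\hat F(1)\ge F^\star$-type values, contradicting $F(x)<F^\star$ once we know $F^\star\ge \hat F(1)$. A cleaner route is simply to show $F^\star=\sup_{y\le 1}\hat F(y)$, which holds by the same two-point attainment for each $y\le1$.

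\textbf{Step 2: the concave-like case.} Suppose \eqref{eq:concave_like} holds. Any optimal two-point mixture with $x_1\ne x_2$ and mean $y\le 1$ has $y=1$; otherwise some point is below $1$ and Assumption~\ref{assump4} is contradicted, once we note that the optimizer must achieve mean exactly $1$. If one support point equals $1$, then the other also equals $1$ by the mean constraint. So any nondegenerate optimal mixture has $x_1,x_2\ne 1$, and \eqref{eq:concave_like} makes it strictly worse than $\delta_1$. Thus $F^\star=F(1)$.

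For uniqueness, let $\alpha$ be any optimal measure. Equality in the Jensen chain forces $\mathbb{E}_\alpha X=1$ and $\alpha$-a.s.\ $F=\hat F$. Because \eqref{eq:concave_like} gives $\hat F(1)=F(1)>$ every chord value through $1$, $\hat F$ is strictly concave at $1$, meaning it is not affine on any interval containing $1$ in its interior. Equality in $\mathbb{E}\hat F(X)=\hat F(1)$ then forces $\alpha$ to lie in the affine piece of $\hat F$ through $1$, which is just the point $\{1\}$. Hence $\alpha=\delta_1$.

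\textbf{Step 3: the non-concave-like case.} Suppose some $x_1,x_2\ne1$ and $p\in(0,1)$ with mean $1$ satisfy $pF(x_1)+(1-p)F(x_2)\ge F(1)$. Then $\hat F(1)$ is attained by a mixture. If the maximizing pair from Step 1 is the degenerate $\delta_1$, the violating pair also attains $\hat F(1)$, so we use it. Ordering the pair, the mean-$1$ constraint forces $x_1^\star>1>x_2^\star\ge 0$ and $p^\star\in(0,1)$. Finally, $x_2^\star<1$ together with $x_1^\star\le\lambda_{\max}$ gives the required ranges for the optimal two-point measure.
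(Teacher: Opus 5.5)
Your architecture (least concave majorant $\hat F$, Jensen, two-point attainment, and a supergradient argument for uniqueness) is a legitimate alternative to the paper's use of Proposition 6.40 of \cite{moment_book}. The gap is in the step everything rests on: that $\hat F(y)\le\hat F(1)$ for $y\le 1$ (strictly for $y<1$, for uniqueness), i.e.\ that the optimum of \eqref{eq:def Fstar} sits at mean exactly $1$. Your stated reason is that Assumption~\ref{assump4} and continuity give $F(1)\ge\sup_{[0,1)}F$. That is false, because Assumption~\ref{assump4} compares $F$ on $[0,1)$ with $F^\star$, not with $F(1)$. Take $\lambda_{\max}=2$, $F(0)=0$, $F\le 1$ on $[0,1]$, $F(1/2)=1$, $F(1)=1/2$ and $F(2)=10$. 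Then $F^\star\ge\tfrac12F(0)+\tfrac12F(2)=5$, so Assumption~\ref{assump4} holds while $F(1)<\sup_{[0,1)}F$.

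Your direct argument does not repair this. First, $\hat F(y)>\hat F(1)$ forces $\hat F$ to decrease on $[1,\lambda_{\max}]$ but not on $[y,\lambda_{\max}]$. Second, to contradict $F(x)<F^\star$ you need $F(x)\ge F^\star$, an \emph{upper} bound on $F^\star$, whereas you invoke the lower bound $F^\star\ge\hat F(1)$. The ``cleaner route'' $F^\star=\sup_{y\le1}\hat F(y)$ is correct, but it only yields an optimizer with some mean $y^\ast\le 1$. As a result:
\begin{itemize}
\item Step~2's ``once we note that the optimizer must achieve mean exactly $1$'' is unproved. Also, ``some point is below $1$'' is no contradiction by itself, since the other atom may lie above $1$ with a larger value.
\item Step~3 treats the mean-$1$ pair as an optimizer, and the uniqueness step claims ``equality in the Jensen chain forces $\mathbb{E}_\alpha X=1$''; both depend on the missing step.
\end{itemize}

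The missing idea is to use Assumption~\ref{assump4} to locate maximizers rather than to compare with $F(1)$. Since $F(x)<F^\star\le\max F$ for $x<1$, every maximizer of $F$ lies in $[1,\lambda_{\max}]$. Now $\max\hat F=\max F$. If $\hat F(z)=\max\hat F$ for some $z<1$, the two-point representation of $\hat F(z)$ would have to put all its mass on maximizers of $F$, and hence have mean at least $1$, which is absurd. So all maximizers of $\hat F$ lie in $[1,\lambda_{\max}]$, and concavity makes $\hat F$ strictly increasing on $[0,1]$. This gives $F^\star=\hat F(1)$ and forces every optimal $\alpha$ to have mean exactly $1$; your Steps~2 and~3 then go through.

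For comparison, the paper argues directly on a two-atom optimizer $x_2^\star<x_1^\star$. Infeasibility excludes $x_2^\star\ge1$. Assumption~\ref{assump4} excludes $x_1^\star\le1$ and $F(x_1^\star)\le F(x_2^\star)$. Shifting mass toward $x_1^\star$ then forces mean $1$.

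Once repaired, your uniqueness argument is in fact stronger than the paper's, which only compares $\delta_1$ against two-atom competitors. Phrase it as follows. Let $\ell$ be a supergradient line of $\hat F$ at $1$; an optimal $\alpha$ is concentrated on the closed interval $\{\hat F=\ell\}$. The endpoints of that interval are contact points of $F$, again by the two-point representation. So by \eqref{eq:concave_like} the interval cannot contain points on both sides of $1$, and mean $1$ then forces $\alpha=\delta_1$.
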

We now provide the intuition for the above result and defer the proof details to Appendix~\ref{opt_sol_large_market}. Observe that \eqref{eq:def Fstar} is a class of risk averse optimization problem
that falls into the category of the problem of moments. So, by Proposition 6.40 in \cite{moment_book}, there always exists an optimal solution of \eqref{eq:def Fstar} with at most two support points as we have exactly one moment constraint. Thus, either all optimal solutions of \eqref{eq:def Fstar} are Dirac measures, or there exists an optimal solution of \eqref{eq:def Fstar} with exactly two support points. Figure~\ref{fig:2 conditions} illustrates these two possible scenarios. 
Note that, under the former, the optimal solution is necessarily $\alpha(\{1\}) = 1$ due to Assumption~\ref{assump4}. Moreover, the former holds exactly when $F(1)$ is larger than $\mathbb{E}_{\alpha}[F(X)]$ for any discrete measure $\alpha$ with two support points and $\mathbb{E}_{\alpha}[X] = 1$, i.e., \eqref{eq:concave_like} is satisfied.


\begin{figure}[htbp]
  \centering
  \begin{subfigure}[t]{0.4\textwidth}
    \centering
    \includegraphics[width=\linewidth]{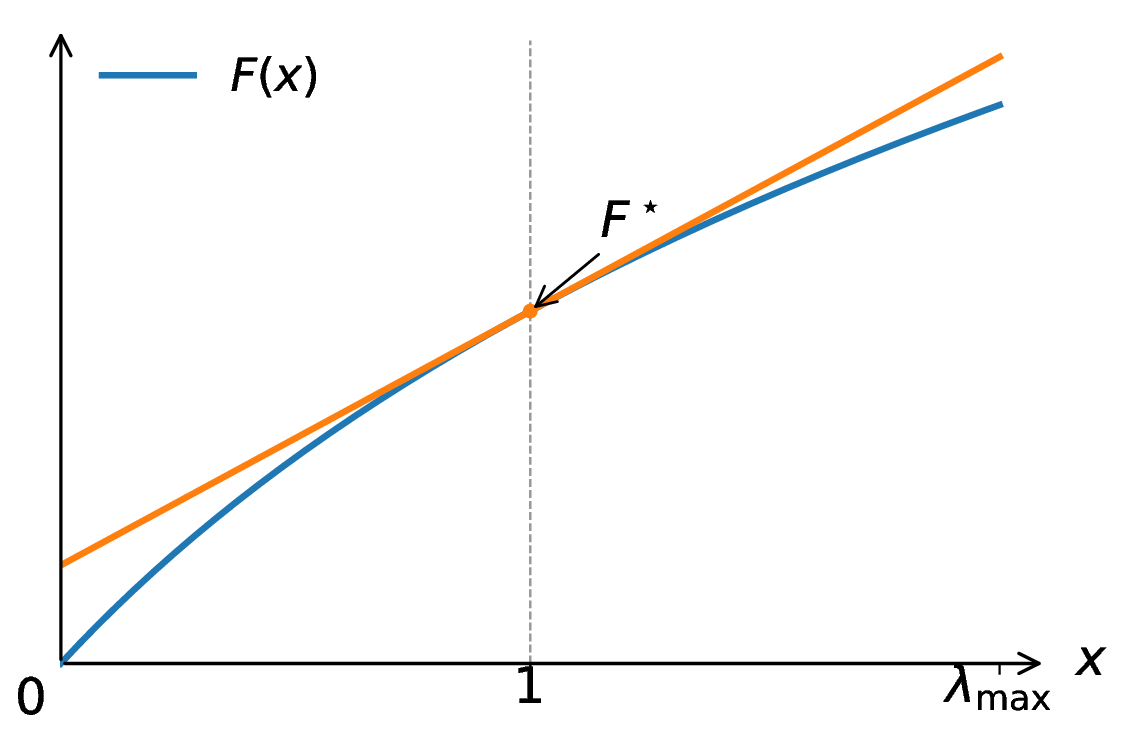}
    \subcaption{One support point ($F$ satisfies \eqref{eq:concave_like})}
    \label{fig:one point}
  \end{subfigure}
  \hspace{0.05\textwidth}
  \begin{subfigure}[t]{0.4\textwidth}
    \centering
    \includegraphics[width=\linewidth]{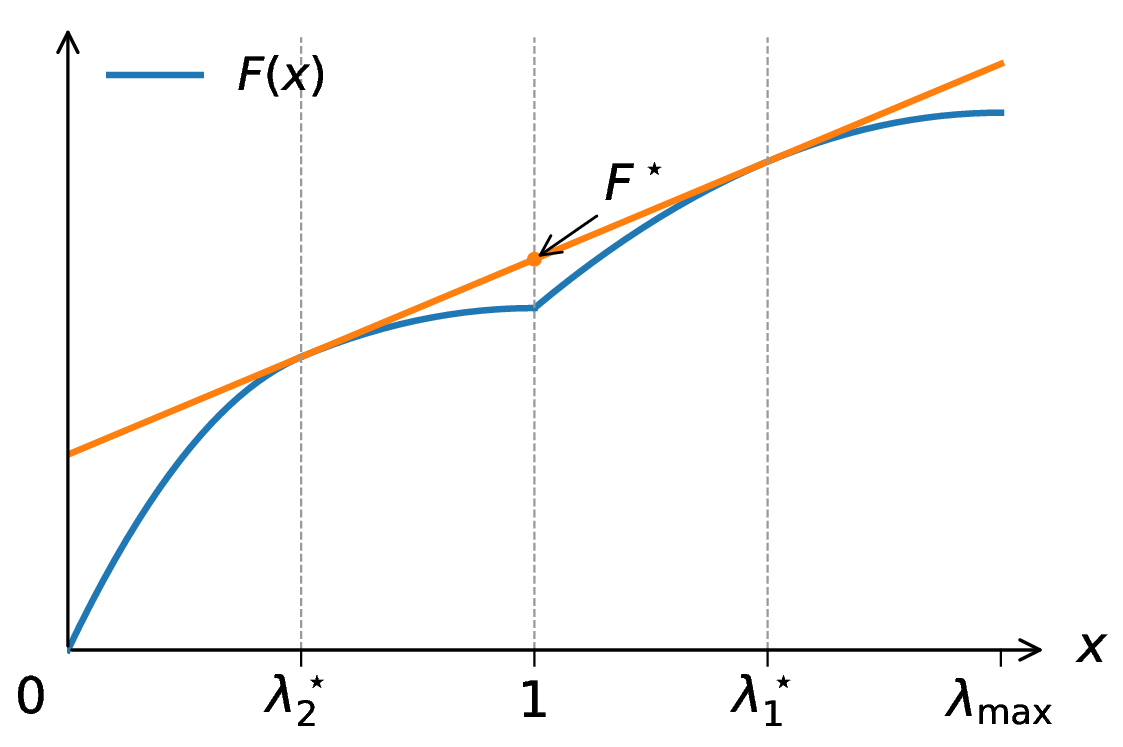}
    \subcaption{Two support point ($F$ violates \eqref{eq:concave_like})}
    \label{fig:two points}
  \end{subfigure}
  \caption{Figure for two conditions}
  \label{fig:2 conditions}
\end{figure}

It turns out that the solution of \eqref{eq:constraint} depends crucially on which case we fall under. Below we present the main results of this paper corresponding to the two cases.

\subsubsection{Deterministic Optimal Solution}\label{D_O_S}
We first consider the case when \eqref{eq:concave_like} is satisfied, i.e., the unique optimal solution of \eqref{eq:def Fstar} is equal to 1 with probability 1. To analyze this case, we need the following smoothness assumption on $F$, {which is Assumption~\ref{eq:smooth_F}}.


\begin{theorem}\label{general:1 support}
    Under Assumptions~\ref{assump2}, \ref{eq:smooth_F}, \ref{assump4} and \ref{assump F'(1)>0}, if $F$ satisfies \eqref{eq:concave_like} and $\lambda_{\max} > 1$, then, there exists $C_1, C_2, \varepsilon_0 > 0$ (depending on $F$) such that for all $\varepsilon \leq \varepsilon_0$, we have
    \begin{equation*}
        \frac{C_1}{\sqrt{\varepsilon}} \leq q^\star \leq \frac{C_2}{\sqrt{\varepsilon}}.
    \end{equation*}
If $F$ is strictly concave, then one can take $C_1 = \sqrt{-\frac{1}{8}F''(1)}, C_2 = \sqrt{-7F''(1)}$.
\end{theorem}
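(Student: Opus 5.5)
The plan is to reduce everything to a quadratic control of the regret. Under \eqref{eq:concave_like}, Proposition~\ref{prop:how_many_supp_point} gives $F^\star=F(1)$.

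\textbf{Step 1 (global quadratic bound).} I first show there is $c>0$ with
\[
F(x)\le F(1)+F'(1)(x-1)-c(x-1)^2 \quad \text{for all } x\in[0,\lambda_{\max}].
\]
Let $g(x)=\bigl(F(1)+F'(1)(x-1)-F(x)\bigr)/(x-1)^2$. Then $g(x)\to -F''(1)/2>0$ as $x\to1$, and $g$ is continuous on $[0,\lambda_{\max}]\setminus\{1\}$. So it suffices to show $g>0$ away from $1$, after which compactness gives $c$.

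Suppose instead that $F(x_1)\ge F(1)+F'(1)(x_1-1)$ for some $x_1\neq 1$. Pick $x_2=1-\eta(x_1-1)$ with $\eta$ small, on the other side of $1$. Then $F(x_2)=F(1)+F'(1)(x_2-1)+\tfrac12F''(1)(x_2-1)^2+o(\eta^2)$. Take the mixture with $px_1+(1-p)x_2=1$, so that $p=\Theta(\eta)$. Its reward is at least $F(1)-\Theta(\eta^2)\cdot(1-p)$ plus the nonnegative first-order slack. This needs care: when the slack at $x_1$ is zero, the quadratic loss is $O(\eta^2)$ while $p=\Theta(\eta)$.

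To settle this, I would instead argue via the concave envelope. By \eqref{eq:concave_like}, the concave hull $\hat F$ of $F$ satisfies $\hat F(1)=F(1)$ and is strictly above every chord through $1$. Since $F'(1)$ exists, the tangent line at $1$ is a supporting line of $\hat F$. Strictness of \eqref{eq:concave_like} then forbids any other touching point $x_1\neq1$: such a point, combined with a point near $1$ on the other side, would give a chord whose value at $1$ equals $F(1)$ up to $o(\cdot)$ terms. I expect this step to be the main technical obstacle, and I would spend most care making the envelope argument rigorous.

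\textbf{Step 2 (lower bound).} For the birth--death chain, flow balance gives $\mathbb{E}_\pi[\lambda(\bar q)]=\mathbb{P}(\bar q>0)=1-\pi_0$. Plugging Step 1 into the regret gives
\[
\varepsilon\ge R(\lambda)\ge F'(1)\pi_0+c\,\mathbb{E}_\pi[(\lambda(\bar q)-1)^2].
\]
Hence $\pi_0\le\varepsilon/F'(1)$ and $\mathbb{E}_\pi[(\lambda-1)^2]\le\varepsilon/c$.

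Since $\pi_{q+1}-\pi_q=(\lambda(q)-1)\pi_q$, Cauchy--Schwarz gives
\[
\sum_q|\pi_{q+1}-\pi_q|\le \mathbb{E}_\pi|\lambda-1|\le\sqrt{\varepsilon/c}.
\]
Therefore $\sup_q\pi_q\le\pi_0+\sqrt{\varepsilon/c}\le 2\sqrt{\varepsilon/c}=:\delta$ for small $\varepsilon$. A distribution with pointwise mass at most $\delta$ has mean at least about $1/(2\delta)$, which gives $q^\star\ge C_1/\sqrt\varepsilon$. For strictly concave $F$, one can take $c=-F''(1)/2$ by Taylor's theorem, which would be tracked to get the explicit constant.

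\textbf{Step 3 (upper bound construction).} Let $L=\kappa/\sqrt\varepsilon$ and aim for $\pi_q\propto (q+1)^2e^{-q/L}$. This corresponds to
\[
\lambda(q)=\min\Bigl\{\lambda_{\max},\bigl(\tfrac{q+2}{q+1}\bigr)^2e^{-1/L}\Bigr\},
\]
with the cap affecting only $O(1)$ initial states. These states carry mass $O(L^{-3})$, since the normalizer is about $2L^3$, and the tail is geometric, so the policy is stable. Then:
\begin{itemize}
\item $\pi_0=O(L^{-3})=O(\varepsilon^{3/2})$.
\item $\lambda(q)-1\approx\bigl(2/(q+1)-1/L\bigr)$, so $\mathbb{E}_\pi[(\lambda-1)^2]\approx L^{-2}\int_0^\infty(2/x-1)^2x^2e^{-x}\,dx/2=\Theta(L^{-2})$. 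The weight $q^2$ is exactly what makes this integral finite; a two-rate or $qe^{-q/L}$ profile would produce the $\log$ factor.
\item $\mathbb{E}_\pi|\lambda-1|^3=O(L^{-3})$.
\end{itemize}
A Taylor expansion of $F$ around $1$, using the bound $M$ on $F^{(3)}$ together with $\mathbb{E}[\lambda]-1=-\pi_0$, gives
\[
R(\lambda)\le F'(1)\pi_0+\tfrac{-F''(1)}{2}\,\Theta(L^{-2})+O(L^{-3}).
\]
This is at most $\varepsilon$ once $\kappa$ is chosen proportional to $\sqrt{-F''(1)}$. Meanwhile $\mathbb{E}[\bar q]\approx 3L=C_2/\sqrt\varepsilon$. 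The constants $1/8$ and $7$ in the strictly concave case would come from tracking these integrals explicitly.
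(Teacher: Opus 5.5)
\textbf{The gap is in Step~1.} You claim a two-sided bound $F(x)\le T(x)-c(x-1)^2$ on all of $[0,\lambda_{\max}]$, where $T(x)=F(1)+F'(1)(x-1)$. This is false under the theorem's hypotheses, so the envelope argument you plan to make rigorous cannot succeed. Your envelope argument does correctly give $F\le T$. But given that, condition \eqref{eq:concave_like} only rules out $F$ touching $T$ at points on \emph{both} sides of $1$; a touching point on one side is allowed.

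A counterexample: take $F(x)=x+\tfrac14-(x-1)^2\bigl(x-\tfrac12\bigr)^2$ on $[0,\tfrac32]$. It satisfies $F(0)=0$, $F\ge 0$, $F'(1)=1$, $F''(1)=-\tfrac12$, has bounded higher derivatives, and $F(x)\le T(x)<F(1)$ for $x<1$. For $x_1>1>x_2$ with $px_1+(1-p)x_2=1$ it gives $pF(x_1)+(1-p)F(x_2)\le F(1)-p(x_1-1)^2(x_1-\tfrac12)^2<F(1)$, so \eqref{eq:concave_like} holds. Yet $F(\tfrac12)=T(\tfrac12)$, so no $c>0$ works.

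Your own red flag was right. When the slack at $x_1$ vanishes, the mixture equals $F(1)-\Theta(\eta^2)$, which is strictly below $F(1)$ and contradicts nothing. Such functions are the paper's Cases III and IV (Appendix~\ref{concavelike_nonconcave}). The paper handles them with a majorant $G$ that is linear on the touching side, plus one-sided tangent arguments at $1+\delta$ or $1-\delta$ (Propositions~\ref{prop:strictly_concave} and~\ref{prop:strictly_concave_another_side}).

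\textbf{How to repair Step~2.} Touching on both sides contradicts \eqref{eq:concave_like}, so on at least one closed side of $1$, say $[1,\lambda_{\max}]$, you do get $F(x)\le T(x)-c(x-1)^2$; your continuity-and-compactness argument for $g$ works there. This yields $\varepsilon\ge F'(1)\pi_0+c\,\mathbb{E}_\pi[(\lambda(\bar q)-1)_+^2]$. The identity $\mathbb{E}_\pi[\lambda(\bar q)-1]=-\pi_0$ then lets the positive part control the negative part:
$\mathbb{E}_\pi|\lambda(\bar q)-1|=2\,\mathbb{E}_\pi[(\lambda(\bar q)-1)_+]+\pi_0\le 2\sqrt{\varepsilon/c}+\varepsilon/F'(1)$.
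The case where the good side is $[0,1]$ is symmetric. Your telescoping bound on $\sup_q\pi_q$ then finishes the lower bound up to constants. This one-sided linearity is exactly the obstacle the paper's $f$-divergence remark cites against a Pinsker-type proof, and the mean identity removes it.

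\textbf{The explicit $C_1$.} In the strictly concave case, $c=-F''(1)/2$ is not a valid global constant, since $|F''|$ may be smaller away from $1$. You must localize, e.g.\ split on $|\lambda-1|\le\varepsilon^{1/4}$. The paper avoids this because its tangents at $1\pm\delta$ need only local Taylor information.

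\textbf{Step~3.} This is a sound alternative to the paper's symmetric policy \eqref{symmetry_dynamic}. Your third-moment estimate should read $O(L^{-3}\log L)$ rather than $O(L^{-3})$, which is harmless. You get $\mathbb{E}[\bar q]^2R(\lambda)\approx-\tfrac92F''(1)$ versus the paper's $-6F''(1)$, so $C_2=\sqrt{-7F''(1)}$ is reached with room to spare.
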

To prove the above theorem, we establish a universal lower bound $\Omega(\frac{1}{\sqrt{\varepsilon}})$ on $\mathbb{E}[\bar{q}]$ under any feasible control policy of \eqref{eq:def Fstar}. In addition, we also construct a control policy with $\mathbb{E}[\bar{q}] \leq O(\frac{1}{\sqrt{\varepsilon}})$ concluding the proof. Note that our proposed control policy strictly improves over all small-market control policies, establishing the value of access to a large customer pool $(\lambda_{\max} > 1)$. In addition, our proposed control policy has an optimal dependence in terms of $\varepsilon$, thus, optimizing the trade-off between regret and queue length. We refer the reader to Section~\ref{subsection:1pointanalysis} for a detailed proof of the above theorem. 
In addition, when \(F\) is strictly concave, the lower and upper bounds differ only by a universal constant term, indicating our proposed analytical policy performs well.

To ensure low regret, we would like our control policy to be close to the optimal solution of \eqref{eq:def Fstar}, i.e., $\lambda(q) \approx 1$ for all $q \in \mathcal{S}$. For instance, if we consider $\lambda \equiv 1$, then, we incur zero regret, however, the queueing system is unstable (null recurrent). Thus, we need to necessarily perturb $\lambda(q)$ around $1$ to ensure stability and finite queue lengths. On these lines, \cite{kim2018value} proposed a ``two-price policy'', when interpreted in our context
, corresponds to $\lambda(q) = 1 + k_1$ for $q \leq \tau$ and $\lambda(q) = 1 - k_2$ for $q > \tau$ with appropriately chosen $k_1, k_2, \tau$ as a function of $\varepsilon$. In Section~\ref{sec:additional_log}, we show that any such policy leads to $\mathbb{E}[\bar{q}] \geq \Omega(\sqrt{\frac{\log 1/\varepsilon}{\varepsilon}})$, incurring an additional $\sqrt{\log\frac{1}{\varepsilon}}$ factor compared to the universal lower bound of $\Omega(\frac{1}{\sqrt{\varepsilon}})$. This is mainly because of the non-smoothness of this policy as it abruptly jumps at $q = \tau$. We improve these results by proposing a fully dynamic policy that changes $\lambda(q)$ gradually as a function of $q$. In particular, let $B = \frac{C}{\sqrt{\varepsilon}}$ for some $C>0$, and consider the following control policy
\begin{equation}\label{symmetry_dynamic}
\lambda(q) = 
\begin{cases}
    \frac{(m+q+2)^2}{(m+q+1)^2}, &\quad q = 0,1,\cdots, B-1\\
    \frac{(m+2B-q)^2}{(m+2B-q+1)^2}, &\quad q = B,B+1,\cdots, 2B-1\\
    0, &\quad q\geq 2B,
\end{cases}
\end{equation}
as depicted in Figure~\ref{fig:lambda-comparison}, 
where
\(
m=\left\lceil
\frac{1}{\sqrt{\overline{\lambda}_{\max}}-1}
\right\rceil-1,
\)
for some design parameter
\(
\bar\lambda_{\max}
\in
[1+\varepsilon^{\alpha},\,\lambda_{\max}],
\)
where \(\alpha \in (0, 1/4)\) is a fixed constant.
This choice guarantees that
\(
\lambda(q)\le
\bar\lambda_{\max}
\le
\lambda_{\max},
\)
for all \(q\ge 0\).

We show that this policy achieves $\mathbb{E}[\bar q] \leq O(\frac{1}{\sqrt{\varepsilon}})$, improving upon the previously proposed ``two-price policy''. Unlike the two-price policy, this policy is \textit{not} a small perturbation around $1$ as $\lambda(q) \gg 1$ for small $q$ and $\lambda(q) \ll 1$ for large $q$. Such a design provides a large positive drift for small $q$ which reduces the idle time, and a large negative drift for large $q$ which ensures low queue lengths. Moreover, the steady-state distribution concentrates around $B = \frac{C}{\sqrt{\varepsilon}}$, where $\lambda(q)$ is a small perturbation around $1$, ensuring low regret.
\begin{figure}[htbp]
  \centering
  \begin{subfigure}[t]{0.4\textwidth}
    \centering
    \includegraphics[width=\linewidth]{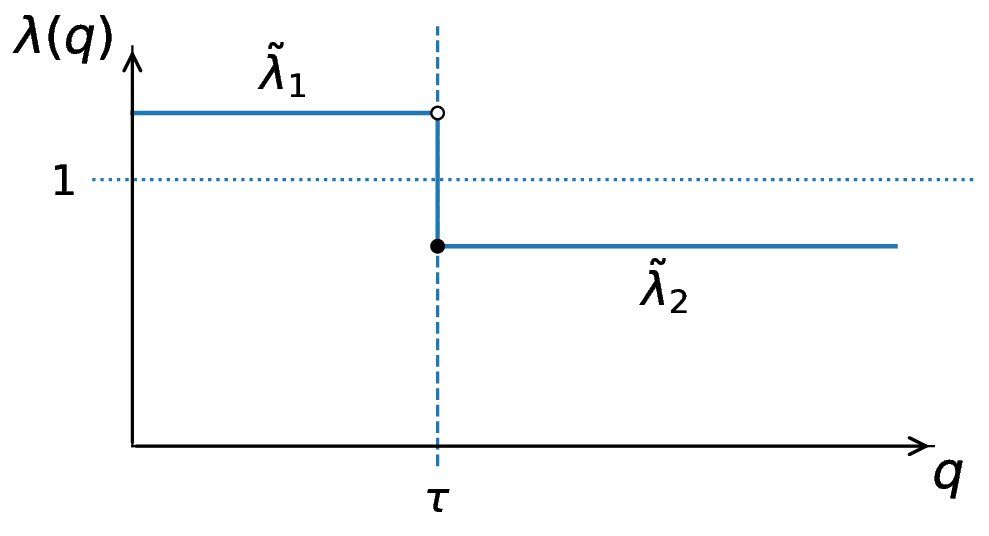}
    \subcaption{Two-arrival rate policy}
    \label{fig:lambda-static}
  \end{subfigure}
  \hspace{0.05\textwidth}
  \begin{subfigure}[t]{0.4\textwidth}
    \centering
    \includegraphics[width=\linewidth]{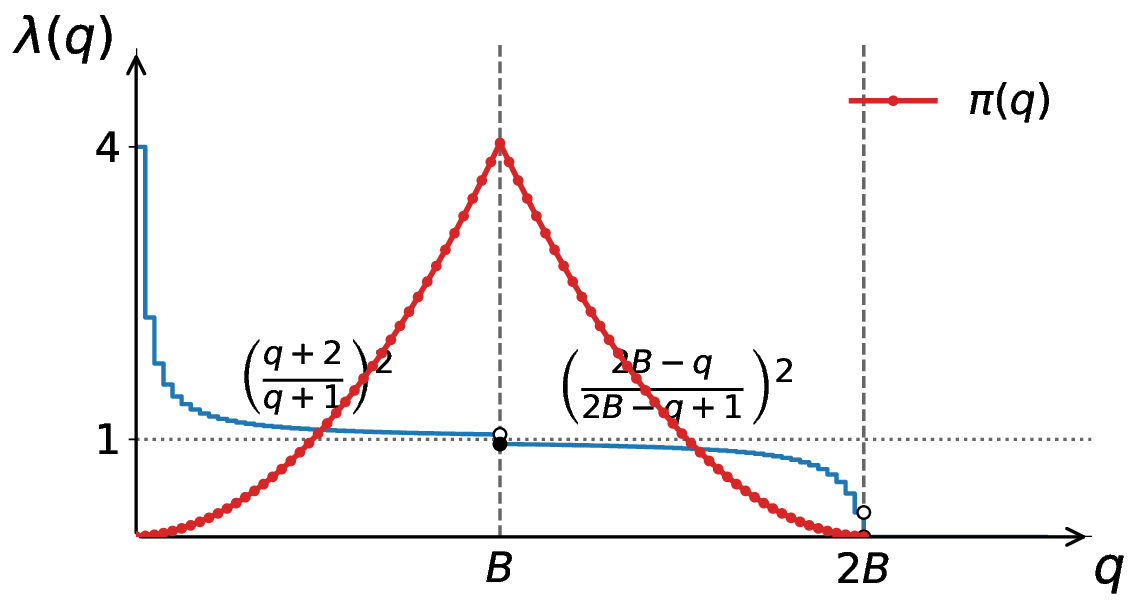}
    \subcaption{Fully dynamic policy with stationary distribution}
    \label{fig:lambda-dynamic}
  \end{subfigure}
  \caption{Comparison of arrival policies.}
  \label{fig:lambda-comparison}
\end{figure}


\subsubsection{Probabilistic Optimal Solution} \label{P_O_S}
We now consider the case when \eqref{eq:concave_like} is not satisfied, i.e., \eqref{eq:def Fstar} has an optimal solution with exactly two support points. For this case, we show that one can achieve a $O(\log\frac{1}{\varepsilon})$ expected queue length as stated in the following theorem:
\begin{theorem}\label{general:2 supports}
    Under Assumptions~\ref{assump2} and \ref{assump4}, if $F$ does not satisfy \eqref{eq:concave_like}, and $\lambda_{\max} > 1$, then,
    \begin{equation*}
\log_{\lambda_{\max}} \frac{1}{\varepsilon} \leq q^\star \leq \log_{\opt_1} \frac{1}{\varepsilon},
    \end{equation*}
    where $\opt_1$ is defined in Proposition~\ref{prop:how_many_supp_point}. If \(F\) is linear, then one can take $x_1^\star = \lambda_{\max}$.
\end{theorem}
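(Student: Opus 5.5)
We prove the two inequalities separately. The upper bound comes from an explicit two-level threshold policy. The lower bound follows from a pointwise bound on the stationary probability of the empty state, combined with a regret inequality that controls idleness.

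\textbf{Upper bound.} By Proposition~\ref{prop:how_many_supp_point}, there exist $\opt_1\in(1,\lambda_{\max}]$, $\opt_2\in[0,1)$ and $p^\star\in(0,1)$ with $p^\star\opt_1+(1-p^\star)\opt_2=1$ and $p^\star F(\opt_1)+(1-p^\star)F(\opt_2)=F^\star$. Consider the threshold policy
\begin{equation*}
\lambda(q)=\opt_1 \text{ for } q<\tau, \qquad \lambda(q)=\opt_2 \text{ for } q\ge \tau,
\end{equation*}
with $\tau\approx\log_{\opt_1}(1/\varepsilon)$.

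If $\opt_2=0$, the chain lives on $[\tau]$, and $\pi_q\propto(\opt_1)^q$ is a truncated geometric distribution. Then $\pi_\tau\approx 1-1/\opt_1$ and $\pi_0=O((\opt_1)^{-\tau})$, and $\mathbb{E}[\bar q]\le\tau$. The pushforward measure $\alpha$ puts mass $\pi_\tau$ on $\opt_2$ and the rest on $\opt_1$. Its mean is the throughput $\mathbb{E}_\pi[\lambda(\bar q)]=1-\pi_0$, so $\alpha$ is a two-point law on $\{\opt_1,\opt_2\}$ with mean $1-\pi_0$. Because the reward is affine in the mixing weight, its value is $F^\star-\pi_0\,\frac{F(\opt_1)-F(\opt_2)}{\opt_1-\opt_2}=F^\star-O(\pi_0)$. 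Choosing $\tau$ so that $\pi_0\le c\varepsilon$ gives regret at most $\varepsilon$ with $\mathbb{E}[\bar q]\le\tau$. The same affine-in-weight argument handles $\opt_2>0$, adding only $O(1)$ from the geometric tail above $\tau$.

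To get exactly $\log_{\opt_1}(1/\varepsilon)$, the constants must be absorbed. I would do this by noting that the slope $\frac{F(\opt_1)-F(\opt_2)}{\opt_1-\opt_2}$ is a supporting-line slope of the concave hull. I would also tune $\tau$, or randomize the threshold level, and would treat the exact constant as minor bookkeeping.

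\textbf{Lower bound.} For any stable policy, let $\ell$ be the supporting line of the concave hull of $F$ at $1$: $\ell(x)=F^\star+s(x-1)$, with $F\le \ell$ on $[0,\lambda_{\max}]$ and $s>0$ under Assumption~\ref{assump4}. Flow balance gives $\mathbb{E}_\pi[\lambda(\bar q)]=1-\pi_0$, so
\begin{equation*}
R(\lambda)\ge F^\star-\mathbb{E}_\pi[\ell(\lambda(\bar q))]=s\,\pi_0 .
\end{equation*}
Hence $\pi_0\le\varepsilon/s$.

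Next, $\pi_q=\pi_0\prod_{k<q}\lambda(k)\le\pi_0\lambda_{\max}^{q}$. Therefore the total mass on $\{0,\dots,n\}$ is at most $\pi_0\lambda_{\max}^{n+1}/(\lambda_{\max}-1)$. This forces the mass to sit at heights $q\gtrsim\log_{\lambda_{\max}}(1/\pi_0)\ge\log_{\lambda_{\max}}(1/\varepsilon)-O(1)$. Summing the tail, $\mathbb{E}[\bar q]=\sum_n\mathbb{P}(\bar q>n)$ gives $\mathbb{E}[\bar q]\ge\log_{\lambda_{\max}}(1/\varepsilon)-O(1)$.

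\textbf{Main obstacle.} The difficulty is obtaining the clean constants stated, with no additive $O(1)$ terms. For the upper bound I would choose $\tau$ carefully. For the lower bound I would bound $\mathbb{E}[\bar q]$ directly by an extremal argument: among all sequences with $\lambda(k)\le\lambda_{\max}$ and a given $\pi_0$, the mean is minimized by the truncated-geometric policy. I expect the slope $s$ and $\lambda_{\max}/(\lambda_{\max}-1)$ factors to be absorbed for small $\varepsilon$, if needed by slightly weakening to asymptotic inequalities.

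In the linear case $F(x)=x$, take $\opt_1=\lambda_{\max}$ and $\opt_2=0$, which is optimal for \eqref{eq:def Fstar}.
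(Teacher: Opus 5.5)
Your proposal is correct and follows essentially the paper's proof. The upper bound uses the same two-level threshold policy at $x_1^\star, x_2^\star$; your exact identity $R(\lambda)=s\,\pi_0$, which holds because the pushforward is a two-point law on $\{x_1^\star,x_2^\star\}$ with mean $1-\pi_0$, is a cleaner substitute for the paper's explicit lower bound on $\sum_{i\le\tau}\pi_i$. The lower bound uses the same supporting-line estimate $\pi_0\le\varepsilon/s$, combined with $\pi_{i+1}\le\lambda_{\max}\pi_i$ and the truncated-geometric extremal distribution.

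One correction to your ``main obstacle'' paragraph. The additive constants depend on the slope $s$, and hence on the scale of $F$, so they cannot be absorbed by tuning or randomizing $\tau$: for $F(x)=Kx$, the exact upper bound fails when $K$ is large and the exact lower bound fails when $K$ is small. The leading-order reading with $(1+o(1))$ factors is therefore the only valid one, and it is precisely what the paper's own proof establishes.
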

To prove above theorem, we establish a universal lower bound $\Omega(\log \frac{1}{\varepsilon})$ on $\mathbb{E}[\bar{q}]$ under any feasible control policy of \eqref{eq:def Fstar} and also present a feasible control policy that achieves $\mathbb{E}[\bar{q}] = O(\log \frac{1}{\varepsilon})$. Thus, our proposed control policy has an optimal dependence on $\varepsilon$. Moreover, in this setting, we exponentially improve the $\mathbb{E}[\bar{q}]$ compared to the previous sections. We defer the details of the proof to Section~\ref{subsection:2pointanalysis} and focus on explaining the intuition here.

Similar to the previous section, we would like to construct a control policy that is close to the optimal solution of \eqref{eq:def Fstar} and also ensures low queue lengths. In this case, the optimal solution of \eqref{eq:def Fstar} is a discrete measure $\alpha^\star$ with two support points $\opt_2 < 1 < \opt_1$ and $\alpha^\star(\{\opt_1\}) = p^\star$. One could then set $\lambda(q) = \opt_1$ w.p. $p^\star$ and $\opt_2$ otherwise for all $q \in \mathcal{S}$. However, this would lead to an effective arrival rate of $p^\star \opt_1 + (1-p^\star)\opt_2 = 1$ which results in an unstable system. Our novel idea is then to consider the following control policy:
\begin{equation} \label{eq:control_policy_two_supp_point}
    \lambda(q) = \begin{cases}
        \opt_1 &\textit{if } q \leq \tau, \\
        \opt_2 &\textit{if } q > \tau,
    \end{cases}
\end{equation}
for an appropriately defined $\tau$ such that $\sum_{q=0}^\tau \pi_q \approx p^\star$. While $\lambda(q)$ is far away from the optimal solution of \eqref{eq:def Fstar} for all $q \in \mathcal{S}$, by appropriately picking $\tau$, we ensure that the pushforward of $\pi$ by $\lambda$ is approximately $\alpha^\star$, which then implies $\mathbb{E}_\pi [F(\lambda(\bar{q}))] \approx F^\star$. Moreover, as $\opt_2 < 1$, the above control policy imposes a negative drift, ensuring low queue lengths. As we do not perturb the support points $(\opt_1, \opt_2)$ of the optimal policy while designing $\lambda(q)$, we are able to ensure exponentially lower queue lengths in this case as compared to Theorem~\ref{general:1 support}.

The work by \cite{tsitsiklis2013power} aims to minimize the expected queue length in a multi-server resource pooling model while keeping the throughput close to maximum capacity, which is similar to Theorem~\ref{general:2 supports} with a linear $F$. Our analysis extends their result to the broader class of reward functions $F$ that are not concave-like, establishing analogous order results under more general structural conditions.

\subsubsection{Connections to the Locally Polyhedral Condition \cite{huang2009delay}} \label{polyhedral}
In our setting, the fluid benchmark~\eqref{eq:def Fstar} admits a natural Lagrange dual formulation given by
\begin{equation}\label{dual}
    \max\quad   q(U)
        \text{ s.t. }\quad   U \geq 0, \text{ with } q(U) = \inf_{\alpha\in \mathcal{P}([0,\lambda_{\max}])} \{\mathbb{E}_\alpha[-F(X)+UX]-U\}.
\end{equation}
We show that $F$ violating \eqref{eq:concave_like} is equivalent to the dual function $q$ being locally polyhedral \cite{huang2009delay} around its optimum $U^\star$.  
\begin{theorem}\label{relative of 18eq}
    Let $U^\star$ be the optimal solution of \eqref{dual} and assume that $F$ does not satisfy \eqref{eq:concave_like}, then, there exists $L > 0$ such that
\begin{equation}\label{paper eq18}
        q(U^\star) \geq q(U) +L\lvert U^\star-U\rvert
    \end{equation}
    for all $U\geq 0$, where $q(U^\star) = \max\limits_{U\geq 0} q(U) = -F^\star$.
\end{theorem}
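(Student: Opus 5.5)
The plan is to compute the dual function explicitly, identify $U^\star$ as the slope of the chord through the two support points from Proposition~\ref{prop:how_many_supp_point}, and then derive \eqref{paper eq18} from two one-sided linear bounds. The infimum over $\alpha$ in \eqref{dual} is attained by Dirac measures, so
\[
q(U) = -g(U) - U, \qquad g(U) := \max_{x\in[0,\lambda_{\max}]}\bigl(F(x)-Ux\bigr).
\]
The maximum exists because $F$ is continuous (Assumption~\ref{assump2}).

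\emph{Weak duality.} Let $\alpha^\star$ be the optimal two-point measure from Proposition~\ref{prop:how_many_supp_point}, with atoms $\opt_2<1<\opt_1$ and $\alpha^\star(\{\opt_1\})=p^\star$. For every $U\ge 0$,
\[
q(U)\le \mathbb{E}_{\alpha^\star}[-F(X)]+U\bigl(\mathbb{E}_{\alpha^\star}[X]-1\bigr)\le -F^\star .
\]
So it suffices to exhibit some $U^\star\ge 0$ with $q(U^\star)=-F^\star$ that also satisfies the linear bound. Such a $U^\star$ is then automatically a maximizer of \eqref{dual}, and the strict inequality in \eqref{paper eq18} shows it is the unique one.

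\emph{Construction of $U^\star$.} Let $U^\star$ be the slope of the chord $\ell$ through $(\opt_2,F(\opt_2))$ and $(\opt_1,F(\opt_1))$.
\begin{itemize}
\item Since $\mathbb{E}_{\alpha^\star}[X]=1$ (this follows from Assumption~\ref{assump4}; if the mean were below $1$, one could shift weight and contradict optimality), we get $\ell(1)=p^\star F(\opt_1)+(1-p^\star)F(\opt_2)=F^\star$.
\item Assumption~\ref{assump4} gives $F(\opt_2)<F^\star$. Since $F^\star$ is a strict convex combination, this forces $F(\opt_1)>F^\star>F(\opt_2)$, hence $U^\star>0$.
\end{itemize}

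\emph{The chord dominates $F$; this is the step I expect to be the crux.} Suppose some $x$ had $F(x)>\ell(x)$.
\begin{itemize}
\item If $x<1$, mix $x$ with $\opt_1$ so that the mean is $1$. The value of this mixture is strictly above $\ell(1)=F^\star$, because $\ell$ is affine and $F\ge \ell$ at both points with strict inequality at $x$. This contradicts the optimality of $F^\star$.
\item If $x>1$, mix $x$ with $\opt_2$ instead, with the same contradiction.
\item If $x=1$, the Dirac measure at $1$ is feasible, so $F(1)\le F^\star=\ell(1)$ directly.
\end{itemize}
Hence $F\le \ell$ on $[0,\lambda_{\max}]$, with equality at $\opt_1$ and $\opt_2$. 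Therefore
\[
g(U^\star)=F(\opt_i)-U^\star\opt_i \quad (i=1,2),
\]
and $g(U^\star)+U^\star=\ell(1)=F^\star$, i.e. $q(U^\star)=-F^\star$.

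\emph{The linear bound.} For any $U\ge0$ and $i\in\{1,2\}$,
\[
g(U)\ge F(\opt_i)-U\opt_i = g(U^\star)-(U-U^\star)\opt_i .
\]
Substituting into $q(U)=-g(U)-U$ gives
\[
q(U)\le q(U^\star)+(U-U^\star)(\opt_i-1).
\]
\begin{itemize}
\item For $U\ge U^\star$, take $i=2$: $q(U)\le q(U^\star)-(1-\opt_2)|U-U^\star|$.
\item For $U<U^\star$, take $i=1$: $q(U)\le q(U^\star)-(\opt_1-1)|U-U^\star|$.
\end{itemize}
Thus \eqref{paper eq18} holds with $L=\min\{\opt_1-1,\,1-\opt_2\}>0$.
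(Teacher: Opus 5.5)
Your proposal is correct and follows essentially the same route as the paper: take $U^\star$ to be the slope of the chord through $(x_2^\star,F(x_2^\star))$ and $(x_1^\star,F(x_1^\star))$, use that the chord dominates $F$ (the paper's Lemma~\ref{lemma4.3}, proved by the same mixing argument) together with weak duality to get $q(U^\star)=-F^\star$, and then bound the infimum by its value at $x_1^\star$ for $U<U^\star$ and at $x_2^\star$ for $U>U^\star$, which gives $L=\min\{x_1^\star-1,\,1-x_2^\star\}$. Your explicit checks that $U^\star>0$ (so $U^\star$ is dual-feasible) and that the strict bound makes $U^\star$ the unique dual maximizer are small additions that the paper leaves implicit.
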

The locally polyhedral condition was first introduced in \cite{huang2009delay}, in the context of stochastic network optimization. We refer the reader to Appendix~\ref{Polyhedral} for the proof of the above theorem and the comparison with \cite{huang2009delay}.

\section{When \(F\) is concave-like}\label{subsection:1pointanalysis}

\subsection{Proof of the upper bound of $q^\star$ in Theorem~\ref{general:1 support}}\label{sec:qstar_upper_bound}

Recall the following fully dynamic policy introduced in \eqref{symmetry_dynamic}
and let $B=\left\lceil\frac{\sqrt{-7F''(1)}}{\sqrt{\varepsilon}}\right\rceil$.
We now show that $\mathbb{E}[\bar q] = B$, and $R(\lambda) = F(1)-\mathbb{E}_\pi[F(\lambda(\bar q))] \leq \varepsilon$. We start by establishing the former.
Using the detailed balance equations, we get the following stationary distribution: 
\begin{align}
\pi_0 &= \frac{3(m+1)^2}{3(2B+1)m^2 + 3(B+1)^2(2m+1)+B(B+1)(2B+1)}, \label{plug_pi_0}
\\
\text{ and }\pi_i&=\begin{cases}
    \pi_0\left(\frac{m+1+i}{m+1}\right)^2, \quad i=0,1,\cdots,B\\
    \pi_0\left(\frac{m+2B+1-i}{m+1}\right)^2, i = B+1,B+2,\cdots, 2B
\end{cases} \notag
\end{align}
Therefore
\begin{align*}
    \mathbb{E}[\bar q] &= \sum_{i=0}^{2B} i\pi_i = \sum_{i=0}^{B-1}i\pi_i + B\pi_B + \sum_{i=B+1}^{2B}i\pi_i
    = \sum_{i=0}^{B-1}i\pi_i + B\pi_B + \sum_{i=0}^{B-1}(2B-i)\pi_{2B-i}\\
    &\overset{(a)}= \sum_{i=0}^{B-1}i\pi_i + B\pi_B + \sum_{i=0}^{B-1}(2B-i)\pi_i 
    = 2\sum_{i=0}^{B-1}B\pi_i+B\pi_B
    \overset{(b)}= B\sum_{i=0}^{B-1} \pi_i+B\sum_{i=B+1}^{2B}\pi_i + B\pi_B \\
    &= B\sum_{i=0}^{2B}\pi_i = B \leq \frac{\sqrt{-7F''(1)}}{\sqrt{\varepsilon}} + 1,
\end{align*}
where (a), (b) is by $\pi_i=\pi_{2B-i}$ for all $i$. Next, we will show $R(\lambda) = F(1)-\mathbb{E}_\pi[F(\lambda(\bar q))] \leq \varepsilon$, with detailed mathematical proof in Appendix~\ref{longest_proof}. 

Therefore, the fully dynamic policy we provided satisfies the regret constraint \eqref{eq:constraint}, hence $q^\star = B \leq \frac{\sqrt{-7F''(1)}}{\sqrt{\varepsilon}} + o(\frac{1}{\sqrt{\varepsilon}})$.


\subsection{Proof of the lower bound of $q^\star$ in Theorem~\ref{general:1 support}}\label{sec:qstar_lower_bound}

We present the proof of the lower bound in Theorem~\ref{general:1 support} for the special case in which \(F\) is strictly concave, stated in Proposition~\ref{prop:strictly_concave}. The extension to the general concave-like \(F\) is provided in Appendix~\ref{concavelike_nonconcave}. To facilitate this extension, we do not require $F''(1)$ to  exist.


\begin{proposition} \label{prop:strictly_concave}
    Let $F: [0, \lambda_{\max}] \rightarrow \mathbb{R}_+$ be a continuously differentiable strictly concave function. In addition, $F$ is thrice continuously differentiable for $x \in (1, \lambda_{\max}]$ with $\lim\limits_{x \rightarrow 1^+} F''(x) < 0$ and $\sup\limits_{x > 1}|F^{(3)}(x)| \leq M$, 
    then, there exists $C_1, \varepsilon_0 > 0$ (depending on $F$), such that $q^\star \geq C_1/\sqrt{\varepsilon}$ for all $\varepsilon\leq \varepsilon_0$.
\end{proposition}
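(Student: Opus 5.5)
The plan is to turn the regret constraint into a pointwise upper bound $\max_q \pi_q = O(\sqrt{\varepsilon})$ on the stationary distribution. Such a bound forces the mass to spread over $\Omega(1/\sqrt{\varepsilon})$ states, which gives $\mathbb{E}[\bar q]=\Omega(1/\sqrt{\varepsilon})$.

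\textbf{Step 1: split the regret.} Since $F$ is strictly concave it satisfies \eqref{eq:concave_like}, so by Proposition~\ref{prop:how_many_supp_point} we have $F^\star=F(1)$. Define
\[
D(x):=F(1)+F'(1)(x-1)-F(x)\ \ge 0,
\]
which is convex with $D(1)=0$. Then
\[
R(\lambda)=F'(1)\bigl(1-\mathbb{E}_\pi[\lambda(\bar q)]\bigr)+\mathbb{E}_\pi[D(\lambda(\bar q))].
\]
By flow balance (as in the proof of Proposition~\ref{lemma3.2}), $\mathbb{E}_\pi[\lambda(\bar q)]=1-\pi_0$. Hence $R(\lambda)\le\varepsilon$ gives both
\[
\pi_0\le \varepsilon/F'(1) \quad\text{and}\quad \sum_q \pi_q D(\lambda(q))\le \varepsilon.
\]
Here $F'(1)>0$ by Assumption~\ref{assump F'(1)>0}.

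\textbf{Step 2: lower-bound $D$ to the right of $1$.} Only $x>1$ is needed, which is why $F''$ is only assumed to exist on $(1,\lambda_{\max}]$. Write $c_0:=-\lim_{x\to1^+}F''(x)>0$. A second-order Taylor expansion on $(1,1+\delta]$ with remainder controlled by $M$ gives some $\delta>0$ with
\[
D(x)\ge \tfrac{c_0}{4}(x-1)^2 \quad\text{for } x\in[1,1+\delta].
\]
Convexity of $D$ on $[1,\lambda_{\max}]$, together with $D(1)=0$, then gives the linear bound
\[
D(x)\ge \tfrac{D(1+\delta)}{\delta}(x-1)\ge \tfrac{c_0\delta}{4}(x-1) \quad\text{for } x\ge 1+\delta.
\]
Making this uniform (choosing $\delta$ small enough that the Taylor remainder is dominated) is where I expect the main technical care to lie.

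\textbf{Step 3: pointwise bound via the $f$-divergence viewpoint.} Detailed balance gives $\pi_{q+1}=\lambda(q)\pi_q$, so $\pi_{q+1}-\pi_q=\pi_q(\lambda(q)-1)$. For any state $q^*$, only the upward steps can increase $\pi$, so
\[
\pi_{q^*}\le \pi_0+\sum_{q}\pi_q(\lambda(q)-1)^+ .
\]
I split this sum according to whether $\lambda(q)\le 1+\delta$ or $\lambda(q)>1+\delta$.
\begin{itemize}
\item On the first part, Cauchy--Schwarz gives
\[
\sum\pi_q(\lambda-1)\le\Bigl(\sum\pi_q\Bigr)^{1/2}\Bigl(\sum\pi_q(\lambda-1)^2\Bigr)^{1/2}\le \sqrt{4\varepsilon/c_0}.
\]
\item On the second part, the linear bound from Step 2 gives $\sum\pi_q(\lambda-1)\le 4\varepsilon/(c_0\delta)$.
\end{itemize}
Combining these with $\pi_0\le\varepsilon/F'(1)$ yields $\max_q\pi_q\le C\sqrt{\varepsilon}$ for all $\varepsilon\le\varepsilon_0$, with $C\approx 2/\sqrt{c_0}$.

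\textbf{Step 4: conclude.} Set $N=\lfloor 1/(2C\sqrt{\varepsilon})\rfloor$. Then $\mathbb{P}(\bar q< N)\le NC\sqrt{\varepsilon}\le 1/2$, so
\[
\mathbb{E}[\bar q]\ge N\,\mathbb{P}(\bar q\ge N)\ge N/2\ge C_1/\sqrt{\varepsilon}.
\]
Tracking the constants should give $C_1$ of order $\sqrt{c_0}$, consistent with the $\sqrt{-F''(1)/8}$ stated in Theorem~\ref{general:1 support}.
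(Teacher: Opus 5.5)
Your proof is correct. It has the same overall structure as the paper's: first a pointwise bound $\max_q \pi_q = O(\sqrt{\varepsilon})$, then a mass-spreading step. The key inequality, however, is genuinely different.

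\textbf{The paper's route.} It fixes $j$ and bounds $F(\lambda(i))$ by the tangent at $c=1+\delta$ for $i\le j$ and by the tangent at $1$ for $i>j$, then telescopes. In your notation, this is the linear minorant $D(x)\ge a(x-1)-b$, with
\[
a=F'(1)-F'(1+\delta)\approx c_0\delta, \qquad b=F(1+\delta)-F(1)-\delta F'(1+\delta)\approx c_0\delta^2/2,
\]
applied to the signed sum $\sum_{i\le j}\pi_i(\lambda(i)-1)=\pi_{j+1}-\pi_0$. The paper then chooses $\delta=\sqrt{2\varepsilon/c_0}$. Because $\delta$ shrinks with $\varepsilon$, a third-order Taylor expansion with the constant $M$ is needed to control the error terms.

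\textbf{Your route.} You separate the idle-time term $F'(1)\pi_0$ from the Bregman-type term $\mathbb{E}_\pi[D(\lambda(\bar q))]$. You then control only the upward increments: by Cauchy--Schwarz where $D$ is locally quadratic, and by convexity-based linear growth beyond a \emph{fixed} $\delta$.

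\textbf{What your route buys.} The argument is cleaner and does not actually need $M$. The mere existence of $\lim_{x\to1^+}F''(x)=-c_0<0$ gives $F''\le -c_0/2$ on some $(1,1+\delta]$. The mean value theorem then yields $D(x)\ge \tfrac{c_0}{4}(x-1)^2$ there. Your route also turns the paper's $f$-divergence/Pinsker remark into a rigorous proof without requiring global strong convexity.

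\textbf{What it costs.} Your pointwise bound $2\sqrt{\varepsilon/c_0}$ is a factor $\sqrt2$ worse than the paper's $\sqrt{2\varepsilon/c_0}$. Your Markov-type Step 4 loses another factor of $2$ relative to the paper's rearrangement (uniform-mass) bound. So you obtain $C_1\approx\sqrt{c_0}/8$, not the $\sqrt{c_0/8}$ quoted in Theorem~\ref{general:1 support}; your remark that the constants are "consistent" holds only in order. This is harmless here, since the proposition only asserts that some $C_1$ exists.

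If you want the paper's constant, three changes suffice. Use the sharp local bound $D(x)\ge(\tfrac{c_0}{2}-o(1))(x-1)^2$ on a neighbourhood of width $\delta=\varepsilon^{1/4}$; the far part then contributes only $O(\varepsilon^{3/4})$. Replace Step 4 by the rearrangement argument. Together these give $\max_q\pi_q\le\sqrt{2\varepsilon/c_0}\,(1+o(1))$ and $\mathbb{E}[\bar q]\ge\sqrt{c_0/(8\varepsilon)}\,(1+o(1))$.
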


This proof is one of the main contributions of the paper, as it extends the lower bound in \cite{kim2018value} from a restricted class of policies to arbitrary control policies. 


\begin{proof}[Proof of Proposition~\ref{prop:strictly_concave}]\label{proof_of_1/sqrt(eps)}
Fix a control policy $\lambda$ and let $\{\pi_i\}_{i=0}^\infty$ be the stationary distribution with $\pi_i = 0$ for all $i \notin \mathcal{S}$ as $\mathcal{S}$ is the state space. For a given $j \in \mathcal{S}$, we start by upper bounding $\pi_j$. Since $F$ is concave, for any $c\in [0,\lambda_{\max}]$, we have:
\begin{align*}
    F(1)-\varepsilon \leq{}& \mathbb{E}_\pi[F(\lambda(\bar q))] = \sum_{i \in \mathcal{S}} F(\lambda(i))\pi_i = \sum_{i \in \mathcal{S}} F\left(\frac{\pi_{i+1}}{\pi_i}\right)\pi_i\\
    \overset{(a)}{\leq}{}& \sum_{i=0}^j \left(F(c)+F'(c)\left(\frac{\pi_{i+1}}{\pi_i}-c\right)\right)\pi_i + \sum_{i \in \mathcal{S} \backslash [j]} \left(F(1)+F'(1)\left(\frac{\pi_{i+1}}{\pi_i}-1\right)\right)\pi_i\\
    ={}& \sum_{i=0}^j\left(\left(F(c)-cF'(c)\right)\pi_i + F'(c)\pi_{i+1}\right)+\sum_{i \in \mathcal{S} \backslash [j]}\left(\left(F(1)-F'(1)\right)\pi_i+F'(1)\pi_{i+1}\right)\\
    ={}& \left(F(c)-cF'(c)\right)\pi_0 + \sum_{i=1}^j\left(F(c)-(c-1)F'(c)\right)\pi_i + \left(F'(c)+F(1)-F'(1)\right)\pi_{j+1} \\
    &+ \sum_{i \in \mathcal{S} \backslash [j+1]} F(1)\pi_i
\end{align*}
where step (a) uses concavity to upper bound $F$ by a tangent at $c$ when $i \leq j$ and by a tangent at $1$ otherwise. Therefore we have:
\begin{align}
    \left(F'(1)-F'(c)\right)\pi_{j+1}&\leq \varepsilon+\left(F(c)-cF'(c)-F(1)\right)\pi_0+\sum_{i=1}^j\left(F(c)-(c-1)F'(c)-F(1)\right)\pi_i\notag\\
    &= \varepsilon-F'(c)\pi_0+\sum_{i=0}^j\left(F(c)-(c-1)F'(c)-F(1)\right)\pi_i \label{F'(1)=0_appendix_from}
\end{align}
Now, let $\delta = \sqrt{\frac{2\varepsilon}{-F''(1)}}$ and set $c=1+\delta$. Also, denote by $F''(1) = \lim_{x \rightarrow 1^+} F''(x)$. Then, we get
\begin{align*}
    &F(c)-(c-1)F'(c)-F(1) = F(1+\delta) - \delta F'(1+\delta) - F(1)\\
    \leq{}& F(1)+F'(1)\delta + \frac{F''(1)}{2}\delta^2 +\frac{M}{6}\delta^3 - \delta\left(F'(1)+F''(1)\delta-\frac{M}{2}\delta^2\right) - F(1)\\
    ={}& -\frac{F''(1)}{2}\delta^2 + \frac{2M}{3}\delta^3,
\end{align*}
and $F'(1)-F'(c) = F'(1)-F'(1+\delta) \geq -F''(1)\delta - \frac{M}{2}\delta^2$. 
Lastly, $-F'(c)=-F'(1+\delta)\leq -F'(1)-F''(1)\delta+\frac{M}{2}\delta^2<0$ for $\varepsilon$ small enough as $\delta = \sqrt{\frac{2\varepsilon}{-F''(1)}}\to 0$ as $\varepsilon \to 0$. Therefore, by the above inequalities, we have:
\begin{align} \label{cite_in_appendix}
    \left(-F''(1)\delta-\frac{M}{2}\delta^2\right)\pi_{j+1} &\leq \varepsilon +\sum_{i=0}^j\left(-\frac{F''(1)}{2}\delta^2+\frac{2M}{3}\delta^3\right)\pi_i\leq \varepsilon+\frac{-F''(1)}{2}\delta^2+\frac{2M}{3}\delta^3.
\end{align}
Note that $F''(1) < 0$ as $F$ is strictly concave, so, for $\varepsilon$ small enough, we have $-F''(1)\delta-\frac{M}{2}\delta^2 > 0$ as $\delta = \sqrt{\frac{2\varepsilon}{-F''(1)}}$. Thus, we get
\begin{align*}
    \pi_{j+1}&\leq \left(2\sqrt{\frac{-F''(1)\varepsilon}{2}} + \frac{2M}{3}\frac{2\varepsilon}{-F''(1)}\right)\left(-F''(1)-\frac{M}{2}\sqrt{\frac{2\varepsilon}{-F''(1)}}\right)^{-1} \\
    &= \left(\sqrt{\frac{2\varepsilon}{-F''(1)}} + \frac{4M \varepsilon}{3F''(1)^2}\right)\left(1-\frac{M\sqrt{2\varepsilon}}{2 (-F''(1))^{3/2}}\right)^{-1} \leq \sqrt{\frac{2\varepsilon}{-F''(1)}} + o(\sqrt{\varepsilon}),
\end{align*}
where the last inequality holds as $1/(1-x) \leq 1+2x$ for $x \in (0, 1/2)$.

Next, we upper bound $\pi_0$. Since $F$ is concave, so $\mathbb{E}_\pi[F(\lambda(\bar q))] \leq F(\mathbb{E}_\pi[\lambda(\bar q)]) \leq F(1) + F'(1)(\mathbb{E}_\pi[\lambda(\bar{q})]-1)$, and by $\mathbb{E}_\pi[F(\lambda(\bar q))]\geq F(1)-\varepsilon$, we get $\mathbb{E}_\pi[\lambda(\bar q)] \geq 1 - \varepsilon/F'(1)$. Thus, we get (refer to Equation \eqref{idle time} in Section~\ref{proof for prop 4.1} for details) 
\begin{equation}
\pi_0=1- \mathbb{E}_\pi[\lambda(\bar q)] \leq \frac{\varepsilon}{F'(1)}  \leq \sqrt{\frac{2\varepsilon}{-F''(1)}} \label{appendix_pi_0},
\end{equation}
where the last inequality holds for $\varepsilon > 0$ small enough.
Therefore, we get
\[
\pi_i\leq \sqrt{\frac{2\varepsilon}{-F''(1)}} +o(\sqrt{\varepsilon}) \quad \text{ for all } i \in \mathbb{Z}_+,
\]
so the the following holds:
\begin{equation} \label{eq:one point lower bound}
\begin{aligned}
    q^\star\geq \min\,\, &\mathbb{E}[\bar q]=\sum_{i=0}^\infty i\pi_i
    \text{ s.t. }\,\,  \pi_i\leq \sqrt{\frac{2\varepsilon}{-F''(1)}} + o(\sqrt{\varepsilon}) \quad \forall i \in \mathbb{Z}_+, \quad  \sum_{i=0}^\infty \pi_i=1.
\end{aligned}
\end{equation}
The optimal solution of \eqref{eq:one point lower bound} achieved at 
\[
\hat{\pi_i} = \begin{cases}
\sqrt{\frac{2\varepsilon}{-F''(1)}} + o(\sqrt{\varepsilon}), &\quad i\leq B,\\
0, &\quad i\geq B+2
\end{cases}
\]
with $\hat{\pi}_{B+1}$ chosen s.t. $\sum_{i=0}^\infty \hat{\pi}_i = 1$. Also, $B$ is the smallest integer s.t. $(B+2)\left(\sqrt{\frac{2\varepsilon}{-F''(1)}} + o(\sqrt{\varepsilon})\right) \geq 1$. Thus, we get $B \geq \sqrt{\frac{-F''(1)}{2\varepsilon}} + o(1/\sqrt{\varepsilon})$. Thus, we have
\begin{align*}
    q^\star&\geq \sum_{i=0}^\infty\hat{\pi_i} \geq \left(\sqrt{\frac{2\varepsilon}{-F''(1)}} +o(\sqrt{\varepsilon})\right) \sum_{i=0}^B i = \left(\sqrt{\frac{2\varepsilon}{-F''(1)}} +o(\sqrt{\varepsilon})\right) \frac{B(B+1)}{2} \\
    &\geq \frac{1}{2}\left(\sqrt{\frac{2\varepsilon}{-F''(1)}} +o(\sqrt{\varepsilon})\right)\left(\sqrt{\frac{-F''(1)}{2\varepsilon}} + o\left(\frac{1}{\sqrt{\varepsilon}}\right)\right)^2 = \sqrt{\frac{-F''(1)}{8\varepsilon}} + o\left(\frac{1}{\sqrt{\varepsilon}}\right)
\end{align*}
Hence $q^\star = \Omega\left(\frac{1}{\sqrt{\varepsilon}}\right)$ for all control policies with strictly concave reward function $F$.
\end{proof}

\textbf{Remark (Connection to $f-$divergence):} 
One can view the regret $F(1) - \mathbb{E}_\pi[F(\lambda(\bar{q}))]$ as an $f-$divergence for an appropriately defined $f$. In particular, one can set $\mathbb{E}_\pi[F(\lambda(\bar{q}))] = \sum \pi_i F(\pi_{i+1}/\pi_i)$, which looks like an $f-$ divergence between $\pi$ and its shifted version. However, the shifted version needs to be normalized for it to be a probability measure. To do this, define
\begin{align*}
    f_a(x) = F(a) + a F^\prime(a)(x-1) - F(ax), \quad a = 1-\pi_0
\end{align*}
and note that 
\begin{align*}
    \varepsilon\geq F(1) - \mathbb{E}[F(\lambda(\bar{q}))] = F(1)-F(a) + \sum \pi_i f_a\left(\frac{\pi_{i+1}}{a\pi_i}\right) \geq \sum \pi_i f_a\left(\frac{\pi_{i+1}}{a\pi_i}\right),
\end{align*}
where $\sum \pi_i f\left(\frac{\pi_{i+1}}{a\pi_i}\right)$ is now an $f-$ divergence between $\pi$ and its shifted and appropriately normalized version. Then, one can use the generalization of Pinsker's known as Csiszár-Kullback-Kraft Inequality to lower bound $f$ divergence by the TV distance to get
\begin{align*}
    \varepsilon \geq \sum \pi_i f_a\left(\frac{\pi_{i+1}}{a\pi_i}\right) \geq   4c_f\text{TV}\left(\pi_{i+1}/a, \pi_i\right)^2,
\end{align*}
where \(c_f = \frac{f''_a(1)}{2} \approx -\frac{F''(1)}{2}\) \cite{gilardoni2010pinsker}. Now, we know that for any set $A$, TV\(\left(\pi_{i+1}/a, \pi_i\right) \geq \left|\sum_{i\in A}\pi_{i+1}/a-\pi_i\right|\). Picking $A = \{0, ..., j\}$ gets $\pi_{j+1}\leq \sqrt{\frac{\varepsilon}{-2F''(1)}}$, which is of the same order with a slightly better constant than the upper bound derived in \eqref{appendix_pi_0} in the proof above. 
However, the difficulty is that the induced function $f_a$ need not be globally strongly convex, since $F$ may be linear on one side of \(1\), as allowed by our class of concave-like functions discussed in Appendix~\ref{concavelike_nonconcave}.
Nonetheless, at heart, our proof is essentially a by-product of a Pinsker's-type inequality.

\begin{wrapfigure}{r}{0.6\textwidth}
    \centering
    \includegraphics[width=0.8\linewidth]{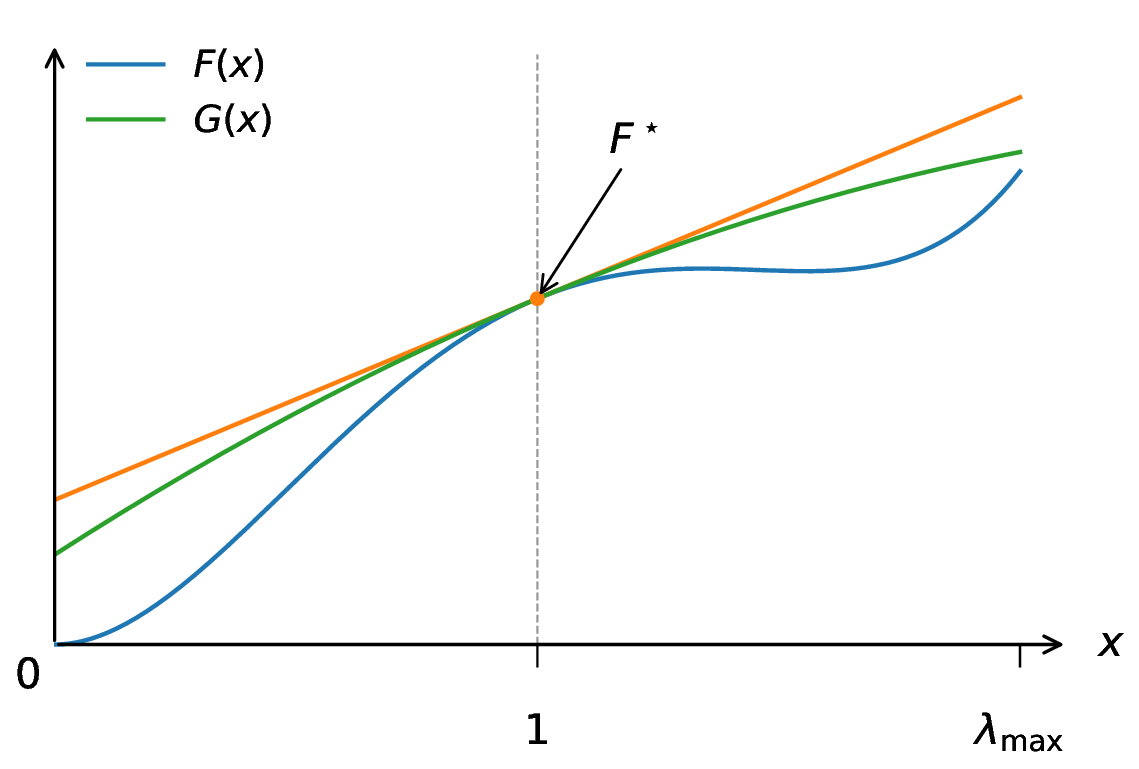}
    \caption{Construction of a quadratic function $G$}
    \label{fig:quadratic_main_part}
\end{wrapfigure}

Next, we generalize the \(\Omega(1/\sqrt{\varepsilon})\) lower bound from strictly concave reward functions to general functions \(F\) satisfying \eqref{eq:concave_like}. The main idea is to construct a strictly concave function \(G\) that upper bounds \(F\) while lying below the tangent line at \((1, F(1))\), as illustrated in Fig.~\ref{fig:quadratic_main_part}. A complete proof appears in Appendix~\ref{concavelike_nonconcave}.

\subsection{Two-arrival Policy}\label{sec:additional_log}
The below Theorem~\ref{thm4.13} establishes that, in many settings where the strictly concave function $(F)$ is in fact strongly concave, then any two-arrival policy must satisfy \(\mathbb{E}[\bar q]\geq \Omega(\sqrt{\frac{\log1/\varepsilon}{\varepsilon}})\), thereby incurring an additional $\sqrt{\log1/\varepsilon}$ factor relative to the universal lower bound $\Omega(1/\sqrt{\varepsilon})$ in Proposition~\ref{prop:strictly_concave}.

\begin{theorem}\label{thm4.13}
Under Assumption~\ref{assump2}, \ref{assump4}, \ref{assump F'(1)>0} and the first item of Assumption~\ref{eq:smooth_F}, if $F$ is {\bf strongly concave} and $\lambda_{\max}>1$, then there exists $\varepsilon_0>0$(depending on $F$), such that $q^\star_{TA} = \min\limits_{\lambda\in \Lambda_2} \mathbb{ E}_\pi[\bar q] = \Omega(\sqrt{\frac{\log
{1/\varepsilon}}{\varepsilon}})$ for all $\varepsilon\leq \varepsilon_0$,
where \(\Lambda_2\) is the class of all two-arrival policies.
\end{theorem}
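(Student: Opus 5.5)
Throughout, fix an arbitrary two-arrival policy $\lambda(q)=a$ for $q\le\tau$ and $\lambda(q)=b$ for $q>\tau$ that is feasible for \eqref{eq:constraint}. The plan is to turn the regret constraint into three explicit inequalities on $(a,b,\tau)$ via strong concavity. Then I compute $\pi$ in closed form and minimize the resulting lower bound on $\mathbb{E}[\bar q]$ over the parameters.

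\textbf{Degenerate cases.} If $a\le 1$ (or $b=a$), the chain is dominated by a static policy. The bound $\pi_0=1-\mathbb{E}_\pi[\lambda(\bar q)]\le \varepsilon/F'(1)$, obtained exactly as in \eqref{appendix_pi_0} (concavity plus \eqref{idle time}), then gives $\mathbb{E}[\bar q]=\Omega(1/\varepsilon)$ by the small-market argument of Proposition~\ref{prop small market}. Stability forces $b<1$. So the main case is $a=1+k_1$, $b=1-k_2$ with $k_1,k_2>0$, and $\lambda$ takes values in $[0,\lambda_{\max}]$.

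\textbf{Step 1 (regret constraints).} Strong concavity with modulus $\kappa>0$ gives
\[
F(x)\le F(1)+F'(1)(x-1)-\tfrac{\kappa}{2}(x-1)^2 .
\]
Take expectations under $\pi$, set $p=\pi([\tau])$, and use $\mathbb{E}_\pi[\lambda(\bar q)]=1-\pi_0$. This yields
\[
\varepsilon\ \ge\ F'(1)\pi_0+\tfrac{\kappa}{2}\big(pk_1^2+(1-p)k_2^2\big).
\]
Hence $\pi_0\le \varepsilon/F'(1)$, $pk_1^2\le 2\varepsilon/\kappa$, and $(1-p)k_2^2\le 2\varepsilon/\kappa$. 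The flow identity $pa+(1-p)b=1-\pi_0$ gives $pk_1=(1-p)k_2-\pi_0$.

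\textbf{Step 2 (closed form for $\pi$).} By detailed balance, $\pi_i=\pi_0a^i$ for $i\le\tau+1$, and $\pi_{\tau+1+j}=\pi_0a^{\tau+1}b^j$ for $j\ge0$. Then
\[
p=\pi_0\,\frac{a^{\tau+1}-1}{k_1}, \qquad 1-p=\pi_0\,\frac{a^{\tau+1}}{k_2}.
\]
Combining the second identity with $\pi_0\le\varepsilon/F'(1)$ gives $a^{\tau+1}\ge F'(1)(1-p)k_2/\varepsilon$. Therefore
\[
\tau+1\ \ge\ \frac{\log\big(F'(1)(1-p)k_2/\varepsilon\big)}{\log(1+k_1)}.
\]
For the queue length, the upper part alone contributes at least $(1-p)(\tau+1/k_2)$. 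The lower part is geometrically increasing, so all but an $a^{-\tau/2}$ fraction of its mass lies above $\tau/2$. This gives
\[
\mathbb{E}[\bar q]\ \gtrsim\ \tau+\frac{1-p}{k_2}\qquad\text{whenever } a^{\tau/2}\text{ is large.}
\]

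\textbf{Step 3 (optimize over parameters), with two cases.}
\begin{itemize}
\item If $p\ge 1/2$: then $k_1\le 2\sqrt{\varepsilon/\kappa}$. Also $(1-p)k_2\ge pk_1$, so $\log((1-p)k_2/\varepsilon)\gtrsim\log(k_1/\varepsilon)$. Unless $k_1$ is polynomially tiny (in which case $\tau\gtrsim 1/k_1$ is already huge), this gives
\[
\tau\gtrsim\frac{\log(1/\sqrt\varepsilon)}{\sqrt\varepsilon}\ \ge\ \sqrt{\log(1/\varepsilon)/\varepsilon}.
\]
\item If $p<1/2$: then $k_2\le 2\sqrt{\varepsilon/\kappa}$. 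Moreover $k_1\le 2\varepsilon/(\kappa pk_1)\approx 2\varepsilon/(\kappa(1-p)k_2)\lesssim \varepsilon/k_2$, so $k_1k_2\lesssim\varepsilon$. Then
\[
\mathbb{E}[\bar q]\ \gtrsim\ \frac1{k_2}+\frac{k_2}{\varepsilon}\log\frac{k_2}{\varepsilon}.
\]
Since $k_2\ge$ (say) $\varepsilon^{2/3}$ unless $1/k_2$ already dominates, minimizing over $k_2$ balances at $k_2\asymp\sqrt{\varepsilon/\log(1/\varepsilon)}$. This gives $\Omega(\sqrt{\log(1/\varepsilon)/\varepsilon})$.
\end{itemize}

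\textbf{Main obstacle.} The delicate step is Step 3. There I must control the approximations $pk_1\approx(1-p)k_2$ (the $\pi_0$ correction is $O(\varepsilon)$, which must be shown negligible relative to $pk_1$, or else handled as a separate subcase). I must also use $\log(1+k_1)\le k_1$ and treat the boundary regimes where $k_1$ is of constant order or $k_2$ is extremely small, so that the logarithm $\log(k_2/\varepsilon)$ is genuinely of order $\log(1/\varepsilon)$. I will handle these by splitting at thresholds $k_2\gtrless\varepsilon^{2/3}$ and $k_1\gtrless\varepsilon^{1/3}$, checking in each piece that one of the two terms $1/k_2$ or $\tau$ already exceeds $\sqrt{\log(1/\varepsilon)/\varepsilon}$.
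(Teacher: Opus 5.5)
Your argument is correct and uses the same ingredients as the paper. Strong concavity turns the regret constraint into $F'(1)\pi_0+\tfrac{\kappa}{2}\mathbb{E}_\pi[(\lambda(\bar q)-1)^2]\le\varepsilon$; the paper writes these two terms in the variable $t=(1+k_1)^\tau$. The closed-form stationary distribution then gives $\tau\gtrsim\log(\cdot/\varepsilon)/\log(1+k_1)$, and one balances $\tau$ against $1/k_2$ using $k_1k_2\lesssim\varepsilon$.

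The difference is the case structure. The paper first proves the $p$-free bound $\mathbb{E}[\bar q]\ge\frac14(\tau+1/k_2)$, by coupling with the policy that uses rate $1$ below $\tau$. It then splits on $k_2\lessgtr\varepsilon^{3/4}$ and on the ordering of $k_1,k_2$. It extracts $k_1k_2\lesssim\varepsilon$ from the second-moment term, directly if $k_1\le k_2$ and via $t\ge 2$ if $k_1\ge k_2$. It disposes of $k_1\le\varepsilon^{3/4}\le k_2$ by showing $k_2=O(\varepsilon)$.

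Your direct computation only gives the tail term $(1-p)/k_2$, which is why you split on $p$. In exchange, each case is transparent: the heavier side has deviation $O(\sqrt{\varepsilon})$ by the second-moment bound, and the flow identity $pk_1=(1-p)k_2-\pi_0$ transfers this to the other side. You also get the stronger bound $\log(1/\varepsilon)/\sqrt{\varepsilon}$ when $p\ge 1/2$. Finally, you cover $a\le 1$, which the paper tacitly excludes.

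Two small fixes are needed.

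First, the qualifier ``whenever $a^{\tau/2}$ is large'' is unnecessary. The lower block is non-decreasing, so its conditional mean is at least $\tau/2$, and $\mathbb{E}[\bar q]\ge\tau/2+(1-p)/k_2$ holds always.

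Second, in the tiny-$k_1$ subcase the claim $\tau\gtrsim 1/k_1$ is false as stated. For example, $k_1=\varepsilon^2$, $\tau\asymp 1/\varepsilon$, $k_2\asymp\varepsilon$ with $p\ge1/2$ satisfies all your constraints. The correct statement is a dichotomy. If $k_1(\tau+1)\le 1$, then $p\le(e-1)\pi_0(\tau+1)$, so $\tau+1\ge F'(1)/(2(e-1)\varepsilon)$. Otherwise $\tau+1>1/k_1$. Hence $\tau\gtrsim\min\{1/k_1,1/\varepsilon\}$, which is enough once $k_1\le\varepsilon^{2/3}$.
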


The strong concavity assumption is imposed mainly for technical convenience. Since \(F''(1)<0\), under the assumption of continuous \(F''\), any strictly concave reward function is strongly concave on a neighborhood of 1. Intuitively, under near-optimal policies with small regret \(\varepsilon\), the arrival rates \(\lambda(q)\) must remain close to the fluid-optimal value 1, as larger deviations would incur significant reward loss for concave $(F)$. Hence, on the relevant interval \([1-k_2,1+k_1]\) , strict concavity already implies strong concavity when \(k_1,k_2\) are small. Extending the result to all strictly concave functions is therefore possible but omitted here to keep the proof concise. The proof is provided in Appendix ~\ref{proof of simulation in appendix}.

Next, consider the two-arrival policy with \(\lambda(q)=1+k_1\) when \(q<\tau\), and \(\lambda(q) = 1-k_2\) when \(q\geq \tau\), with parameters chosen as 
\begin{equation}\label{simulation_2arrival}
k_1 = \sqrt{\frac{\varepsilon}{-F''(1)}}\sqrt{\log\frac{1}{\varepsilon}}, k_2 = \sqrt{\frac{\varepsilon}{-F''(1)}}\frac{1}{\sqrt{\log\frac{1}{\varepsilon}}}, \text{ and } \tau = \left\lceil \frac{1}{2}\sqrt{\frac{-F''(1)}{\varepsilon}}\sqrt{\log\frac{1}{\varepsilon}} \right\rceil.
\end{equation}
Under this choice of parameters, one can obtain the following upper bound on the optimal expected steady-state queue length \(q^\star \leq \frac{3}{2}\sqrt{\frac{-F''(1)}{\varepsilon}}\sqrt{\log\frac{1}{\varepsilon}}\left(1+o(1)\right),\)
which matches the order of the lower bound established earlier. 
The verification that the resulting policy \eqref{simulation_2arrival} satisfies the regret constraint \(R(\lambda)\leq \varepsilon\) follows similar arguments as in Section~\ref{sec:qstar_upper_bound}, relying on a third-order Taylor expansion, and is therefore omitted.


\section{Discussion}
\subsection{Market Size-Throughput Trade-off under the Fully Dynamic Policy}\label{alpha1/4}

\begin{wrapfigure}{r}{0.6\textwidth}
\centering
\begin{tikzpicture}
\begin{axis}[
    width=0.85\linewidth,
    height=0.55\linewidth,
    axis lines=left,
    xlabel={Market size $\bar{\lambda}_{\max}$},
    ylabel={Throughput},
    xmin=0.95, xmax=2.6,
    ymin=0.72, ymax=1.03,
    xtick={1.15,1.40,2.30},
    xticklabels={
        {$1+\epsilon^{1/4}$},
        {},
        {fixed $\bar{\lambda}_{\max}>1$}
    },
    ytick={0.80,0.96,1},
    yticklabels={
        {$1-\Theta(\epsilon)$},
        {$1-\Theta(\epsilon^{3/2})$},
        {$1$}
    },
    tick label style={font=\small},
    label style={font=\small},
    clip=false,
]

\draw[thick]
    (axis cs:1.15,0.80)
    .. controls (axis cs:1.28,0.93)
                and (axis cs:1.75,0.96) ..
    (axis cs:2.30,0.96);

    \addplot[
        dashed,
        domain=0.98:1.15
    ]
    {0.80};

    \addplot[
        dashed,
        domain=0.98:2.30
    ]
    {0.96};

    \draw[dashed]
        (axis cs:1.15,0.72) --
        (axis cs:1.15,0.80);

    \draw[dashed]
        (axis cs:2.30,0.72) --
        (axis cs:2.30,0.96);

    \addplot[
        only marks,
        mark=*,
        mark size=2pt
    ]
    coordinates {(1.15,0.80) (2.30,0.96)};

\end{axis}
\end{tikzpicture}
\caption{Market size--throughput trade-off.
}
\label{fig:market_size_throughput_tradeoff}
\end{wrapfigure}

Consider the class of fully dynamic policies defined in \eqref{symmetry_dynamic} and parameterized by
\(\bar{\lambda}_{\max}\in [1+\varepsilon^\alpha,\lambda_{\max}]\), where \(0<\alpha<1/4\) and
\(m=\left\lceil 1/(\sqrt{\bar{\lambda}_{\max}}-1)\right\rceil-1\).
For any choice of \(\bar{\lambda}_{\max}\) in this range, Section~\ref{sec:qstar_upper_bound} shows that the fully dynamic policy achieves a reward of at least \(F(1)-\varepsilon\) and an average queue length of at most \(\sqrt{-7F''(1)}/\sqrt{\varepsilon}\).
Thus, changing \(\bar{\lambda}_{\max}\) preserves the same asymptotic efficiency--reward trade-off between reward and congestion.
It is then natural to ask how \(\bar{\lambda}_{\max}\) should be chosen in practice.

This choice can be viewed as a trade-off between market size and system throughput.
Under \eqref{symmetry_dynamic}, the maximum arrival rate is attained at \(q=0\), with
\(\lambda(0)=((m+2)/(m+1))^2\approx \bar{\lambda}_{\max}\), so \(\bar{\lambda}_{\max}\) controls the maximum market size allowed by the policy.
Meanwhile, using \eqref{plug_pi_0}, together with
\(m=\Theta(1/(\bar{\lambda}_{\max}-1))\) and \(B=\Theta(\varepsilon^{-1/2})\), by Equation \eqref{idle time} in Appendix A, the throughput satisfies
\begin{equation}
\mathbb{E}_{\pi}[\lambda(q)]
=
1-\pi_0
=
1-\Theta\left(\frac{(m+1)^2}{Bm^2+B^2m+B^3}\right)
=
1-\Theta\left(\frac{\varepsilon^{3/2}}{(\bar{\lambda}_{\max}-1)^2}\right).
\end{equation}
Hence, increasing \(\bar{\lambda}_{\max}\) increases the system throughput while preserving both the reward guarantee and the \(O(1/\sqrt{\varepsilon})\) average queue-length bound.
The two extremes are as follows: when \(\bar{\lambda}_{\max}=1+\varepsilon^\alpha\) with \(\alpha\approx 1/4\), the throughput is \(1-\Theta(\varepsilon)\), whereas for a fixed \(\bar{\lambda}_{\max}>1\), it is \(1-\Theta(\varepsilon^{3/2})\).
We illustrate this market size--throughput trade-off in Fig.~\ref{fig:market_size_throughput_tradeoff}.

\subsection{Price Stability under Dynamic Arrival Control}

In revenue management settings, the arrival rate is often controlled indirectly through pricing, where different prices induce different customer arrival rates. Therefore, in addition to congestion performance, the variability of prices faced by customers is also a relevant performance metric. Since the fully dynamic policy employs a much larger set of arrival rates than the two-arrival policy, one may expect its improved congestion performance to come at the cost of substantially larger price fluctuations. Surprisingly, this is not the case. Under mild regularity conditions, we show that the variance of prices under the two policies has the same asymptotic order.

Recall from \textbf{Case 3 (Revenue Maximization)} in Section~\ref{model} that the reward (revenue) function is defined as the product of the price and arrival rate,
so that
\(
p(\lambda)=\frac{F(\lambda)}{\lambda}.
\)
Suppose that \(p(\lambda)\) is twice continuously differentiable and that \(p''(\lambda)\) is uniformly bounded over the relevant range of arrival rates. A second-order Taylor expansion around \(\lambda=1\) yields
\(
p(\lambda)
\approx
p(1)+p'(1)(\lambda-1).
\)
Under some regularity conditions ensuring that the Taylor remainder is negligible relative to the leading-order terms, which can be verified for both policies, we obtain
\begin{align}
\operatorname{Var}(p(\lambda))
&\approx
\left(p'(1)\right)^2
\operatorname{Var}(\lambda).
\label{eq:price_variance}
\end{align}
Thus, it suffices to compare the arrival-rate variance under the two policies.

By rate balance (see Equation \eqref{idle time} in Appendix A for precise calculations), we have
\(\mathbb{E}[\lambda]=1-\pi_0\), where \(\pi_0\) is the stationary probability of an empty system. Therefore,
\begin{align}
\operatorname{Var}(\lambda)
=\mathbb{E}\!\left[(\lambda-1)^2\right]-\left(\mathbb{E}[\lambda]-1\right)^2
=\mathbb{E}\!\left[(\lambda-1)^2\right]-\pi_0^2.
\label{eq:lambda_variance_decomposition}
\end{align}
So now we focus on calculating  \(\mathbb{E}[(\lambda-1)^2]\) and \(\pi_0^2\) for the two arrival policy \eqref{simulation_2arrival} and the fully dynamic policy \eqref{symmetry_dynamic} below. For the two-arrival policy, we have (by \eqref{pi_0_twoarrive} in Appendix~\ref{proof of simulation in appendix})
\[
\pi_0
=
\left(
\frac{(1+k_1)^\tau-1}{k_1}
+
\frac{(1+k_1)^\tau}{k_2}
\right)^{-1}
=
\Theta\!\left(
\frac{\varepsilon}
{\sqrt{\log(1/\varepsilon)}}
\right),
\]
and hence
\(\pi_0^2=o(\varepsilon)\).
Moreover (by \eqref{pi_q_less_tau} in Appendix~\ref{proof of simulation in appendix}),
\begin{align}
\mathbb{E}\!\left[(\lambda-1)^2\right]
&=
k_1^2\pi_{q<\tau}
+
k_2^2\pi_{q\geq\tau}
=
k_1k_2+\pi_0(k_2-k_1)
=
\Theta(\varepsilon),
\label{eq:twoarrival_secondmoment}
\end{align}
where the last equality follows from
\(k_1k_2=\Theta(\varepsilon)\) and
\(\pi_0(k_2-k_1)=o(\varepsilon)\).

For the fully dynamic policy in
\eqref{symmetry_dynamic}, we present the calculation only for the case \(m=0\) for simplicity. ($m =0$ has most change in the arrival rate) By the result in the previous section ~\ref{alpha1/4},
\(\pi_0=\Theta(B^{-3})=\Theta(\varepsilon^{3/2})\), so that
\(\pi_0^2=o(\varepsilon)\). Then
\begin{align}
\mathbb{E}\!\left[(\lambda-1)^2\right]
&=
\sum_{q=0}^{B-1}
\left(
8+\frac{2}{(q+1)^2}
\right)\pi_0
+
\left(
4-\frac{4}{B+1}
+\frac{1}{(B+1)^2}
\right)\pi_0
\notag\\
&=
(8B+O(1))\pi_0
=
\Theta\!\left(\frac1{B^2}\right)
=
\Theta(\varepsilon),
\label{eq:dynamic_secondmoment}
\end{align}
where we use
\(
\pi_0=\Theta(B^{-3})
\text{ and }
B=\Theta(\varepsilon^{-1/2}).
\)

Therefore, since \(\pi_0^2=o(\varepsilon)\) under both policies,
\eqref{eq:twoarrival_secondmoment} and
\eqref{eq:dynamic_secondmoment}
imply
\(
\operatorname{Var}(\lambda)
=
\Theta(\varepsilon)
\)
for both the two-arrival and fully dynamic policies.
Substituting this into \eqref{eq:price_variance} yields
\(
\operatorname{Var}(p(\lambda))
=
\Theta(\varepsilon)
\)
under both policies. Thus, the variance of prices under the fully dynamic policy has the same asymptotic order as the two arrival policy.


\subsection{Dynamic Pricing with Delay Sensitive Customers}\label{value of DP in large queue system}
The current state-of-the-art for dynamic pricing in a single server queue is \cite{kim2018value}, with the focus on maximizing the long-run profit. Their model can be translated to our more general setting as follows: They consider an $M_q/M/1$ queue with service rate of $n$ and a maximum arrival rate $n \lambda_{\max}$. Given a queue length $q$, the system operator sets a price of $p(q)$. Customers respond to both the posted price and the current waiting time: an incoming customer joins with probability $\bar{G} \left(p(q) + \frac{hq}{n}\right)$, where $h > 0$ is a constant, $G$ is the cumulative distribution function of customer valuations with a non-decreasing hazard rate, and $\bar{G}(\cdot) = 1-G(\cdot)$. Under this formulation, the arrival rate is given by
\begin{align*}
    \lambda(q) = n \lambda_{\max} \bar{G} \left(p(q) + \frac{hq}{n}\right) \implies p(q) = \bar{G}^{-1} \left(\frac{\lambda(q)}{n\lambda_{\max}}\right) - \frac{hq}{n}.
\end{align*}
The revenue/profit $(P)$ in steady-state is then 
\begin{align*}
    P_n = \mathbb{E}\left[p(\bar{q})\lambda(\bar{q})\right] = \mathbb{E}\left[\lambda(\bar{q})\bar{G}^{-1} \left(\frac{\lambda(\bar{q})}{n\lambda_{\max}}\right)\right] - \mathbb{E}\left[\frac{h\bar{q}\lambda(\bar{q})}{n}\right] := n\mathbb{E}\left[F\left(\frac{\lambda(\bar{q})}{n}\right)\right] - \mathbb{E}\left[\frac{h\bar{q}\lambda(\bar{q})}{n}\right],
\end{align*}
where we define $F(\lambda) = \lambda \bar{G}^{-1}(\lambda/\lambda_{\max})$, which coincides with the reward function in our model. Assuming $G$ to have a non-decreasing hazard rate ensures $F$ is a concave function. Moreover, \cite{kim2018value} defines the fluid profit $P^\star_n$ as $\max\limits_{\lambda \leq n} nF(\lambda/n) = nF(1)$, which is used to define the revenue loss as follows:
\begin{align*}
    P^\star_n - P_n = n\left(F(1) - \mathbb{E}\left[F\left(\frac{\lambda(\bar{q})}{n}\right)\right]\right) + \mathbb{E}\left[\frac{h\bar{q}\lambda(\bar{q})}{n}\right].
\end{align*}
Equivalently, one may consider a rescaled \(M_q/M/1\) system with unit service rate and arrival rates divided by \(n\). Under this scaling, the stationary distribution of the underlying CTMC remains unchanged, and thus \(\mathbb{E}[\bar q]\) is invariant with respect to \(n\). In our terminology, \(\mathbb{E}\left[F\left(\frac{\lambda(\bar{q})}{n}\right)\right]\) corresponds to the long-run average reward. Therefore, the above is equivalent to \(P^\star_n - P_n = n R(\lambda) + h\mathbb{E}\left[\bar{q}\lambda(\bar{q})\right].\)
Under the fully dynamic policy \eqref{symmetry_dynamic}, we have shown that the regret $R(\lambda) \leq \varepsilon$, and $\mathbb{E}[\bar{q}] \leq C/\sqrt{\varepsilon}$, for some $C>0$. It follows that \(P^\star_n - P_n \leq n \varepsilon + \frac{4hC}{\sqrt{\varepsilon}}.\)
Choosing $\varepsilon = (4hC/n)^{2/3}$, we get
\begin{align}
    P^\star_n - P_n \leq 2 (4hC)^{2/3} n^{1/3}. \label{eq: n_one_third}
\end{align}
Thus, our proposed fully dynamic policy achieves the optimal revenue loss of $O(n^{1/3})$, while a two-price policy of \cite{kim2018value} incurs a revenue loss of $O((n \log n)^{1/3})$. This result is in-line with the observation in \cite{kim2018value} that the additional logarithmic term appears due to the discontinuity of the
two-price policy. Our suggested dynamic policy overcomes this issue by varying the price/arrival-rate more gradually with the queue lengths.


This phenomenon parallels our earlier observation that two-support arrival policies with regret at most \(\varepsilon\) cannot reduce the expected queue length below \(\Omega(\sqrt{\log (1/\varepsilon)/\varepsilon})\), whereas fully dynamic policies achieve \(\Theta(1/\sqrt{\varepsilon})\). Similarly, \cite{kim2018value} show that while a broad class of dynamic pricing policies must incur a revenue loss of at least \(\Omega(n^{1/3})\), their two-price policy is suboptimal by a logarithmic factor.

Finally, Theorem \ref{general:1 support} implies that the expected queue length is at least $\Omega(1/\sqrt{\varepsilon})$ under any arbitrary dynamic policy with regret of at most $\varepsilon$. This result immediately implies the bounds in \eqref{eq: n_one_third} holds in the other direction as well (albeit with a different constant). Thus, we conclude that the revenue loss $(P^\star_n-P_n)\geq \Omega(n^{1/3})$ for an arbitrary admissible policy, with no structural assumptions on how the arrival rate varies as a function of the queue length. This result considerably generalizes the lower bound presented in \cite{kim2018value}, which is restricted to policies that are monotonic, and the arrival rates are a small perturbation of a static arrival rate, i.e., $|\lambda(q)-\lambda^\star| \leq \delta$ for some $\lambda^\star > 0$ and sufficiently small $\delta > 0$.

\section{Numerical Experiments}

\subsection{Fully dynamic policies versus two-arrival policy when \(F\) is strictly concave}\label{sec:numerical}
In this section, we present numerical experiments to compare the performance of asymptotically optimal two-arrival policies, fully dynamic arrival policies, and the optimal state-dependent policy obtained from the corresponding average-reward Markov decision process (MDP). The MDP solution serves as a numerically obtained optimal benchmark,
allowing us to quantify the optimality gap of the proposed
analytical policies; see Appendix~\ref{app:mdp} for details
of the numerical MDP formulation and solution procedure.

We emphasize that \(\varepsilon\) is used as a {\bf tuning parameter} to generate candidate control policies, rather than the theoretical regret. Specifically, we select a grid of tuning parameters $\varepsilon \in \{0.001, 0.004, 0.007, 0.01, 0.025,\allowbreak
0.04, 0.055, 0.07, 0.085, 0.1\}$. For each \(\varepsilon\), we instantiate the two-arrival and fully dynamic policies as following:

{\bf Two-arrival policy.}
For the two-arrival policy, we implement the asymptotically optimal policy introduced in \eqref{simulation_2arrival}. 

{\bf Fully dynamic arrival policies.}
To illustrate the performance of fully dynamic arrival control, we consider the following class of state-dependent arrival policies parameterized by \(k>1\):
\[
\lambda(q) = \begin{cases}
    \left(\frac{q+2}{q+1}\right)^k, \quad & q = 0,1,\cdots, B-1\\
    \left(\frac{2B-q}{2B-q+1}\right)^k, \quad & q = B, B+1, \cdots, 2B-1\\
    0, \quad &q\geq 2B
\end{cases}\quad , B = \left\lceil \sqrt{\frac{-F''(1)}{\varepsilon}(\frac{k^2(k+1)}{2(k-1)}+1)}\right\rceil.
\]

Note that we set \(m=0\) and allowed the exponent in \eqref{symmetry_dynamic} to vary over \(k>1\). For each fixed \(k>1\), the choice of \(B\) follows from the same upper bound analysis used for \(k=2\). Although the proof in Section~\ref{sec:qstar_upper_bound} is carried out explicitly for \(k=2\) to keep the analysis concise, the same construction and upper-bound scaling apply to all \(k>1\). In our numerical experiments, we consider \(k=1.2, \frac{\sqrt{5}+1}{2}, 2\) to illustrate how the choice of \(k\) affects performance in practice.

For each policy, we compute $R(\lambda)$ as defined in \eqref{eq:constraint} and plot the normalized regret ratio $R(\lambda) / F(1)$ on the horizontal axis to remove scaling effects caused by different magnitudes of \(F(1)\) under different reward functions. The vertical axis reports the simulated steady-state expected queue length. Therefore, each curve represents the efficiency-reward trade-off induced by the corresponding policy. For comparison, we also solve the corresponding average-reward MDP numerically using policy iteration.


We report simulation results for two representative reward functions: a concave quadratic reward \(F(x)=5x-x^2\), and a strictly concave but non-quadratic reward \(F(x)=\sqrt{x}\). The resulting trade-off curves are shown in Fig.~\ref{fig:Simulation for different F}, where we compare the two-arrival policy, fully dynamic policies, and the numerically optimal MDP benchmark. 

\begin{figure}[htbp]
  \centering
  \begin{subfigure}[t]{0.48\textwidth}
    \centering
    \includegraphics[width=\linewidth]{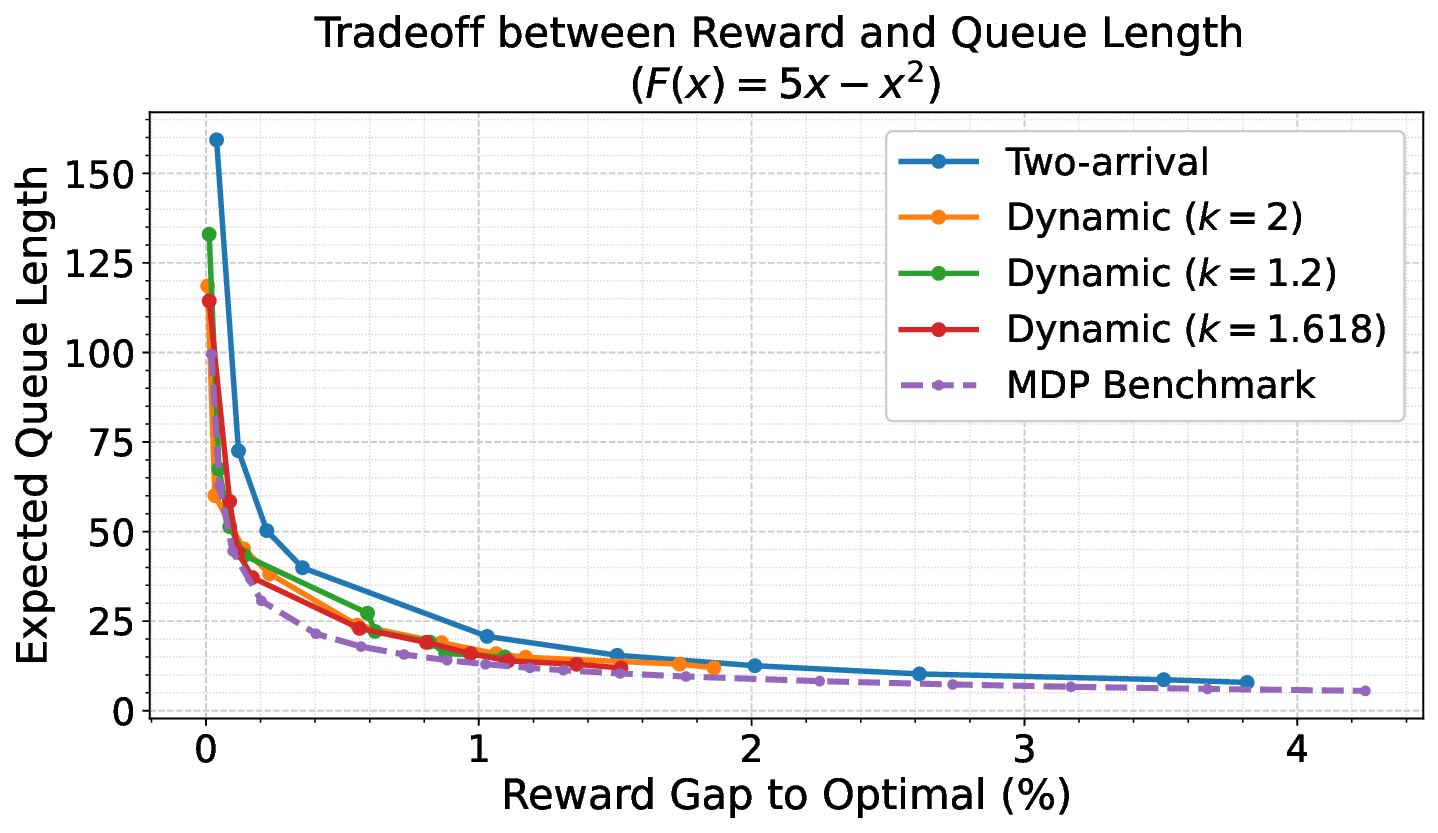}
    \subcaption{$F(x)=-x^2+5x$}
    \label{fig:when F is quadratic}
  \end{subfigure}
  \begin{subfigure}[t]{0.48\textwidth}
    \centering
    \includegraphics[width=\linewidth]{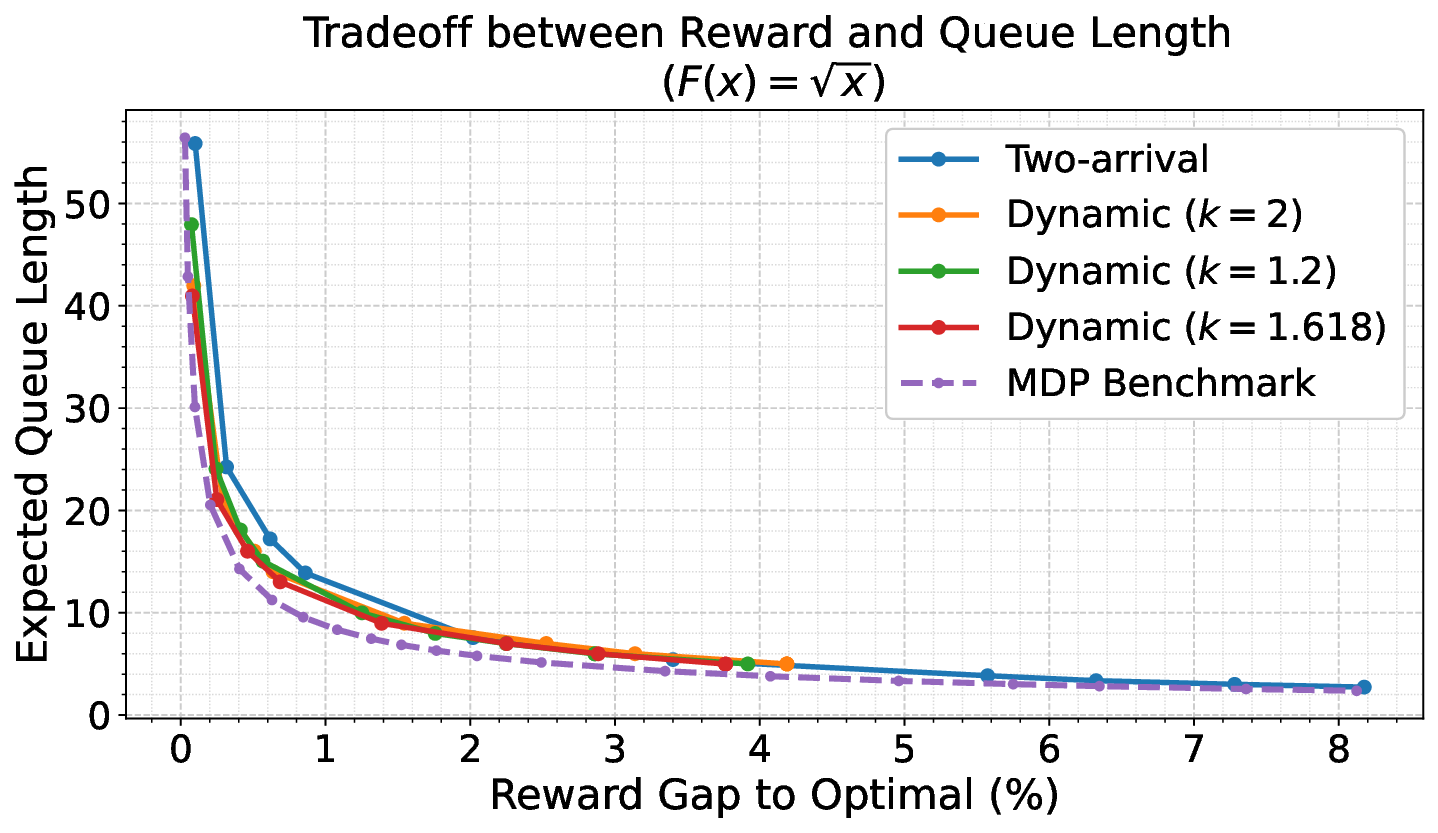}
    \subcaption{$F(x)=\sqrt{x}$}
    \label{fig:When F is square root}
  \end{subfigure}
  \caption{Tradeoff comparison among two-arrival policies, fully dynamic policies, and the optimal MDP benchmark.}
  \label{fig:Simulation for different F}
\end{figure}

The numerical results strongly support our theoretical findings. For both reward functions, fully dynamic policies consistently achieves lower expected queue lengths than the two-arrival policy while achieving the same regret. Moreover, the performance differences among fully dynamic policies with different values of \(k>1\) are relatively modest, although \(k = \frac{\sqrt{5}+1}{2}\) yields slightly better performance. This observation supports the analytically convenient choice \(k=2\) adopted in our theoretical analysis. 
Furthermore, the fully dynamic policies seems to be closer to the MDP optimal than the two-arrival policy, 
indicating that our proposed fully dynamic policies capture most of the efficiency gains attainable by unrestricted state-dependent control while remaining analytically tractable. This advantage is particularly pronounced when regret is small. For example, in Fig.~\ref{fig:when F is quadratic}, at $R(\lambda)/F(1) \approx 0.13\%$, the expected queue length reduces from $75$ for two-arrival policy to around $45$ for fully dynamic policies, while the MDP optimal is $40$.



We note that \(F(x) = \sqrt{x}\) does not satisfy the bounded third-derivative condition in Assumption~\ref{eq:smooth_F}, yet, as observed in Fig.~\ref{fig:When F is square root}, the fully dynamic policy continues to perform better than the two-arrival policy and exhibits small gap compared to the MDP optimal. This suggests that the qualitative efficiency-reward trade-offs identified in this paper are
robust beyond the regularity conditions imposed for analytical tractability.

{
\subsection{Beyond exponential service-time distributions}\label{sec:pareto}

In this section, we provide additional experiments to examine whether the fully dynamic policy continues to outperform the simpler two-arrival policy in general \(M/G/1\) setting. In particular, we consider two Pareto service-time distributions with same mean \(\mu = 1\), and different shape parameters \(\alpha = 2.5\) and \(\alpha = 2.1\), respectively. The service time density is
\(
f(x) = \frac{\alpha x_m^\alpha}{x^{\alpha+1}}, \quad x \ge x_m,
\)
where \(x_m = (\alpha - 1)/\alpha\) is chosen so that \(\mathbb{E}[S]=1\).

Both distributions have finite variance with
\(
\mathrm{Var}(S) = \frac{1}{\alpha(\alpha-2)},
\)
In particular, the variances are \(0.8\) and \(4.76\) respectively. A smaller value of \(\alpha\) corresponds to a heavier tail, meaning that large service times occur more frequently.


\begin{figure}[htbp]
\centering

\begin{subfigure}[t]{0.48\textwidth}
    \centering
    \includegraphics[width=\linewidth]{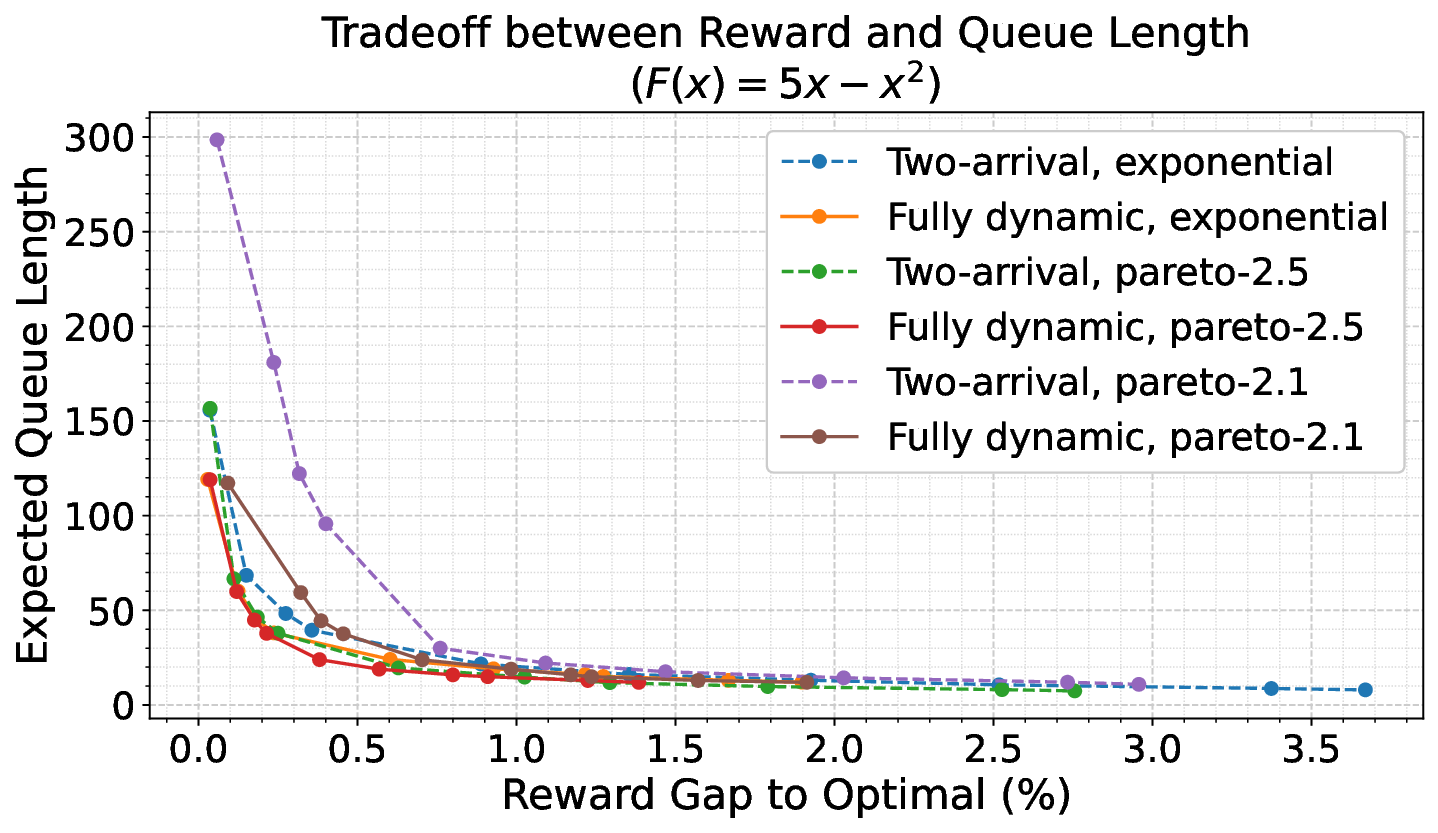}
    \caption{$F(x)=5x-x^2$}
    \label{fig:MG1_quadratic}
\end{subfigure}
\hfill
\begin{subfigure}[t]{0.48\textwidth}
    \centering
    \includegraphics[width=\linewidth]{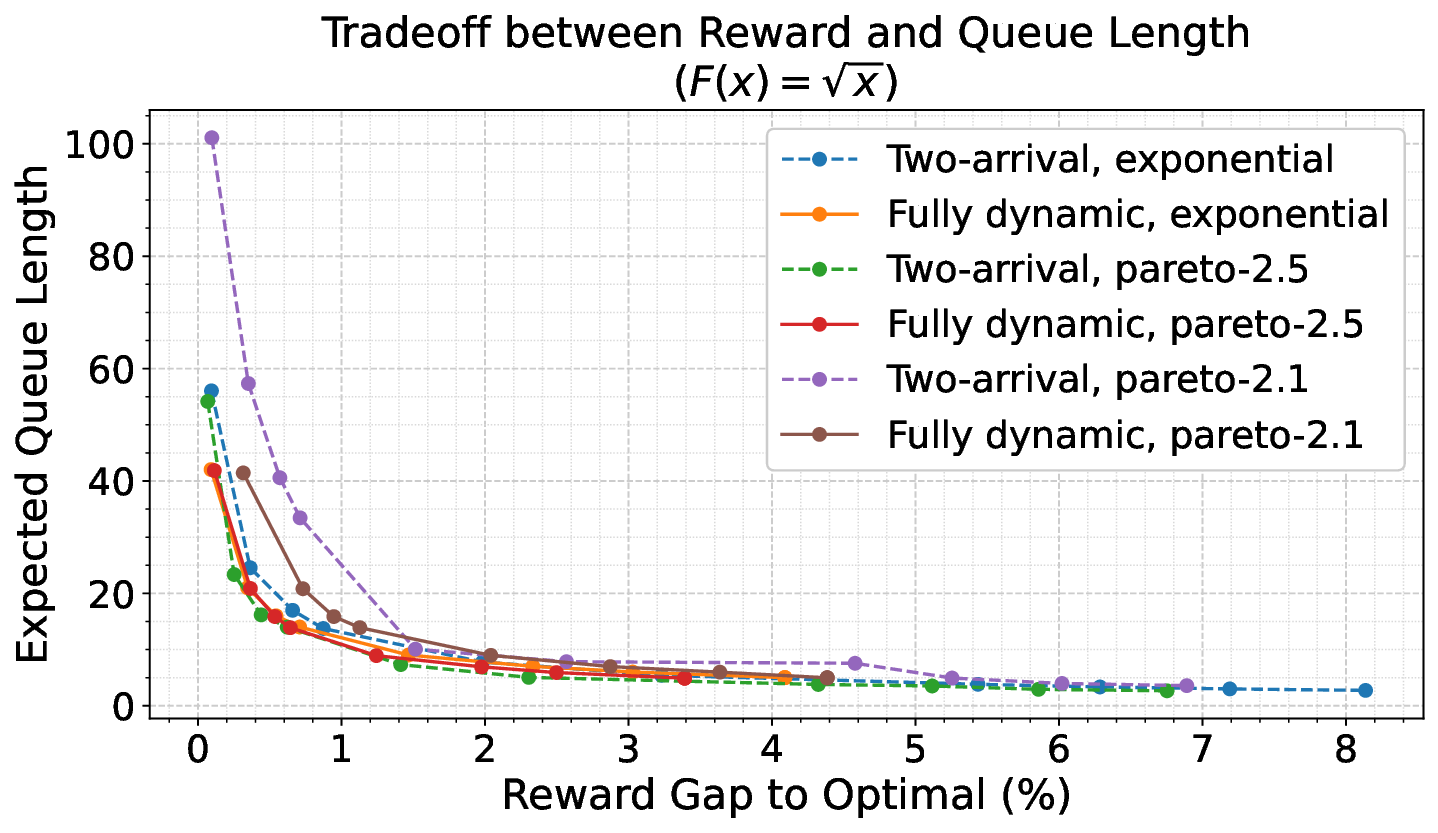}
    \caption{$F(x)=\sqrt{x}$}
    \label{fig:MG1_sqrt}
\end{subfigure}

\caption{
Comparison of two-arrival and fully dynamic policies under different service-time distributions.
}
\label{fig:MG1_robustness}

\end{figure}

From Fig.~\ref{fig:MG1_robustness}, the fully dynamic policy consistently outperforms the two-arrival policy across both reward functions and all three service-time distributions. The performance gap becomes substantially more pronounced for the heavier-tailed Pareto distribution with $\alpha=2.1$. Intuitively, this robustness arises from the structure of the fully dynamic policy, which induces a strong positive drift when the queue length is below a threshold $B$, and a strong negative drift when it exceeds $B$, together with an effective upper buffer around $2B$. This drift structure mitigates the impact of large service-time fluctuations and keeps the queue length more tightly concentrated around $B$.



For further comparison, we introduce a three-arrival policy as a robust version of the two-arrival policy. Specifically, this policy coincides with the two-arrival policy when $q< 6\tau$, but sets the arrival rate to zero when $q \ge 6\tau$. This modification eliminates the buffering effect present in the fully dynamic policy, and allows us to examine whether the fully dynamic policy continues to provide additional benefits beyond such threshold-based controls.

\begin{figure}[htbp]
\centering

\begin{subfigure}[t]{0.48\textwidth}
    \centering
    \includegraphics[width=\linewidth]{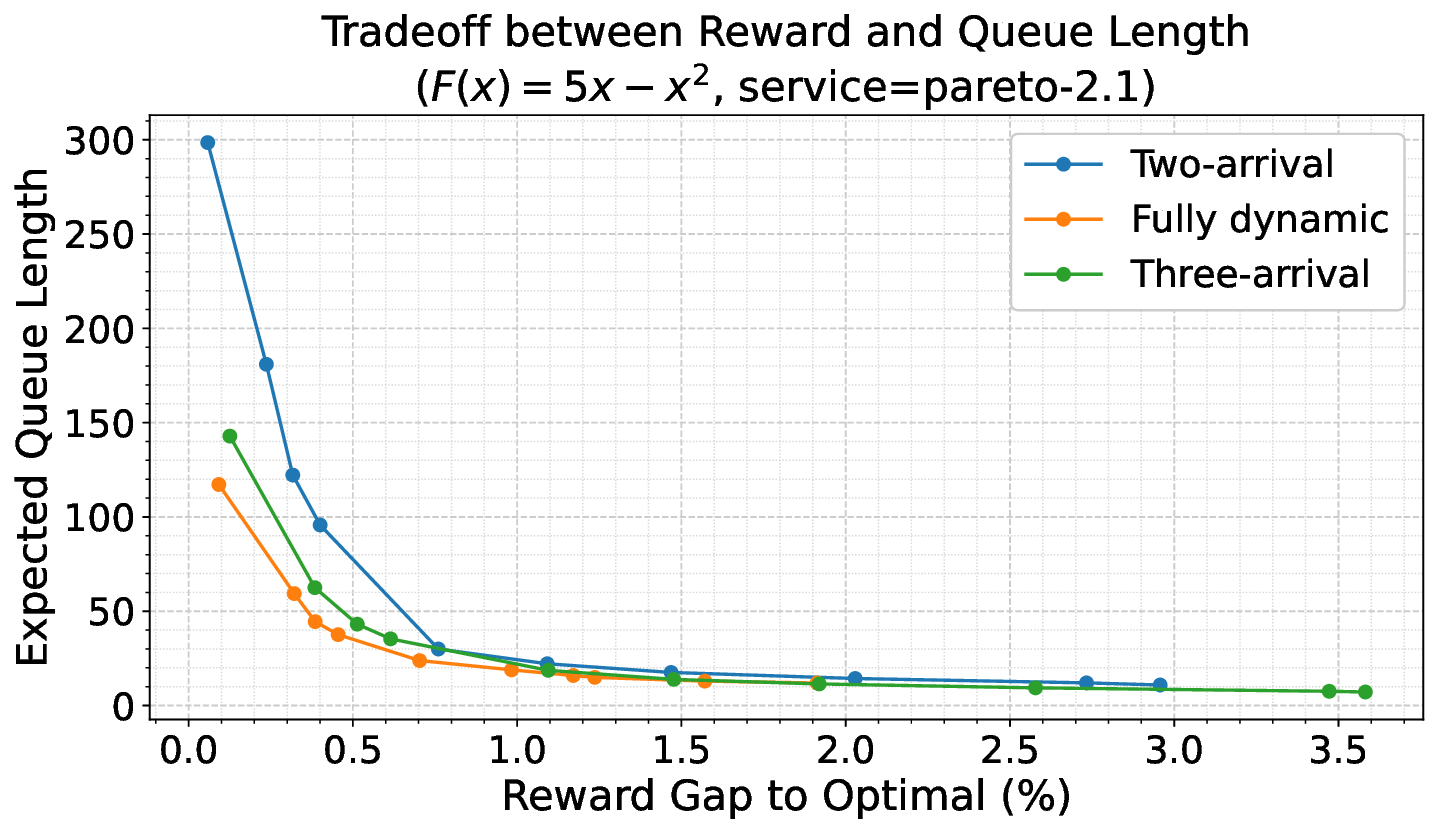}
    \caption{$F(x)=5x-x^2$}
    \label{fig:three_policy_quadratic}
\end{subfigure}
\hfill
\begin{subfigure}[t]{0.48\textwidth}
    \centering
    \includegraphics[width=\linewidth]{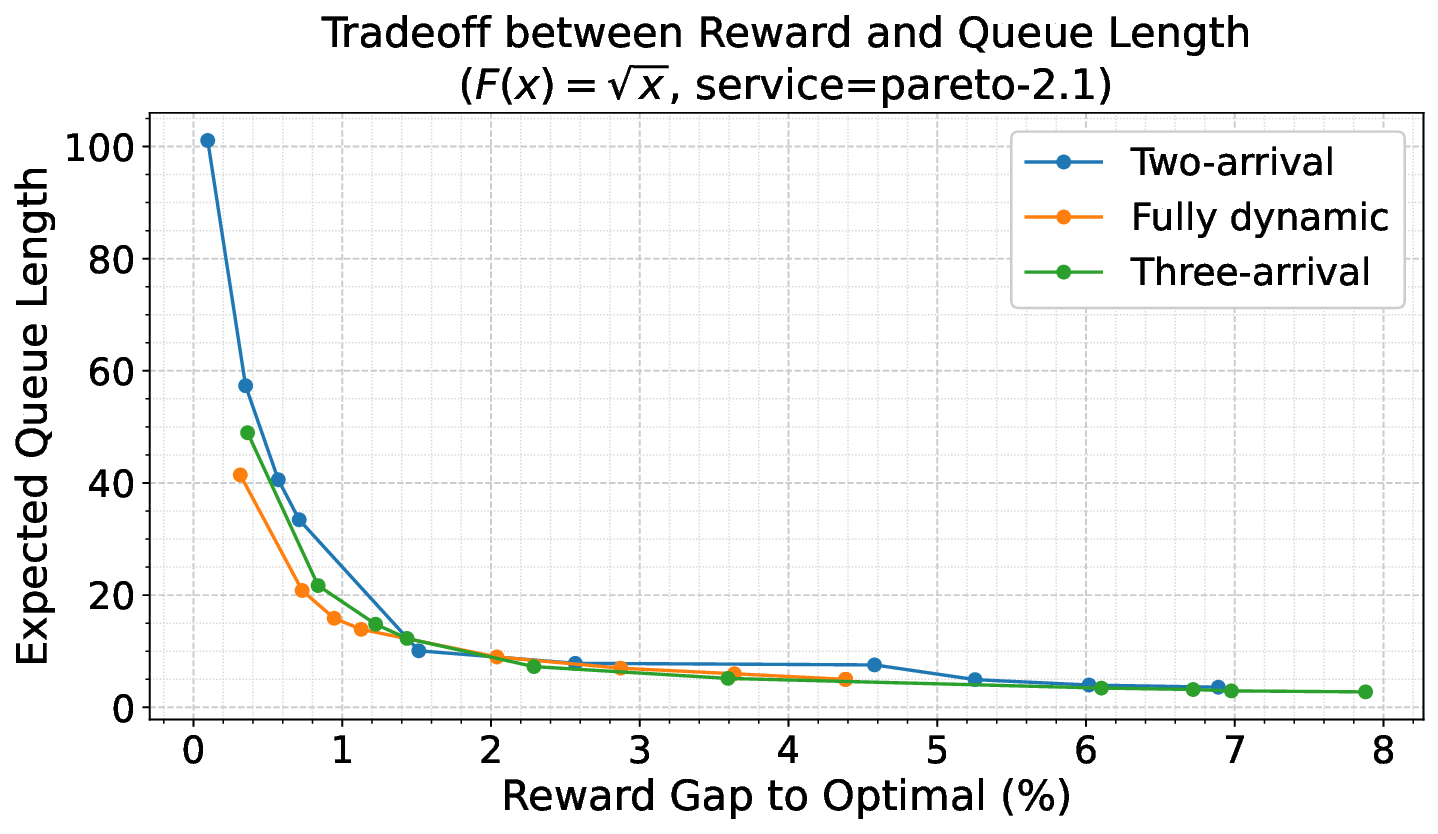}
    \caption{$F(x)=\sqrt{x}$}
    \label{fig:three_policy_sqrt}
\end{subfigure}

\caption{
Comparison of two-arrival, three-arrival, and fully dynamic policies with Pareto ($\alpha=2.1$) service times.
}
\label{fig:three_policy_comparison}

\end{figure}

From Fig.~\ref{fig:three_policy_comparison}, we observe that the efficiency--reward trade-off of the three-arrival policy is very similar to that of the two-arrival policy. Even after eliminating the effective upper buffer, the fully dynamic policy continues to outperform both threshold-based policies. This suggests that the performance advantage of the fully dynamic policy is not primarily driven by the presence of the upper buffer but rather by the stronger state-dependent drift around the target level \(q=B\). 


}

\subsection{Transient Performance across Initial Conditions and Service Distributions}

We next study the transient queue-length behavior under different initial conditions and service-time distributions. Building on the two-arrival and fully dynamic policies studied in Subsections~\ref{sec:numerical} and~\ref{sec:pareto}, respectively, we additionally include the static policy as a baseline in order to compare the fluctuations of sample paths for various control policies. We plot \(20\) independent queue-length trajectories, each for a time horizon of \(T=10^5\). Throughout this experiment, we set \(\varepsilon=0.01\). We consider two choices of initial queue lengths $q(0) \in \{0, 150\}$ and consider either exponential or Pareto service distributions with parameter \(\alpha=2.1\) introduced in Subsection~\ref{sec:pareto}. The results are plotted in Fig.~\ref{fig:transient_sample_paths}.

\begin{figure}[htbp]
    \centering

    \includegraphics[width=0.95\textwidth]{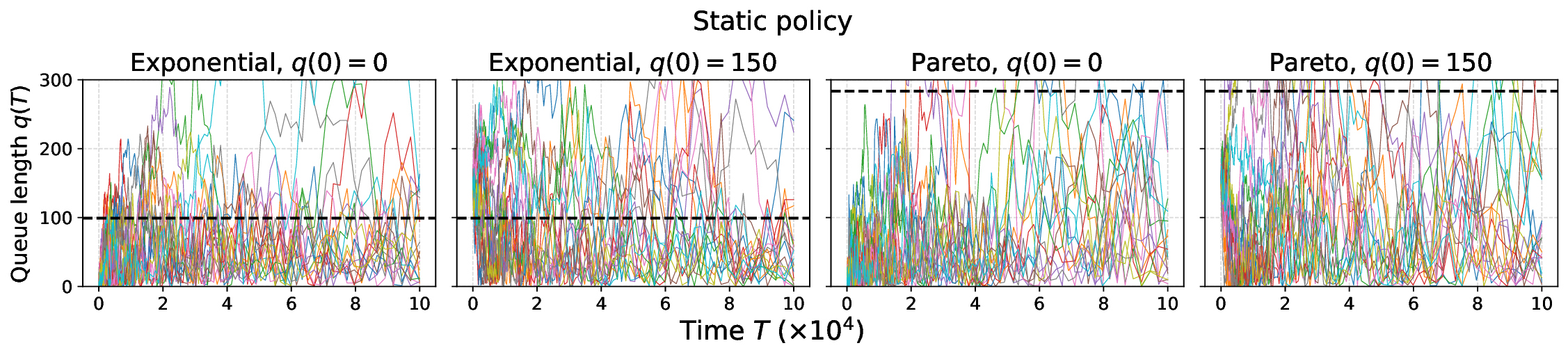}

    \vspace{-0.2em}

    \includegraphics[width=0.95\textwidth]{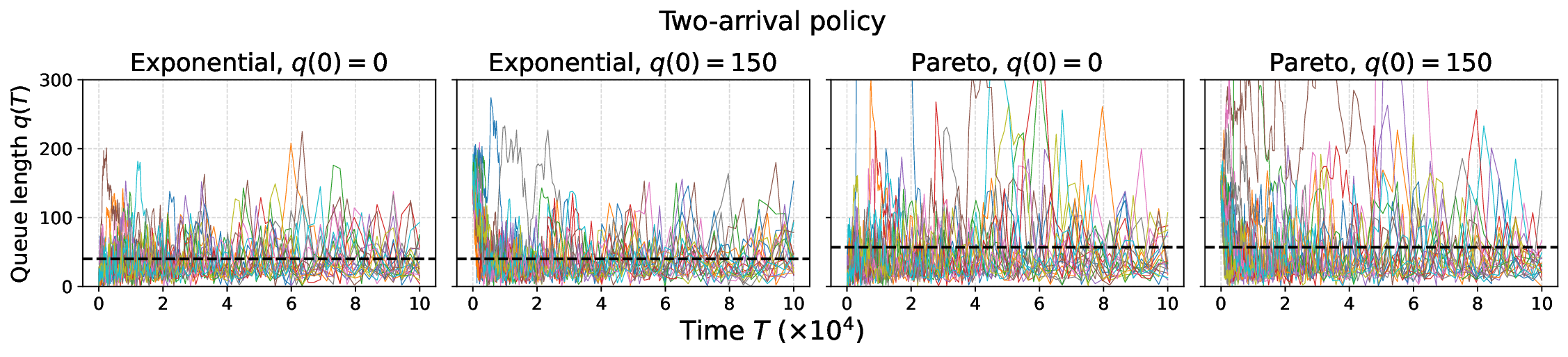}

    \vspace{-0.2em}

    \includegraphics[width=0.95\textwidth]{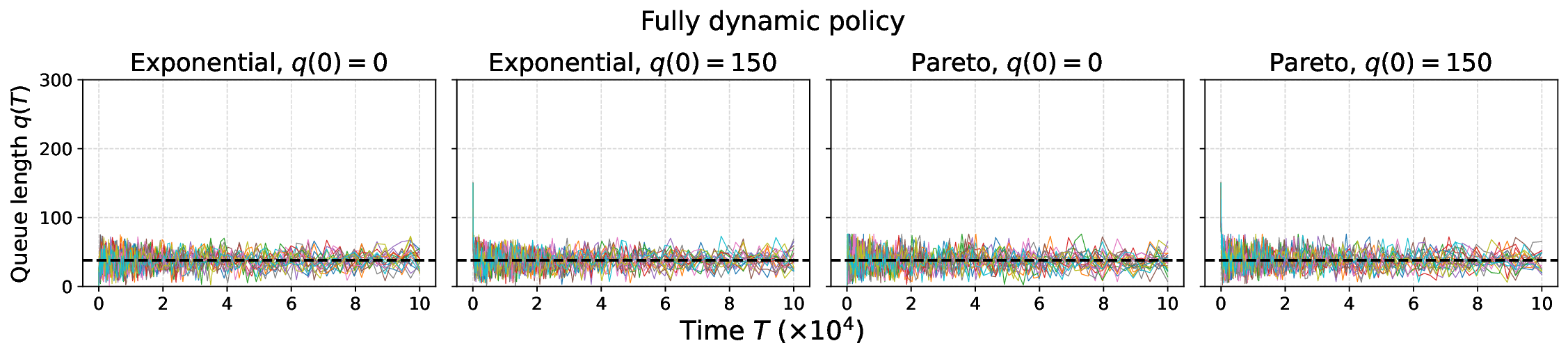}

    \caption{
    Sample-path trajectories of the queue length under the static,
    two-arrival, and fully dynamic policies.
    }
    \label{fig:transient_sample_paths}
\end{figure}

The first observation is that for all the 12 cases, the effect of initial queue length disappears over time. 
Next, one can observe the effect of the service time distribution, especially for the static and two-arrival policies. In particular, replacing exponential service times by the heavier-tailed Pareto distribution generates substantially greater sample-path variability. More specifically, the queue length exceeds 300 for several trajectories under the static arrival policy, a few trajectories under the two-arrival policy, and rarely any under the fully dynamic policy. Thus, the queue length fluctuation is the least under the fully dynamic policy. It is also interesting to see that the queue length trajectories under the fully dynamic policy remain largely insensitive to the service distributions, indicating robustness to the variance of the service distribution. This robustness can be understood through the state-dependent drift induced by the fully dynamic policy. When a long service time causes the queue to increase, the policy continuously reduces the arrival rate as the queue grows, thereby creating a progressively stronger corrective effect against further congestion. In comparison, the two-arrival policy provides only a coarser form of feedback through a single threshold, while the static policy provides no queue-dependent adjustment at all. 

\subsection{Comparison across difference market size when \(F\) is linear}
In throughput optimization, i.e., \(F(x)=x\), the optimal arrival control admits an explicit threshold structure. Specifically, the optimal policy takes the form 
\begin{equation} \label{lambdamax_0}
\lambda(q) = \begin{cases}
    \lambda_{\max}, \quad &\text{ if } q<\tau \\
    0, \quad &\text{ if } q\geq \tau
\end{cases} \quad, \tau = \left\lceil \frac{\log \frac{\lambda_{\max}-1+\varepsilon}{\varepsilon}}{\log\lambda_{\max}}-1 \right\rceil,
\end{equation}
where the threshold \(\tau\) is chosen to satisfy the regret constraint \eqref{eq:constraint}.


\begin{wrapfigure}{r}{0.6\textwidth}
    \centering
    \includegraphics[width=0.8\linewidth]{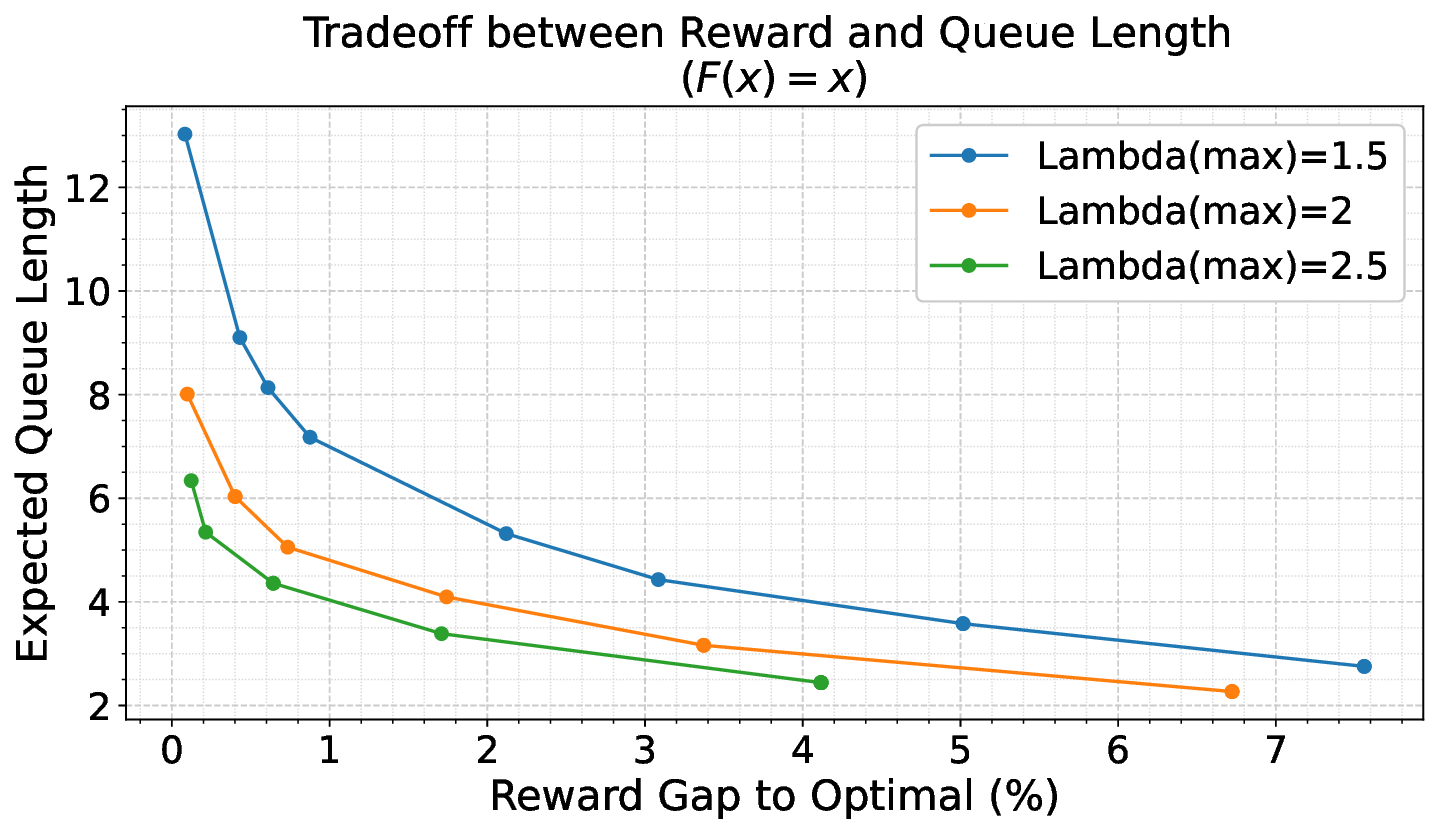}
    \caption{Tradeoff comparison between different \(\lambda_{\max}\)}
    \label{fig:Linear Throughput}
\end{wrapfigure}

In our numerical experiments, we fix the same tuning parameter \(\varepsilon\) and generate arrival policies according to \eqref{lambdamax_0} for different values of \(\lambda_{\max}\), which represent different market sizes. The simulation results are shown in Fig.~\ref{fig:Linear Throughput}.

Two observations emerge clearly from Fig.~\ref{fig:Linear Throughput}. First, under the same normalized regret ratio, a larger \(\lambda_{\max}\) yields a smaller expected steady-state queue length. This reflects the fact that, in larger markets, the system can more effectively concentrate congestion by admitting traffic aggressively when the queue is short and sharply cutting off arrivals once congestion builds up. Second, comparing Fig.~\ref{fig:Linear Throughput} with Fig.~\ref{fig:Simulation for different F}, linear rewards yield substantially lower queue lengths than strictly concave rewards at comparable normalized regret levels, even when \(\lambda_{\max}\) is only slightly greater than 1. This behavior is consistent with our theoretical characterization: the linear reward function falls into the non-concave-like regime, for which the optimal efficiency-reward trade-off scales as \(\Theta(\log 1/\varepsilon)\), in contrast to the \(\Theta(1/\sqrt{\varepsilon})\) scaling that arises under strictly concave rewards.

\section{Conclusion}
We study a single-server queue with state-dependent arrival control and characterize the fundamental efficiency–reward trade-off under a fluid benchmark formulation. By viewing classical heavy-traffic scaling as a special case, we develop a unified framework that explains how dynamic arrival control alters delay behavior when operating near optimal reward. Our results show that both market size and the local structure of the reward function at capacity play decisive roles. In small markets, dynamic control offers no efficiency improvement beyond classical heavy-traffic behavior, whereas in large markets it enables substantially improved delay performance. Moreover, when the reward function exhibits a concave-like behavior at capacity, fully dynamic arrival control is necessary to achieve the optimal \(\Theta(1/\sqrt{\varepsilon})\) scaling, while simpler policy classes incur unavoidable inefficiencies. In contrast, when this condition fails, simple two-level policies already achieve order-optimal performance \(\Theta(\log 1/\varepsilon)\). Together, these results provide a sharp and interpretable characterization of efficiency–reward trade-offs in queueing systems with endogenous arrivals.

\section*{Acknowledgments}

We thank Pengyu Qian for pointing out the potential connection between our work and the locally polyhedral condition \cite{huang2009delay}, which motivated the development of Section~\ref{polyhedral}.

\bibliographystyle{plainnat}
\bibliography{sample-base}

\clearpage

\appendix
\input{app}

\end{document}

%% file: app.tex
\section{} \label{appendix:proof for 4, 5.1}
\subsection{Proof of Proposition~\ref{lemma3.2}}\label{proof for prop 4.1}
\begin{proof}
    Let $\widetilde{C} = \{\lambda(q) : q\in \mathcal{S}\}\subseteq [0, \lambda_{\max}]$ is the image of arrival rate function $\lambda$, with $X$ is a random variable on $\widetilde{C}$. By definition we know $|\widetilde{C}|\leq|\mathcal{S}|$, which indicates that the cardinality of set $\widetilde{C}$ is countable. Thus, we can express $\widetilde{C} = \{\widetilde{\lambda_1}, \widetilde{\lambda_2}, \cdots\}$. Let $\alpha = (\alpha_1,\alpha_2,\cdots)\in \mathbb{R}_+^{|\widetilde{C}|}$ be the probability mass function corresponding to $\widetilde{C}$ satisfying 
    \[
    \alpha_i = \mathbb{P}(X=\widetilde{\lambda_i}) = \sum_{q\in\mathcal{S}, \lambda(q) = \widetilde{\lambda_i}}\pi_q, \] 
    then $\sum_{i=1}^{|\widetilde{C}|}\alpha_i=1$ and $\alpha_i\geq 0$ for all $i$ are satisfied.

    Therefore, with $\mathbb{E}_\alpha[F(X)] \leq F^\star$ in \eqref{eq:def Fstar}, we have
    \[
    \mathbb{E}_\pi[F(\lambda(\bar q))] = \sum_{q\in \mathcal{S}} F(\lambda(q))\pi_q = \sum_{i=1}^{|\widetilde{C}|}\sum_{q\in \mathcal{S}, \lambda(q)=\widetilde{\lambda_i}}F(\widetilde{\lambda_i})\pi_q = \sum_{i=1}^{|\widetilde{C}|}F(\widetilde{\lambda_i})\alpha_i = \mathbb{E}_\alpha[F(X)] \leq F^\star.
    \]
    This leads us to the conclusion that $F^\star$ is the supremum reward under any control policy.

    And we want to show $\mathbb{E}_\pi[\lambda(\bar q)] < 1$ if the CTMC is positive recurrent: 

For convenience, define $\pi_q = 0$ for all $q\in \mathbb{Z_+}\setminus \mathcal{S}$, therefore $\pi_{q+1} = \lambda(q)\pi_q$ for all $q\in \mathbb{Z_+}$, 
    \begin{equation} \label{idle time}
    \mathbb{E}_\pi[\lambda(\bar q)] = \sum_{q=0}^\infty \lambda( q)\pi_q = \sum_{q=0}^\infty \pi_{q+1} = 1-\pi_0,
    \end{equation}
    since $\pi_0>0$, we get $\mathbb{E}_\pi[\lambda(\bar q)]<1$,

    therefore
\[
    1\geq \mathbb{E}_\pi(\lambda(\bar{q})) = \sum_{q\in \mathcal{S}} \lambda(q)\pi_q = \sum_{i=1}^{|\widetilde{C}|}\sum_{q\in \mathcal{S}, \lambda(q)=\widetilde{\lambda_i}}\widetilde{\lambda_i}\pi_q = \sum_{i=1}^{|\widetilde{C}|}F(\widetilde{\lambda_i})\alpha_i = \mathbb{E}_\alpha[X].
\]
\end{proof}

\subsection{Proof of Proposition~\ref{prop small market}} \label{proof_small_market}
\begin{proof}
    To prove Proposition~\ref{prop small market}, we first prove the following proposition.
\begin{lemma}\label{lemmma to prove small market}
    For any $F$ satisfies Assumption~\ref{assump2}, ~\ref{assump4} and ~\ref{assump F'(1)>0}, there exists a slope $k\in (0, F'(1)]$, such that $F(x)\leq k(x-1) +F(1)$ for all $x\in [0,1]$.
\end{lemma}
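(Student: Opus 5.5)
The plan is to first show that in the small-market setting $F^\star = F(1)$, so that $F(x) < F(1)$ for every $x\in[0,1)$. Then I take $k$ to be the minimum over $[0,1]$ of the chord slope $\bigl(F(1)-F(x)\bigr)/(1-x)$, with the value $F'(1)$ assigned at $x=1$.

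\textbf{Step 1: identify $F^\star$.} Here $\lambda_{\max}=1$, so $F$ is defined on $[0,1]$ and every $\alpha\in\mathcal{P}([0,1])$ automatically satisfies $\mathbb{E}_\alpha[X]\le 1$. Since $F$ is continuous on the compact set $[0,1]$ (Assumption~\ref{assump2}), it attains its maximum $\max_{[0,1]}F$. Every feasible $\alpha$ gives $\mathbb{E}_\alpha[F(X)]\le \max_{[0,1]}F$, and the Dirac measure at a maximizer is feasible. Hence $F^\star=\max_{[0,1]}F$. By Assumption~\ref{assump4}, no maximizer lies in $[0,1)$, so the maximum is attained only at $x=1$. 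Therefore $F^\star=F(1)$ and $F(x)<F(1)$ for all $x\in[0,1)$.

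\textbf{Step 2: define the chord-slope function.} Set
\[
g(x)=\frac{F(1)-F(x)}{1-x}\ \text{ for } x\in[0,1), \qquad g(1)=F'(1).
\]
By Step 1, $g(x)>0$ on $[0,1)$. Since $F$ is continuously differentiable, $g$ is continuous on $[0,1)$ and $\lim_{x\to1^-}g(x)=F'(1)$. So $g$ is continuous on all of $[0,1]$. By Assumption~\ref{assump F'(1)>0}, $g(1)=F'(1)>0$, so $g$ is strictly positive on the whole compact interval $[0,1]$.

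\textbf{Step 3: choose $k$ and conclude.} Let $k=\min_{x\in[0,1]}g(x)$, which is attained by compactness and continuity. Strict positivity of $g$ gives $k>0$, and $k\le g(1)=F'(1)$. For $x\in[0,1)$ we have $F(1)-F(x)=g(x)(1-x)\ge k(1-x)$, which rearranges to $F(x)\le k(x-1)+F(1)$. At $x=1$ the inequality holds with equality.

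The only delicate point is Step 1. Assumption~\ref{assump4} is stated relative to $F^\star$, not $F(1)$, so one must check that the constraint $\lambda_{\max}=1$ forces $F^\star=F(1)$. Without this, the chord slopes $g(x)$ could fail to be positive and the bound would not have intercept $F(1)$.
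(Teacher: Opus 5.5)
Your proof is correct, but it takes a different route from the paper's. The paper splits $[0,1]$ into two pieces. Using continuity of $F'$ and $F'(1)>0$, it picks $s<1$ with $F'>0$ on $[s,1]$ and sets $k_1=\min_{[s,1]}F'$, so that $F(1)-F(x)=\int_x^1F'\ge k_1(1-x)$ there. On $[0,s]$ it uses the gap $k_2=F(1)-\max_{[0,s]}F>0$ together with $x-1\ge -1$. It then takes $k=\min\{k_1,k_2\}$.

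Your chord-slope function $g$ merges the local (derivative) regime and the global (gap) regime into a single compactness argument. This buys two things. First, you need only continuity of $F$ and differentiability at $x=1$, not continuity of $F'$. Second, your $k=\min_{[0,1]}g=\inf_{x<1}\frac{F(1)-F(x)}{1-x}$ is exactly the largest slope for which the inequality can hold. The paper's $\min\{k_1,k_2\}$ is never larger and is typically strictly smaller, so your choice gives the sharpest constant in the resulting bound $q^\star\ge k/(2\varepsilon)$ of Proposition~\ref{prop small market}. The paper's version is, in turn, slightly more hands-on, building $k$ from two elementary quantities.

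Your Step~1 is also a genuine gain in rigor. The paper just asserts ``$F(x)<F(1)$ by Assumption~\ref{assump4}'' (and writes $\max_{[0,s]}F<1$ where $<F(1)$ is meant), silently using $F^\star=F(1)$. That identity holds only because $\lambda_{\max}=1$. This matters: for $\lambda_{\max}>1$ the lemma as literally stated can fail. There $F^\star$ may exceed $F(1)$, so some $x<1$ can have $F(1)<F(x)<F^\star$, consistent with Assumption~\ref{assump4} yet violating the desired inequality.
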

\begin{proof}
    Under Assumption~\ref{assump2} $F'(1)$ is continuous, and under Assumption~\ref{assump F'(1)>0}, $F'(1)>0$, so there exists a $s\in (0,1)$ such that $F'(x)>0$ for all $x\in [s,1]$.\\
    Define $k_1 = \min\limits_{x\in[s,1]}F'(x)$, then for any $x\in [s,1]$
    \[
    \begin{split}
    F(1) - F(x) = \int_{x}^1 F'(x) dx &\geq \int_{x}^1 k_1 dx = k_1 (1-x),
    \end{split}
    \]
    hence $F(x)\leq k_1(x-1)+F(1)$ for all $x\in [s,1]$.\\
    Next is to show $F(x)\leq k_2(x-1)+F(1)$ for some $k_2>0$ and for all $x\in [0, s]$.\\
    By Assumption~\ref{assump4}, $F(x)<F(1)$ for any $x\in [0,1)$, so for $s<1$,
    \[
    \max_{x\in [0,s]} F(x) < 1.
    \]
    Let $k_2 = F(1) - \max\limits_{x\in [0,s]} F(x)>0$, then for any $x\in [0,s]$, we get
    \[
    F(x)\leq \max_{x\in [0,s]} F(x) = F(1) - k_2 \leq k_2 (x-1)+F(1).
    \]
    Above all, choose $k = \min\{k_1,k_2\}>0$, then
    \begin{equation}
    F(x)\leq k(x-1)+F(1), \quad \forall x\in [0,1]. \label{5.1line}
    \end{equation}
\end{proof}

Since $\lambda_{\max}=1$, and by Assumption~\ref{assump4}, $F^\star=F(1)$, so $\mathbb{E}_\pi[F(\lambda(\bar q))]\geq F^\star-\varepsilon = F(1)-\varepsilon$, 
then for any policy $\lambda$, let $\widetilde{C} = \{\lambda(q) : q\in \mathcal{S}\} = \{\widetilde{\lambda_1}, \widetilde{\lambda_2}, \cdots\} \subseteq [0, 1]$ is the image of arrival rate function $\lambda$. $X$ is a random variable on $\widetilde{C}$ with probability measure $\alpha$ satisfying $P(X=\tilde{\lambda_i}) = \alpha_i$ for all $i$ and $\sum_{i=1}^{|\widetilde{C}|}\alpha_i=1$. Hence we have
\[
\begin{split}
    F(1)-\varepsilon &\leq \mathbb{E}_\alpha[F(X)] = \sum_{i=0}^{|\widetilde{C}|} F(\widetilde{\lambda_i})\alpha_i
    \overset{(a)}\leq \sum_{i=0}^{|\widetilde{C}|} (\widetilde{\lambda_i}(k-1)+F(1))\alpha_i \\
    &\overset{(b)}= k\mathbb{E}_\alpha[X] - k + F(1)
    \overset{(c)}= -k\pi_0+F(1),
\end{split}
\]
where (a) is from \eqref{5.1line}, (b) is by $\mathbb{E}_\alpha[X] = \sum_{i=0}^{|\tilde{C}|}\tilde{\lambda_i}\alpha_i$ and $\sum_{i=1}^{|\widetilde{C}|}\alpha_i=1$. The last one (c) is by Section~\ref{proof for prop 4.1}, equation \eqref{idle time}.

Hence $\pi_0\leq \frac{\varepsilon}{k}$ is a necessary but not sufficient condition of $R(\lambda) = F(1)-\mathbb{E}_\pi[\lambda(\bar q)]\leq \varepsilon$, so we have:
\begin{equation} \label{eq:opt in small market}
\begin{aligned}
    q^\star&\geq \min\,\, \mathbb{E}[q]=\sum_{i=0}^\infty i\pi_i\quad 
    \text{s.t.}\,\,\pi_0\leq \frac{\varepsilon}{k} \\
    & \text{and }\pi_{i+1}\leq \lambda_{\max}\pi_i = \pi_i \,\,\forall i\geq 0,
    \quad \sum_{i=0}^\infty \pi_i=1. 
\end{aligned}
\end{equation}
The optimal solution of \eqref{eq:opt in small market} achieved at
\[
\hat{\pi_i} = \begin{cases}
\frac{\varepsilon}{k}, &\quad i\leq B,\\
1-(B+1)\frac{\varepsilon}{k}, &\quad i=B+1\\
0, &\quad i\geq B+2
\end{cases}
\]
where $B$ is the threshold satisfying $(B+1)\frac{\varepsilon}{k}\leq 1< (B+2)\frac{\varepsilon}{k}$, so we have
\begin{align*}
    q^\star\geq \sum_{i=0}^\infty\hat{\pi_i} &= \frac{\varepsilon} {k} \sum_{i=0}^B i + (B+1)(1-(B+1)\frac{\varepsilon}{k})
    =B+1+\frac{\varepsilon}{k}\left(\frac{B(B+1)}{2}-(B+1)^2\right)\\
    &= B+1 - \frac{\varepsilon}{k}(\frac{B(B+1)}{2} + B+1)
    \overset{(a)}{\geq} B+1-\frac{B}{2}-1=\frac{B}{2}=\frac{k}{2\varepsilon} + o(\frac{1}{\varepsilon}),
\end{align*}
where (a) is by $(B+1)\frac{\varepsilon}{k}
\leq 1$. Hence $q^\star= \Omega\left(\frac{1}{\varepsilon}\right)$ for all control policies with small-market $(\lambda_{\max} = 1)$.

\end{proof}

\subsection{Proof of Proposition~\ref{prop:how_many_supp_point}}\label{opt_sol_large_market}
From the textbook \cite{moment_book}, Section 6.6, consider the problem

\begin{equation*}
    \sup_{\alpha\in \mathfrak{M}} \mathbb{E}_\alpha[F(X)], \quad F: [0, \lambda_{\max}] \rightarrow \mathbb{R}
\end{equation*}
We denote $\bar{\mathfrak{P}}$ as the set of probability measures on the measurable space is $([0,\lambda_{\max}], \mathcal{B}([0,\lambda_{\max}]))$, and $\mathfrak{M}\subseteq \bar{\mathfrak{P}}$ is defined by moment constraints as follows:

\begin{align*}
    \mathfrak{M} := \left\{\alpha \in \bar{\mathfrak{P}} :\quad \mathbb{E}_\alpha [X] \leq 1 \right\}
\end{align*}
Denote $\bar{\mathfrak{P}}_2^*$ as the set of probability measures on $([0,\lambda_{\max}], \mathcal{B}([0,\lambda_{\max}]))$ having a finite support of at most $2$ support points. Then, by 
Proposition 6.40 of \cite{moment_book},
we have
\begin{equation*}
    \sup_{\alpha\in \mathfrak{M}_{2}^*} \mathbb{E}_P[F(X)] = \sup_{\alpha\in \mathfrak{M}} \mathbb{E}_P[F(X)],
\end{equation*}
where $\mathfrak{M}_{2}^* = \mathfrak{M} \cap \bar{\mathfrak{P}}_2^*$.

Therefore, there exists an optimal solution of \eqref{eq:def Fstar} whose probability measure has at most two support points $0\leq x_2^\star\leq x_1^\star\leq \lambda_{\max}$, with $\alpha(x_1^\star)=p^\star$ and $\alpha(x_2^\star)=1-p^\star$ for some $p^\star\in [0,1]$.

Next we will show that 
\begin{itemize}
    \item if $F$ violates condition \eqref{eq:concave_like}, then the optimal support points satisfy $x_1^\star \in (1,\lambda_{\max}]$, $x_2^\star \in [0,1)$, and $p^\star = \frac{1 - x_2^\star}{x_1^\star - x_2^\star}$,
    \item if $F$ satisfies condition \eqref{eq:concave_like}, then the unique optimal solution is degenerate with $x_1^\star = x_2^\star = 1$ and $\alpha(\{1\}) = 1$.
\end{itemize}

\subsubsection{When $F$ violates \eqref{eq:concave_like}}\label{F_violate_(1)}
We aim to show that \eqref{eq:def Fstar} has at least one optimal solution with two support points. 

In this case, there exists $x_1\neq x_2$ and some $p \in (0,1)$ such that $p x_1 + (1-p) x_2 = 1$ and satisfying $F(1)\leq pF(x_1)+(1-p)F(x_2)$. Hence, either $F(1) < F^\star$, or $F(1) = F^\star$ but the maximizer is not unique. Moreover, by Assumption~\ref{assump4}, $F(x)<F^\star$ for all $x\in [0,1)$.

Thus, we conclude that \eqref{eq:def Fstar} admits a feasible two support points solution that achieves an objective value as large as any single-point solution. Combining this with 
Proposition 6.40 of \cite{moment_book}
guarantees that there exists an optimal solution of \eqref{eq:def Fstar} with exactly two support points, i.e., $\exists x_1^\star>x_2^\star$ and $\alpha(x_1^\star)=p^\star$ , $\alpha(x_2^\star)=1-p^\star$ for some $p^\star\in (0,1)$. We now establish the following properties:
\begin{itemize}
    \item $x_1^\star\in (1,\lambda_{\max}]$ and $x_2^\star\in [0,1)$.
    \item $F(x_1^\star)>F(x_2^\star)$
    \item $p^\star=\frac{1-x_2^\star}{x_1^\star-x_2^\star}$
\end{itemize}
For the first property, we can prove by contradiction: If $x_1^\star\leq 1$, $x_2^\star<1$, then by Assumption~\ref{assump4}, $F(x_1^\star)\leq F^\star$ and $F(x_2^\star)<F^\star$, so $p F(x_1^\star) + (1-p)F(x_2^\star) < F^\star$ for all $p\in (0,1)$, which contradicts optimality. On the other hand, if $x_1^\star>1$, $x_2^\star\geq 1$, then $\mathbb{E}_\alpha[X] = px_1^\star+(1-p)F(x_2^\star)>1$ for all $p\in (0,1)$, so it violates the feasibility condition. Therefore, we conclude that $x_1^\star\in (1,\lambda_{\max}]$ and $x_2^\star\in [0,1)$.

For the second property, also prove by contradiction. Suppose $F(x_1^\star)\leq F(x_2^\star)$ given $x_1^\star>x_2^\star$, then $p F(x_1^\star) + (1-p)F(x_2^\star) \leq F(x_2^\star)< F^\star$ since $x_2^\star<1$ implies $F(x_2^\star) < F^\star$.

For the last one, suppose $p^\star x_1^\star + (1-p^\star)x_2^\star<1$. Lets define a new probability measure  $\alpha(x_1^\star)=p' = p^\star+\delta$, where $\delta = \frac{1-p^\star x_1^\star - (1-p^\star)x_2^\star}{x_1^\star-x_2^\star}>0$, so that $p' x_1^\star + (1-p')x_2^\star=1$. Then
\begin{align}
    p'F(x_1^\star) + (1-p')F(x_2^\star) &= (p^\star+\delta)F(x_1^\star) + (1-p^\star-\delta)F(x_2^\star)\notag\\
    &= p^\star F(x_1^\star) + (1-p^\star)F(x_2^\star) + \delta(F(x_1^\star)-F(x_2^\star))\notag\\
    &\overset{(a)}{>}p^\star F(x_1^\star) + (1-p^\star)F(x_2^\star) \label{contra_opt}
\end{align}
where (a) follows from $F(x_1^\star)>F(x_2^\star)$. Equation \eqref{contra_opt} contradicts the optimality of $(p^\star, x_1^\star, x_2^\star)$. Therefore, the optimal solution must satisfying $p^\star x_1^\star + (1-p^\star)x_2^\star=1$, thus $p^\star=\frac{1-x_2^\star}{x_1^\star-x_2^\star}$.
\subsubsection{When $F$ satisfies \eqref{eq:concave_like}}
When $F$ satisfies \eqref{eq:concave_like}, then $F(1)>pF(x_1) + (1-p)x_2$ for all $p\in (0,1)$ and $x_1>x_2$ satisfying $px_1+(1-p)x_2=1$.

Consider any two support point solution for which $px_1+(1-p)x_2<1$. Using a similar argument as in Section~\ref{F_violate_(1)}, let $p'=p+\delta$ with $\delta = \frac{1-p x_1 - (1-p)x_2}{x_1-x_2}>0$, so that $p'x_1+(1-p')x_2=1$. Then 
\begin{align*}
    pF(x_1)+(1-p)F(x_2)<p'F(x_1)+(1-p')F(x_2)<F(1).
\end{align*}
Therefore, $F(1)> \mathbb{E}_\alpha[F(X)]$ for all non-constant random variables $X$. Moreover, by Assumption~\ref{assump4}, we know that $F(x)<F^\star$ for all $x\in [0,1)$, so $F^\star=F(1)$, and it can only be attained at $x=1$ with probability $\alpha(\{1\})=1$.

\subsection{Proof of connections to the Locally Polyhedral Condition \cite{huang2009delay}} \label{Polyhedral}
The paper \cite{huang2009delay} observes similar trade-off between queue length minimization and reward maximization as ours. To interpret their results in our notations, they propose a primal-dual policy that achieves queue lengths of $\Theta(\log 1/\varepsilon)$ when the locally polyhedral condition is satisfied and $\Theta(1/\sqrt{\varepsilon} \log 1/\varepsilon)$ when it is not while ensuring at most $\varepsilon$ regret. Note that we obtain the $\Theta(\log 1/\varepsilon)$ scaling mainly as we are able to impose a large negative drift $\Theta(1)$ towards the threshold $\tau = \Theta(\log 1/\varepsilon)$. Moreover, in the case of $\Theta(1/\sqrt{\varepsilon})$, we are only able to induce a negative drift of $\Theta(\sqrt{\varepsilon})$, close to the threshold $B$. This intuition seems to be consistent with the intuition in \cite{huang2009delay} for the emergence of such scalings.

However, the model of \cite{huang2009delay} is completely different than ours. We highlight three of the differences here. First, \cite{huang2009delay} allows the ``action'' to depend on the realized system noise, which is a much stronger assumption compared to our scenario. For example, in the context of dynamic pricing, this assumption would correspond to knowing the utility of the incoming customer exactly, while our framework would only assume a distribution over this utility. 
Second, the locally smooth condition in Equation~(42) of
\cite{huang2009delay} is assumed to obtain $\Theta(1/\sqrt{\varepsilon} \log 1/\varepsilon)$ queue lengths, while we do not make any such assumptions. Third, in \cite{huang2009delay}, the number of random states are finite, which would correspond to at most finite number of possible arrival rates in our setting, which is restrictive.

Now we start to prove Theorem~\ref{relative of 18eq}, we can write the fluid benchmark \eqref{eq:def Fstar} as:
\begin{equation} \label{primal}
\begin{aligned}
    \underset{\alpha\in \mathcal{P}([0,\lambda_{\max}])}{\text{min}} \quad & -\mathbb{E}_\alpha[F(X)] \\
    \text{ s.t. }\qquad & \mathbb{E}_\alpha[X] \leq 1.
\end{aligned}
\end{equation}
The dual problem of \eqref{primal} is:
\begin{equation}\label{dual}
    \begin{aligned}
        \max\quad  & q(U)\\
        \text{ s.t. }\quad  & U>0,
    \end{aligned}
\end{equation}
where the dual function $q(U)$ is defined as:
\begin{equation}\label{lagrange}
    q(U) = \inf_{\alpha\in \mathcal{P}([0,\lambda_{\max}])} \{\mathbb{E}_\alpha[-F(X)+UX]-U\}
\end{equation}
Since \eqref{lagrange} doesn't have any constraint, so we can write
\begin{equation}\label{dual objective}
    q(U) = \inf_{x\in [0,\lambda_{\max}]} \{-F(x)+Ux-U\}
\end{equation}
When \(F\) is not concave-like, from Lemma~\ref{lemma4.3}, we have
\begin{equation}\label{2 support}
F(x)\leq \frac{F(x_1^\star)-F(x_2^\star)}{x_1^\star-x_2^\star}x + \frac{x_1^\star F(x_2^\star)-x_2^\star F(x_1^\star)}{x_1^\star-x_2^\star}
\end{equation}
for all $x\in [0,\lambda_{\max}]$, and $0\leq x_1^\star<1<x_2^\star\leq \lambda_{\max}$. 

Moreover, the optimal solution of \eqref{primal} is $-F^\star$, where $F^\star = \frac{F(x_1^\star)-F(x_2^\star)}{x_1^\star-x_2^\star}x + \frac{x_1^\star F(x_2^\star)-x_2^\star F(x_1^\star)}{x_1^\star-x_2^\star}$.
\begin{lemma}
    The optimal solution of \eqref{dual objective} is $\max\limits_{U\geq 0} q(U) = -F^\star$, where $\arg \max\limits_{U\geq 0} q(U) = \frac{F(x_1^\star)-F(x_2^\star)}{x_1^\star-x_2^\star}$.
\end{lemma}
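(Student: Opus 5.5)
The plan is to prove the two inequalities $\max_{U\ge 0} q(U)\le -F^\star$ and $q(s)\ge -F^\star$, where
\[
s:=\frac{F(x_1^\star)-F(x_2^\star)}{x_1^\star-x_2^\star},
\]
and then show that every $U\neq s$ gives a strictly smaller value. Throughout, I write $\ell(x)=sx+b$ for the line through $(x_1^\star,F(x_1^\star))$ and $(x_2^\star,F(x_2^\star))$ appearing in \eqref{2 support}. One observation is used repeatedly. Since the optimal two-point measure satisfies $p^\star x_1^\star+(1-p^\star)x_2^\star=1$ (Proposition~\ref{prop:how_many_supp_point}), and $\ell$ is affine and agrees with $F$ at both support points, we have
\[
F^\star=p^\star F(x_1^\star)+(1-p^\star)F(x_2^\star)=\ell(1)=s+b .
\]
Also $s>0$, because $F(x_1^\star)>F(x_2^\star)$ was shown in Appendix~\ref{F_violate_(1)} for the ordering with the larger point above $1$. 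Hence $s$ is dual feasible.

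\textbf{Weak duality.} Fix any $U\ge 0$. The optimal measure $\alpha^\star$ is one admissible choice in the infimum \eqref{lagrange}, so
\[
q(U)\le \mathbb{E}_{\alpha^\star}[-F(X)+UX]-U=-F^\star+U\bigl(\mathbb{E}_{\alpha^\star}[X]-1\bigr)=-F^\star .
\]

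\textbf{Attainment at $U=s$.} Using the reduction \eqref{dual objective} and the upper bound $F\le \ell$ from \eqref{2 support}, for every $x\in[0,\lambda_{\max}]$,
\[
-F(x)+sx-s\;\ge\; -(sx+b)+sx-s=-(s+b)=-F^\star .
\]
Taking the infimum over $x$ gives $q(s)\ge -F^\star$. Combined with weak duality, $q(s)=-F^\star=\max_{U\ge0}q(U)$.

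\textbf{Uniqueness of the maximizer.} Take $U\neq s$ and evaluate the infimum in \eqref{dual objective} at a support point $x_i^\star$. Since $F(x_i^\star)=\ell(x_i^\star)$,
\[
q(U)\le -F(x_i^\star)+Ux_i^\star-U=-F^\star+(U-s)(x_i^\star-1).
\]
If $U>s$, I pick the support point below $1$; if $U<s$, I pick the one above $1$. In either case the last term is strictly negative, so $q(U)<-F^\star$ and the argmax is exactly $s$. This bound is in fact linear in $|U-s|$, with constant $\min_i|x_i^\star-1|>0$, which already foreshadows Theorem~\ref{relative of 18eq}.

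\textbf{Main obstacle.} The only delicate point is bookkeeping. I need to justify $F^\star=\ell(1)$, which relies on the moment constraint being tight at the optimum, and $s>0$. I also need to handle the inconsistent labeling of $x_1^\star,x_2^\star$ in \eqref{2 support}, which is why I phrase everything in terms of "the support point above $1$" and "the support point below $1$". Everything else is direct.
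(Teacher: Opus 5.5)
Your proposal is correct and follows the paper's argument: the chord bound $F\le \ell$ from \eqref{2 support} gives $q(s)\ge -F^\star$, and weak duality gives the matching upper bound. You check weak duality directly by plugging $\alpha^\star$ into \eqref{lagrange}, whereas the paper evaluates the infimum at the support points and then cites weak duality. Your uniqueness step, which gives $q(U)\le -F^\star+(U-s)(x_i^\star-1)<-F^\star$ using the support point on the appropriate side of $1$, is the argument the paper defers to the proof of Theorem~\ref{relative of 18eq}; including it here fully justifies the lemma's claim about the argmax.
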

\begin{proof}
    \begin{align*}
        &q\left(\frac{F(x_1^\star)-F(x_2^\star)}{x_1^\star-x_2^\star}\right) = \inf_{x\in [0,\lambda_{\max}]} \left\{-F(x)+\frac{F(x_1^\star)-F(x_2^\star)}{x_1^\star-x_2^\star}x-\frac{F(x_1^\star)-F(x_2^\star)}{x_1^\star-x_2^\star}\right\}\\
        &\overset{(a)}{\geq} \inf_{x\in [0,\lambda_{\max}]} \left\{-\frac{F(x_1^\star)-F(x_2^\star)}{x_1^\star-x_2^\star}x - \frac{x_1^\star F(x_2^\star)-x_2^\star F(x_1^\star)}{x_1^\star-x_2^\star}+\frac{F(x_1^\star)-F(x_2^\star)}{x_1^\star-x_2^\star}x-\frac{F(x_1^\star)-F(x_2^\star)}{x_1^\star-x_2^\star}\right\}\\
        &= -F^\star
    \end{align*}
\end{proof}
where (a) is derived from \eqref{2 support}, and for $i=1,2$, we have
\begin{align*}
    &q\left(\frac{F(x_1^\star)-F(x_2^\star)}{x_1^\star-x_2^\star}\right) = \inf_{x\in [0,\lambda_{\max}]} \left\{-F(x)+\frac{F(x_1^\star)-F(x_2^\star)}{x_1^\star-x_2^\star}x-\frac{F(x_1^\star)-F(x_2^\star)}{x_1^\star-x_2^\star}\right\}\\
    &\leq  -F(x_i^\star)+\frac{F(x_1^\star)-F(x_2^\star)}{x_1^\star-x_2^\star}x_i^\star-\frac{F(x_1^\star)-F(x_2^\star)}{x_1^\star-x_2^\star} \\
    &= -\frac{F(x_1^\star)-F(x_2^\star)}{x_1^\star-x_2^\star}x_i^\star - \frac{x_1^\star F(x_2^\star)-x_2^\star F(x_1^\star)}{x_1^\star-x_2^\star}+\frac{F(x_1^\star)-F(x_2^\star)}{x_1^\star-x_2^\star}x_i^\star-\frac{F(x_1^\star)-F(x_2^\star)}{x_1^\star-x_2^\star}\\
    &=-F^\star
\end{align*}
Therefore, $q\left(\frac{F(x_1^\star)-F(x_2^\star)}{x_1^\star-x_2^\star}\right)=-F^\star$, and by weak duality, $\max\limits_{U\geq 0} q(U) \leq -F^\star$, so we have $\max\limits_{U\geq 0} q(U) = q\left(\frac{F(x_1^\star)-F(x_2^\star)}{x_1^\star-x_2^\star}\right) = -F^\star$.

\begin{theorem}
    Let $U^\star = \frac{F(x_1^\star)-F(x_2^\star)}{x_1^\star-x_2^\star}$, and $L = \min\{x_1^\star-1, 1-x_2^\star\}>0$, such that
    \begin{equation}\label{paper eq18}
        q(U^\star) \geq q(U) +L\lvert U^\star-U\rvert
    \end{equation}
    for all $U\geq 0$, where $q(U^\star) = \max\limits_{U\geq 0} q(U) = -F^\star$
\end{theorem}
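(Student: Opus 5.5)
The plan is to bound the dual function $q$ from above by evaluating the infimum in \eqref{dual objective} at a single well-chosen point, depending on the sign of $U-U^\star$. Throughout, write $x_1^\star\in(1,\lambda_{\max}]$ and $x_2^\star\in[0,1)$ for the two support points of Proposition~\ref{prop:how_many_supp_point}. We have $p^\star=\frac{1-x_2^\star}{x_1^\star-x_2^\star}$, so that $F^\star=p^\star F(x_1^\star)+(1-p^\star)F(x_2^\star)$.

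The key identity is that the chord through $(x_1^\star,F(x_1^\star))$ and $(x_2^\star,F(x_2^\star))$ equals $F^\star+U^\star(x-1)$. This holds because the chord has slope $U^\star$ and passes through $(1,F^\star)$, since $1=p^\star x_1^\star+(1-p^\star)x_2^\star$. Consequently, for $i=1,2$,
\begin{equation*}
-F(x_i^\star)+U x_i^\star-U=-F^\star+(U-U^\star)(x_i^\star-1).
\end{equation*}
By the preceding lemma, $q(U^\star)=-F^\star$.

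For the upper bound, note that $q(U)\le -F(x)+Ux-U$ for every $x\in[0,\lambda_{\max}]$. I would treat the two signs of $U-U^\star$ separately.

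\emph{Case $U\ge U^\star$.} Take $x=x_2^\star$. Since $x_2^\star-1=-(1-x_2^\star)$, the identity gives
\begin{equation*}
q(U)\le -F^\star-(1-x_2^\star)(U-U^\star)\le q(U^\star)-L\,|U-U^\star|.
\end{equation*}

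\emph{Case $0\le U<U^\star$.} Take $x=x_1^\star$. The identity gives
\begin{equation*}
q(U)\le -F^\star+(U-U^\star)(x_1^\star-1)=q(U^\star)-(x_1^\star-1)|U-U^\star|\le q(U^\star)-L\,|U-U^\star|.
\end{equation*}

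Rearranging either case yields \eqref{paper eq18} with $L=\min\{x_1^\star-1,1-x_2^\star\}$. This $L$ is strictly positive precisely because $x_1^\star>1>x_2^\star$, which is exactly where the failure of \eqref{eq:concave_like} enters through Proposition~\ref{prop:how_many_supp_point}.

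The main obstacle is not this computation but checking the prerequisites for $U^\star$. I will verify that $U^\star\ge0$, which follows from $F(x_1^\star)>F(x_2^\star)$ (shown in the proof of Proposition~\ref{prop:how_many_supp_point}). I will also verify that $q(U^\star)=-F^\star$, which needs the supporting-line inequality \eqref{2 support}, $F(x)\le F^\star+U^\star(x-1)$ for all $x$. If Lemma~\ref{lemma4.3} is unavailable, I would prove this directly by contradiction. Suppose some $x_0$ lies strictly above the chord. If $x_0>1$, replace $x_1^\star$ by $x_0$ and rebalance the weights so the mean is still $1$; if $x_0<1$, replace $x_2^\star$ by $x_0$ in the same way; if $x_0=1$, use the Dirac mass at $1$. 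In each case the resulting measure is feasible for \eqref{eq:def Fstar} and has objective strictly larger than $F^\star$, contradicting the optimality of $\alpha^\star$.
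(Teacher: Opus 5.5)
Your proof is correct and matches the paper's argument. Like the paper, you bound the infimum defining $q(U)$ by evaluating it at $x_1^\star$ when $U<U^\star$ and at $x_2^\star$ when $U>U^\star$, using that the chord through the two support points has slope $U^\star$ and passes through $(1,F^\star)$, and you take $q(U^\star)=-F^\star$ from the supporting-line inequality of Lemma~\ref{lemma4.3}, whose proof by contradiction you reconstruct the same way the paper does.
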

\begin{proof}
    When $U = U^\star$, trivial.

    When $0\leq U<U^\star$, 
    \begin{align*}
        q(U) &= \inf_{x\in [0,\lambda_{\max}]}\{ -F(x)+Ux-U\} \\
        &\leq -F(x_1^\star) + Ux_1^\star-U \\
        &= -F(x_1^\star) + U^\star (x_1^\star - 1) + (U-U^\star)(x_1^\star-1) \\
        &= q(U^\star) - (x_1^\star-1)(U^\star-U),
    \end{align*}
hence $q(U^\star) \geq q(U) + (x_1^\star-1)(U^\star-U) \geq q(U) + L(U^\star-U)$.

Similarly, when $U>U^\star$,
\begin{align*}
    q(U) &= \inf_{x\in [0,\lambda_{\max}]}\{ -F(x)+Ux-U\} \\
        &\leq -F(x_2^\star) + Ux_2^\star-U \\
        &= -F(x_2^\star) + U^\star(x_2^\star-1) + (U-U^\star)(x_2^\star-1) \\
        &= q(U^\star) - (1-x_2^\star)(U-U^\star)
\end{align*}
hence $q(U^\star) \geq q(U) + (1-x_2^\star)(U-U^\star) \geq q(U) + L(U-U^\star)$.

Above all, \eqref{paper eq18} hold for all $U\geq 0$.
\end{proof}
Here we finished the proof of Theorem~\ref{relative of 18eq}.

\section{Continue the proof in Section ~\ref{sec:qstar_upper_bound}} \label{longest_proof}
\begin{align*}
    &\mathbb{E}_\pi[F(\lambda(\bar q))] = \sum_{i=0}^{2B}F(\lambda(i))\pi_i\\
    ={}& \pi_0\left(\sum_{i=1}^{B-1}F\left(\left(1+\frac{1}{m+i+1}\right)^2\right)\left(\frac{m+i+1}{m+1}\right)^2 + \sum_{i=B+1}^{2B-1}F\left(\left(1-\frac{1}{m+2B-i+1}\right)^2\right)\right.\\ &\left.\left(\frac{m+2B-i+1}{m+1}\right)^2\right)
    +\pi_0F\left(\left(1-\frac{1}{m+B+1}\right)^2\right)\left(\frac{m+B+1}{m+1}\right)^2 + \pi_0 F\left(\left(1+\frac{1}{m+1}\right)^2\right) + \pi_{2B}F(0)\\
    ={}& \pi_0\left(\sum_{i=1}^{B-1}F\left(\left(1+\frac{1}{m+i+1}\right)^2\right)\left(\frac{m+i+1}{m+1}\right)^2 + \sum_{i=1}^{B-1}F\left(\left(1-\frac{1}{m+i+1}\right)^2\right)\left(\frac{m+i+1}{m+1}\right)^2\right)\\
    &+\pi_0F\left(\left(1-\frac{1}{m+B+1}\right)^2\right)\left(\frac{m+B+1}{m+1}\right)^2 + \pi_0 F\left(\left(1+\frac{1}{m+1}\right)^2\right)+\pi_0 F(0)\\
    \overset{(c)}{\geq}& \pi_0\sum_{i=1}^{B-1}\left(2F(1)+\frac{2F'(1)+4F''(1)}{(m+i+1)^2}+\frac{3F''(1)+12F^{(3)}(1)-4K}{3(m+i+1)^4}+\frac{F^{(3)}(1)-6K}{3(m+i+1)^6}\right.\\ &\left.-\frac{K}{12(m+i+1)^8}\right)
    \cdot\left(\frac{m+i+1}{m+1}\right)^2
    +\pi_0\left(F(1)+F'(1)\left(-\frac{2}{m+B+1}+\frac{1}{(m+B+1)^2}\right)  + \frac{F''(1)}{2}\right.\\ &\left.\left(-\frac{2}{m+B+1}+\frac{1}{(m+B+1)^2}\right)^2 + \frac{M}{6}\left(-\frac{2}{m+B+1}+\frac{1}{(m+B+1)^2}\right)^3\right)\cdot  \left(\frac{m+B+1}{m+1}\right)^2 
    + \pi_0 \left(\right.\\ &\left.F(1)+F'(1)\left(\frac{2}{m+1}+\frac{1}{(m+1)^2}\right)+\frac{F''(1)}{2}\left(\frac{2}{m+1}+\frac{1}{(m+1)^2}\right)^2-\frac{M}{6}\left(\frac{2}{m+1}+\frac{1}{(m+1)^2}\right)^3\right)
    \\
    \overset{(d)}\geq{}& \frac{\pi_0}{(m+1)^2}\left(2\sum_{i=2}^{B}(m+i)^2F(1)+(2F'(1)+4F''(1))(B-1)-C\right) 
    +\frac{\pi_0}{(m+1)^2}
    \left((m+B+1)^2\right.\\ &\left.F(1)-(2m+2B+1)F'(1)+\left(2-\frac{2}{m+B+1}+\frac{1}{2(m+B+1)^2}\right)F''(1)-\frac{M}{6}\frac{(2m+2B+1)^3}{(m+B+1)^4}\right)\\
    &+ \frac{\pi_0}{(m+1)^2} \left(F(1)+F'(1)\left(2m+3\right)+F''(1)\left(2+\frac{2}{m+1}+\frac{1}{2(m+1)^2}\right)-\frac{M}{6}\frac{(2m+3)^3}{(m+1)^4}\right) \\
    ={}& F(1)(1-\pi_0) + \frac{\pi_0}{(m+1)^2}F'(1)\left(2B-2-2m-2B-1+2m+3\right) + \frac{\pi_0}{(m+1)^2}F''(1)\left(4B-4+2\right.\\ &\left.-\frac{2}{m+B+1}+\frac{1}{2(m+B+1)^2}+2+\frac{2}{m+1}+\frac{1}{2(m+1)^2}\right) - \frac{\pi_0}{(m+1)^2}C\\
    &-\frac{\pi_0}{(m+1)^2}\frac{M}{6}\left(\frac{(2m+2B+1)^3}{(m+B+1)^4}+\frac{(2m+3)^3}{(m+1)^4}\right)
    \\
    \geq& F(1)(1-\pi_0) + \frac{\pi_0}{(m+1)^2}F''(1)\left(4B-\frac{2}{m+B+1}+\frac{1}{2(m+B+1)^2}+\frac{2}{m+1}+\frac{1}{2(m+1)^2}\right)\\
    &-\frac{\pi_0}{(m+1)^2}C-\frac{\pi_0}{(m+1)^2}\frac{M}{6}\left(\frac{8}{m+B+1}+\frac{27}{m+1}\right)\\
    \overset{(e)}{=} & F(1)-\frac{3(m+1)^2F(1)}{3(2B+1)m^2 + 3(B+1)^2(2m+1)+B(B+1)(2B+1)}\\
    &+ \frac{12B F''(1)}{3(2B+1)m^2+3(B+1)^2(2m+1)+B(B+1)(2B+1)}\\
    &+ \frac{3}{3(2B+1)m^2+3(B+1)^2(2m+1)+B(B+1)(2B+1)} \cdot O(1)
    \\
    \overset{(f)}={}& F(1) - \frac{-12B F''(1)}{2B^3} + O\left(\frac{1}{B^3}\right)
    = F(1) - \frac{-6F''(1)}{B^2} + o\left(\frac{1}{B^2}\right) \overset{(g)}\geq F(1)- \frac{6}{7}\varepsilon + o(\varepsilon)\overset{(h)}\geq F(1)-\varepsilon, 
\end{align*}

where (c) is obtained by applying the fourth-order Taylor expansion to each pair $F((1+\frac{1}{i+1})^2), F((1-\frac{1}{i+1})^2)$, and applying the third-order Taylor expansion to the remaining two terms, under Assumption~\ref{eq:smooth_F}. Next, (d) follows by observing that, after multiplication by
$(m+i+1)^2$, the remaining terms in the paired Taylor expansions
are linear combinations of
$(m+i+1)^{-2}$, $(m+i+1)^{-4}$, and $(m+i+1)^{-6}$.
Their absolute values are uniformly summable in $m$ and $B$, since
\(
\sum_{i=1}^{B-1}\frac{1}{(m+i+1)^2}
\le
\sum_{i=1}^{\infty}\frac{1}{i^2}
=
\frac{\pi^2}{6}.
\)
Hence, there exists a constant $C>0$, independent of $m$ and $B$,
such that their total contribution is bounded below by $-C$. Then, (e) follows by substituting \eqref{plug_pi_0} and collecting the
remaining terms into $O(1)$. Indeed, since $m\ge 0$, all the
remaining terms involving $m$ and $B$ are uniformly bounded, and
the implicit constant is independent of $m$ and $B$.

The inequality (f) follows from
\(
m=\left\lceil
\frac{1}{\sqrt{\bar\lambda_{\max}}-1}
\right\rceil-1
\le
\left\lceil
\frac{1}{\sqrt{1+\varepsilon^\alpha}-1}
\right\rceil-1
=
2\varepsilon^{-\alpha}+O(1).
\)
Since \(0<\alpha<\frac14\), it follows that
\(
(m+1)^2=O(\varepsilon^{-2\alpha})
=o(\varepsilon^{-1/2})
=o(B).
\)

Finally (g) is by $B=\left\lceil\frac{\sqrt{-7F''(1)}}{\sqrt{\varepsilon}}\right\rceil\geq \frac{\sqrt{-7F''(1)}}{\sqrt{\varepsilon}}$, and (h) is hold when $\varepsilon$ is sufficiently small.

Note that the condition \(\alpha<\frac14\) is required for the above argument. Indeed, when
\(\alpha\ge\frac14\), we have
\(
\frac{(m+1)^2}{B^3}\not=o(\varepsilon).
\)
Hence, the loss associated with the coefficient of \(F(1)\) is no longer negligible relative to \(\varepsilon\), and the above argument does not guarantee the regret constraint.

\section{}\label{appendix:Proof for 5.2}
\subsection{When \(F\) is not concave-like}\label{subsection:2pointanalysis}
In this section, we present a detailed proof of Theorem~\ref{general:2 supports} , which establishes the lower and upper bound of $q^\star$.
The following Lemma~\ref{lemma4.3} shows that $F(x)$ is always less than or equal to the straight line passing through $(\opt_1, F(\opt_1)), (\opt_2, F(\opt_2))$, as illustrated in Fig.~\ref{fig:two points}.

\begin{lemma}\label{lemma4.3}

Suppose $\opt_1 \in (1, \lambda_{\max}], \opt_2 \in [0, 1), p^\star \in (0, 1)$ such that $\alpha(\{\opt_1\})=p^\star$ and $\alpha(\{\opt_2\})=1-p^\star$ is an optimal solution of \eqref{eq:def Fstar}, then $F(x)\leq \frac{F(\opt_1)-F(\opt_2)}{\opt_1-\opt_2}x+\frac{\opt_1 F(\opt_2)-\opt_2 F(\opt_1)}{\opt_1-\opt_2}$ for all $x\in [0,\lambda_{\max}]$.
\end{lemma}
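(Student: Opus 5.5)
The natural route is a perturbation argument against the optimality of the two-point measure $\alpha^\star$. Write $\ell(x) := \frac{F(\opt_1)-F(\opt_2)}{\opt_1-\opt_2}x+\frac{\opt_1 F(\opt_2)-\opt_2 F(\opt_1)}{\opt_1-\opt_2}$ for the chord through $(\opt_1,F(\opt_1))$ and $(\opt_2,F(\opt_2))$. By Proposition~\ref{prop:how_many_supp_point} (and its proof in Appendix~\ref{opt_sol_large_market}) we know $p^\star \opt_1+(1-p^\star)\opt_2=1$ and $F(\opt_1)>F(\opt_2)$. Hence $\ell$ has positive slope and $F^\star = p^\star F(\opt_1)+(1-p^\star)F(\opt_2)=\ell(1)$. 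I will argue by contradiction: suppose some $y\in[0,\lambda_{\max}]$ satisfies $F(y)>\ell(y)$. The goal is then to build a feasible measure for \eqref{eq:def Fstar} with objective strictly larger than $F^\star$.

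The construction splits into cases by the location of $y$.

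\emph{Case $y\in(\opt_2,\opt_1)$.} Then $y$ lies strictly between the two support points. If $y\le 1$, pair $y$ with $\opt_1$; if $y\ge 1$, pair $y$ with $\opt_2$. In either case take the unique weight $q\in(0,1]$ that makes the mean exactly $1$. The new measure is feasible. Because $\ell$ is affine, its objective is the corresponding convex combination of $F$-values, which strictly exceeds the same combination of $\ell$-values, namely $\ell(1)=F^\star$. This is a contradiction. The subcase $y=1$ is simply the Dirac mass at $1$, where $F(1)>\ell(1)=F^\star$ already contradicts optimality.

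\emph{Case $y\ge \opt_1$ (so $y>1$).} Mix $y$ with $\opt_2$ using weight $q=(1-\opt_2)/(y-\opt_2)$, so the mean is $1$. The objective is $qF(y)+(1-q)F(\opt_2) > q\ell(y)+(1-q)\ell(\opt_2)=\ell(1)=F^\star$, again using $F(\opt_2)=\ell(\opt_2)$ and affinity of $\ell$.

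\emph{Case $y\le\opt_2$ (so $y<1$).} Symmetrically, mix $y$ with $\opt_1$ at the mean-one weight, and the same computation gives a contradiction.

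In every case the key identity is that any mean-one mixture of points at which $F\ge\ell$, with at least one strict inequality, has objective strictly greater than $\ell(\text{mean})=\ell(1)=F^\star$. The one thing to verify in each case is that the chosen partner lies on the opposite side of $1$ from $y$, which is what makes a valid mean-one mixture with weight in $(0,1)$ exist.

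There is no real analytic obstacle here; the main care is bookkeeping. The boundary cases $y=\opt_i$ are excluded automatically, since there $F=\ell$. The subcase $y=1$ is handled separately by the Dirac mass. Finally, one must confirm that each new measure is supported in $[0,\lambda_{\max}]$, which holds because all points used lie in that interval. The inequality is then established for all $x$.
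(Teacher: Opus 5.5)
Your proposal is correct and is essentially the paper's proof. Assume $F(x')>\ell(x')$, dispose of $x'=1$ via the Dirac mass since $F^\star=\ell(1)$, and otherwise mix $x'$ at the mean-one weight with the support point on the opposite side of $1$ to exceed $F^\star$. Your finer split relative to $\opt_1,\opt_2$ collapses to the paper's split by the side of $1$: $x'>1$ is paired with $\opt_2$ and $x'<1$ with $\opt_1$.
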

\begin{lemma}\label{pi_0<=sth}
    The throughput $\mathbb{E}_\pi[\lambda(\bar q)] = 1-\pi_0\geq 1-\frac{\opt_1-\opt_2}{F(\opt_1)-F(\opt_2)}\varepsilon$ for all policy $\lambda$ satisfying $R(\lambda)\leq \varepsilon$, where $\opt_1, \opt_2$ is defined in Lemma~\ref{lemma4.3}.
\end{lemma}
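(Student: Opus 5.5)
The proof combines the linear upper bound of Lemma~\ref{lemma4.3} with the rate-balance identity \eqref{idle time}, in the same way as the small-market argument of Proposition~\ref{prop small market}. Write $U^\star := \frac{F(\opt_1)-F(\opt_2)}{\opt_1-\opt_2}$ for the slope of the chord through $(\opt_1,F(\opt_1))$ and $(\opt_2,F(\opt_2))$. Denote its intercept by $b := \frac{\opt_1 F(\opt_2)-\opt_2 F(\opt_1)}{\opt_1-\opt_2}$, so that Lemma~\ref{lemma4.3} reads $F(x)\le U^\star x + b$ for all $x\in[0,\lambda_{\max}]$.

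\textbf{Step 1: identify $F^\star$ and the sign of the slope.} Since $\alpha(\{\opt_1\})=p^\star$, $\alpha(\{\opt_2\})=1-p^\star$ is optimal, and (as shown in Appendix~\ref{F_violate_(1)}) $p^\star\opt_1+(1-p^\star)\opt_2=1$, we get
\[
F^\star = p^\star F(\opt_1)+(1-p^\star)F(\opt_2) = U^\star\bigl(p^\star\opt_1+(1-p^\star)\opt_2\bigr)+b = U^\star + b .
\]
The appendix also establishes $F(\opt_1)>F(\opt_2)$ and $\opt_1>\opt_2$, so $U^\star>0$.

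\textbf{Step 2: bound the reward linearly in throughput.} Fix any stable policy $\lambda$ with $R(\lambda)\le\varepsilon$. Applying Lemma~\ref{lemma4.3} pointwise at $x=\lambda(q)$ and averaging over $\pi$ gives
\[
F^\star-\varepsilon \le \mathbb{E}_\pi[F(\lambda(\bar q))] \le U^\star\,\mathbb{E}_\pi[\lambda(\bar q)] + b = U^\star(1-\pi_0)+b = F^\star - U^\star\pi_0,
\]
where the middle equality uses \eqref{idle time}, $\mathbb{E}_\pi[\lambda(\bar q)]=1-\pi_0$. Hence $U^\star\pi_0\le\varepsilon$. Dividing by $U^\star>0$ yields $\pi_0\le \frac{\opt_1-\opt_2}{F(\opt_1)-F(\opt_2)}\varepsilon$, which is equivalent to the claimed lower bound on $1-\pi_0$.

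\textbf{Main obstacle.} The only delicate point is ensuring the chord bound is applied with the correct constants, namely that $F^\star$ equals the chord evaluated at $1$. This relies on the optimal two-point measure having mean exactly $1$, which is the third property proved in Appendix~\ref{F_violate_(1)}, and on the strict positivity of $U^\star$. Once these are in place, the argument is a one-line averaging.
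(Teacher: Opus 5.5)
Your proposal is correct and is essentially the paper's argument: you apply the chord bound of Lemma~\ref{lemma4.3} pointwise at each arrival rate, average under $\pi$, use $F^\star = U^\star + b$ (which holds because the optimal two-point measure has mean exactly $1$), and finish with the rate-balance identity $\mathbb{E}_\pi[\lambda(\bar q)] = 1-\pi_0$. The only difference is cosmetic: the paper first inverts the chord inequality into $\lambda \ge (F(\lambda)-b)/U^\star$ and then averages, whereas you average first and divide at the end, which has the minor advantage of making explicit the positivity of $U^\star$ that the paper uses tacitly.
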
 

We defer the proof of Lemma~\ref{lemma4.3} and Lemma~\ref{pi_0<=sth} in Section~\ref{sec6_line} and Section~\ref{sec6_pi0}, respectively.

We are now ready to prove the upper bound of \(q^\star\) in Theorem~\ref{general:2 supports}.

\subsubsection{Proof of the upper bound of \(q^\star\) in Theorem~\ref{general:2 supports}}
\begin{proof}
Recall the control policy defined in \eqref{eq:control_policy_two_supp_point}, wherein $\lambda(q)=\opt_1$ when $q\leq\tau$ and $\lambda(q)=\opt_2$ when $q>\tau$, so
\begin{equation}\label{eq:distribution}
    \pi_i = \begin{cases}
        \pi_0(\opt_1)^i, \quad &i\leq \tau\\
        \pi_0(\opt_1)^{\tau+1}(\opt_2)^{i-\tau-1}, \quad &i\geq \tau+1.
    \end{cases}
\end{equation}

Let $\tau = \left\lceil\log_{\opt_1} \frac{C}{\varepsilon}\right\rceil$ with $C = F(\opt_1)-F(\opt_2) + 2\sqrt{F(\opt_1)-F(\opt_2)}$. Then, we have
\begin{align*}
    \pi_0 = \left(\frac{(\opt_1)^{\tau+1}}{1-\opt_2} + \frac{(\opt_1)^{\tau+1}-1}{\opt_1-1}\right)^{-1}
\end{align*}
which then implies
\begin{align*}
    \sum_{i=0}^\tau \pi_i &= \left(1 + \frac{\opt_1-1}{(1-\opt_2)(1-(\opt_1)^{-\tau-1})}\right)^{-1} \overset{(a)}{\geq} \left(1 + \frac{\opt_1-1}{(1-\opt_2)(1-\varepsilon/C)}\right)^{-1} \\
    &= \frac{1-\opt_2-\varepsilon(1-\opt_2)/C}{\opt_1-\opt_2 - \varepsilon(1-\opt_2)/C} \overset{(b)}{\geq} \left(p^\star - \frac{\varepsilon p^\star}{C}\right)\left(1+\frac{\varepsilon p^\star}{C}\right) = p^\star -\frac{\varepsilon p^\star (1-p^\star)}{C} - \frac{\varepsilon^2 (p^\star)^2}{C^2} \\ &\overset{(c)}{\geq} p^\star - \frac{\varepsilon}{4C} - \frac{\varepsilon^2}{C^2} \overset{(d)}{\geq} p^\star - \frac{\varepsilon}{F(\opt_1)-F(\opt_2)}, \numberthis \label{eq:lower_bound_pi_tau}
\end{align*}
where $(a)$ follows as $(\opt_1)^{-\tau-1} \leq (\opt_1)^{\log_{\opt_1} C/\varepsilon} = \varepsilon / C$. Next, $(b)$ holds by noting that $p^\star = (1-\opt_2)/(\opt_1-\opt_2)$ as $p^\star\opt_1 + (1-p^\star)\opt_2 = 1$, and $1/(1-x) \geq 1+x$ for $x \geq 0$. Further, $(c)$ follows as $p^\star \in (0, 1)$. Lastly, $(d)$ holds by noting that $C \geq \max\{F(\opt_1)-F(\opt_2), 2\sqrt{F(\opt_1)-F(\opt_2)}\}$. Now, we get

\begin{equation*}
\begin{aligned}
    \mathbb{E}_\pi[F(\lambda(\bar q))] &= F(\opt_1)\sum_{i=0}^\tau\pi_i + F(\opt_2)\sum_{i=\tau+1}^\infty\pi_i\\
    &\overset{\eqref{eq:lower_bound_pi_tau}}\geq F(\opt_1)\left(p^\star-\frac{\varepsilon}{F(\opt_1)-F(\opt_2)}\right) + F(\opt_2)\left(1-p^\star+\frac{\varepsilon}{F(\opt_1)-F(\opt_2)}\right) \\
    &= F(\opt_1)p^\star + F(\opt_2)(1-p^\star) - \varepsilon
    = F^\star-\varepsilon,
\end{aligned}
\end{equation*}
which implies $R(\lambda)\geq \varepsilon$, indicates our policy is a feasible solution of \eqref{eq:constraint}. 

Since 
\begin{align*}
    \sum_{i=0}^\tau \pi_i = \frac{(\opt_1)^{\tau+1}-1}{\opt_1-1}\pi_0 \leq p^\star,
\end{align*}
we get
\begin{align*}
    q^\star &\leq \sum_{i=0}^{\tau}i\pi_i + \sum_{i=\tau+1}^\infty i\pi_0(\opt_1)^{\tau+1} (\opt_2)^{i-\tau-1} 
    \leq \tau p^\star + \pi_0(\opt_1)^{\tau+1} \left(\frac{\tau+1}{1-\opt_2}+\frac{\opt_2}{(1-\opt_2)^2}\right) \\
    &\leq \tau p^\star + \left(p^\star(\opt_1-1)+\pi_0\right) \left(\frac{\tau+1}{1-\opt_2}+\frac{\opt_2}{(1-\opt_2)^2}\right) \\
    &\overset{(a)}{=} \tau + \pi_0\left(\frac{\tau+1}{1-\opt_2}+\frac{\opt_2}{(1-\opt_2)^2}\right) + p^\star(\opt_1-1)\left(\frac{1}{1-\opt_2}+\frac{\opt_2}{(1-\opt_2)^2}\right) \\
    &\overset{(b)}{=} \left\lceil\log_{\opt_1} \frac{C}{\varepsilon}\right\rceil \left(1+o_\varepsilon(1)\right),
\end{align*}
where $(a)$ follows as $p^\star = (1-\opt_2)/(\opt_1-\opt_2)$ and $(b)$ holds as $\pi_0 = O(\varepsilon)$ by Lemma~\ref{pi_0<=sth}.
\end{proof}

\subsubsection{Proof of the lower bound of \(q^\star\) in Theorem~\ref{general:2 supports}}
As a warm up example, consider the case where $F(x)=x$, for which $R(\lambda)= \varepsilon$, i.e., $\mathbb{E}_\pi [\lambda(\bar q)] = \mathbb{E}_\pi [F(\lambda(\bar q))]=1-\varepsilon$. Then $\pi_0 = 1-\mathbb{E}_\pi [\lambda(\bar q)]= \varepsilon$. Since $\lambda(q) \leq \lambda_{\max}$, we conclude that $\pi_i \leq \lambda_{\max}^i \pi_0 = \lambda_{\max}^i \varepsilon$, which means the stationary distribution increases at most exponentially with $i$, and consequently, $\mathbb{E}[\bar q] = \Theta\left(\log_{\lambda_{\max}} \frac{1}{\varepsilon}\right)$. We now generalize this argument to all reward functions $F$ that do not satisfy \eqref{eq:concave_like}.

\begin{proof}
Fix a control policy $\lambda$ satisfying $R(\lambda) \leq \varepsilon$. Then, by Lemma ~\ref{pi_0<=sth}, we must have $\pi_0\leq (\opt_1-\opt_2)\varepsilon / (F(\opt_1)-F(\opt_2))$, which implies
\begin{equation}\label{eq:opt for 2 points}
\begin{aligned}
    q^\star&\geq \min\,\, \mathbb{E}[\bar q]=\sum_{i=0}^\infty i\pi_i
    \quad \text{s.t.}\,\,\pi_0\leq \frac{\opt_1-\opt_2}{F(\opt_1)-F(\opt_2)}\varepsilon, \\
    &\text{ and }\pi_{i+1}\leq \lambda_{\max}\pi_i \,\, \forall i\geq 0,\quad
    \sum_{i=0}^\infty \pi_i=1.
\end{aligned}
\end{equation}
where $q^\star$ is the optimal objective of \eqref{eq:constraint}. The optimal solution of \eqref{eq:opt for 2 points} achieved at 
\begin{equation*}
    \hat{\pi_i} = \begin{cases}
        \frac{\opt_1-\opt_2}{F(\opt_1)-F(\opt_2)}\varepsilon \lambda_{\max}^i, &\quad i\leq B\\
        1 - \sum_{i=0}^B \frac{\opt_1-\opt_2}{F(\opt_1)-F(\opt_2)}\varepsilon \lambda_{\max}^i, &\quad i=B+1\\
        0, &\quad i\geq B+2
    \end{cases}
\end{equation*}
where $B$ is the threshold satisfying $\sum_{i=0}^B \frac{\opt_1-\opt_2}{F(\opt_1)-F(\opt_2)}\varepsilon \lambda_{\max}^i\leq 1<\sum_{i=0}^{B+1}\frac{\opt_1-\opt_2}{F(\opt_1)-F(\opt_2)}\varepsilon \lambda_{\max}^i$.  

Hence we get
\begin{align}
        \sum_{i=0}^B \frac{\opt_1-\opt_2}{F(\opt_1)-F(\opt_2)}\varepsilon \lambda_{\max}^i\leq 1<\sum_{i=0}^{B+1}\frac{\opt_1-\opt_2}{F(\opt_1)-F(\opt_2)}\varepsilon \lambda_{\max}^i \notag\\
        \sum_{i=0}^B \lambda_{\max}^i \leq\frac{F(\opt_1)-F(\opt_2)}{(\opt_1-\opt_2)\varepsilon}< \sum_{i=0}^{B+1} \lambda_{\max}^i\notag\\
        \frac{\lambda_{\max}^{B+1}-1}{\lambda_{\max}-1} \leq\frac{F(\opt_1)-F(\opt_2)}{(\opt_1-\opt_2)\varepsilon}< \frac{\lambda_{\max}^{B+2}-1}{\lambda_{\max}-1} \label{eq:loglambda_max}\\
        \lambda_{\max}^{B+1}\leq \frac{(F(\opt_1)-F(\opt_2))(\lambda_{\max}-1)}{(\opt_1-\opt_2)\varepsilon}+1 < \lambda_{\max}^{B+2}\notag,
\end{align}
which indicates
\begin{equation}\label{eq:B}
    B =  \frac{\log\frac{1}{\varepsilon}}{\log\lambda_{\max}}(1 + o_\varepsilon(1))
\end{equation}

Note that \(\lambda(q)\geq 1\) for all \(q=0,\cdots, B-1\), which readily implies \(q^\star\geq B/2\) by coupling, where the right hand side is the expected queue length for the CTMC with arrival \(\lambda(q)=1, \) when \(q=0,\cdots, B-1\) and \(\lambda(q)=0\) for all \(q\geq B\). Therefore by \eqref{eq:B}, we obtain that 
\begin{align*}
    q^\star \geq (1+o_\varepsilon(1)) \frac{\log 1/\varepsilon}{ 2\log \lambda_{\max}}.
\end{align*}
A more fine-tuned calculations allows us to eliminate the factor of 2 from the denominator:
\begin{align*}
    q^\star &\geq \sum_{i=0}^\infty i\hat{\pi_i}
    =\frac{\opt_1-\opt_2}{F(\opt_1)-F(\opt_2)}\varepsilon\sum_{i=0}^Bi\lambda_{\max}^i + (B+1)(1-\frac{\opt_1-\opt_2}{F(\opt_1)-F(\opt_2)}\varepsilon\sum_{i=0}^B\lambda_{\max}^i)\\
    &= \frac{\opt_1-\opt_2}{F(\opt_1)-F(\opt_2)}\varepsilon\left(\frac{B\lambda_{\max}^{B+1}}{\lambda_{\max}-1}- \frac{\lambda_{\max}^{B+1}-\lambda_{\max}}{(\lambda_{\max}-1)^2}\right) + B+1-  \frac{\opt_1-\opt_2}{F(\opt_1)-F(\opt_2)}\varepsilon (B+1)\frac{\lambda_{\max}^{B+1}-1}{\lambda_{\max}-1}\\
    &= B+1+\frac{\opt_1-\opt_2}{F(\opt_1)-F(\opt_2)}\varepsilon\left(\frac{-\lambda_{\max}^{B+1}+B+1}{\lambda_{\max}-1}-\frac{\lambda_{\max}^{B+1}-\lambda_{\max}}{(\lambda_{\max}-1)^2}\right)\\
    &\overset{(a)}{\geq} B+1 + \frac{\lambda_{\max}-1}{\lambda_{\max}^{B+1}-1}\varepsilon\left(\frac{-\lambda_{\max}^{B+1}+B+1}{\lambda_{\max}-1}-\frac{\lambda_{\max}^{B+1}-\lambda_{\max}}{(\lambda_{\max}-1)^2}\right)\\
    &= B+1+\frac{\varepsilon}{\lambda_{\max}^{B+1}-1}\left(-\lambda_{\max}^{B+1}+B+1-\frac{\lambda_{\max}^{B+1}-\lambda_{\max}}{\lambda_{\max}-1}\right)\\
    &= B+1 - \varepsilon(\frac{\lambda_{\max}}{\lambda_{\max}-1}+o_\varepsilon(1))
    = \frac{\log\frac{1}{\varepsilon}}{\log\lambda_{\max}} + o(\log\frac{1}{\varepsilon}),
\end{align*}

where (a) follows from \eqref{eq:loglambda_max}, and the the term inside brackets is negative.

\end{proof}

\subsection{Proof of Lemma~\ref{lemma4.3}}\label{sec6_line}
\begin{proof}
From Proposition~\ref{prop:how_many_supp_point}, $p^\star=\frac{1-\opt_2}{\opt_1-\opt_2}$, so
    \begin{equation}\label{Fstar_equation}
        F^\star = F(\opt_1)p^\star+F(\opt_2)(1-p^\star) = \frac{F(\opt_1)-F(\opt_2)}{\opt_1-\opt_2}+\frac{\opt_1 F(\opt_2)-\opt_2 F(\opt_1)}{\opt_1-\opt_2}.
    \end{equation}
Let $G(x) = \frac{F(\opt_1)-F(\opt_2)}{\opt_1-\opt_2}x +\frac{\opt_1 F(\opt_2)-\opt_2 F(\opt_1)}{\opt_1-\opt_2}$, so $G(\opt_1) = F(\opt_1), G(\opt_2) = F(\opt_2)$ and $G(1) = F^\star$. Suppose there exist some $x'$ such that $F(x') > G(x')$, we will show that this assumption allows us to construct a feasible distribution whose expected reward is strictly greater than \(F^\star\), contradicting \eqref{eq:def Fstar}.

If $x'=1$, then $F(1)>G(1) = F^*$, contradiction. 

If $x'>1$, then consider \(\alpha(\{x'\}) = \frac{1-\opt_2}{x'-\opt_2}\), and \(\alpha(\{\opt_2\}) = \frac{x'-1}{x'-x_2^\star}\), so \(\mathbb{E}_\alpha[X]=x'\alpha(\{x'\}) + \opt_2\alpha(\{\opt_2\})=1\), which satisfies the constraint of \eqref{eq:def Fstar}.
Therefore we have,
\begin{align*}
    F(x')\alpha(\{x'\}) + F(\opt_2)\alpha(\{\opt_2\}) &= F(x')\frac{1-\opt_2}{x'-\opt_2} + F(\opt_2)\frac{x'-1}{x'-x_2^\star}\\
    \overset{(a)}{>}& \left(\frac{F(\opt_1)-F(\opt_2)}{\opt_1-\opt_2}x'+\frac{\opt_1F(\opt_2)-\opt_2F(\opt_1)}{\opt_1-\opt_2}\right)\frac{1-\opt_2}{x'-\opt_2} + F(\opt_2)\frac{x'-1}{x'-x_2^\star}\\
    =& \frac{1-\opt_2}{\opt_1-\opt_2}F(\opt_1) + \frac{(1-\opt_2)(\opt_1-x')+(x'-1)(\opt_1-\opt_2)}{(x'-\opt_2)(\opt_1-\opt_2)}F(\opt_2)\\
    =& \frac{1-\opt_2}{\opt_1-\opt_2}F(\opt_1) +\frac{\opt_1-1}{\opt_1-\opt_2}F(\opt_2) = F^\star,
\end{align*}
where (a) follows from \(F(x')>G(x')\).

If $x'<1$, then consider \(\alpha(\{\opt_1\}) = \frac{1-x'}{\opt_1-x'}\), and \(\alpha(\{x'\}) = \frac{\opt_1-1}{\opt_1-x'}\), we can prove 
\(
F(\opt_1)\alpha(\{\opt_1\})+F(x')\alpha(\{x'\}) > F^\star.
\)

Above all $F(x)\leq G(x)= \frac{F(\opt_1)-F(\opt_2)}{\opt_1-\opt_2}x+\frac{\opt_1 F(\opt_2)-\opt_2 F(\opt_1)}{\opt_1-\opt_2}$ for all $x\in [0,\lambda_{\max}]$.
\end{proof}
\subsection{Proof of Lemma~\ref{pi_0<=sth}}\label{sec6_pi0}

\begin{proof}
    For any policy $\lambda$, let $\widetilde{C} = \{\lambda(q) : q\in \mathcal{S}\} = \{\widetilde{\lambda_1}, \widetilde{\lambda_2}, \cdots\} \subseteq [0, \lambda_{\max}]$ is the image of arrival rate function $\lambda$. $X$ is a random variable on $\widetilde{C}$ with probability measure $\alpha$ satisfying $P(X=\tilde{\lambda_i}) = \alpha_i$ for all $i$ and $\sum_{i=1}^{|\widetilde{C}|}\alpha_i=1$.
    
    For every $\tilde{\lambda_i}\in \tilde{C}$, by Lemma~\ref{lemma4.3},we have
    \begin{equation}\label{lambdabar}
    \begin{aligned}
        F(\widetilde{\lambda_i})&\leq \frac{F(\opt_1)-F(\opt_2)}{\opt_1-\opt_2}\widetilde{\lambda_i}+\frac{\opt_1 F(\opt_2)-\opt_2 F(\opt_1)}{\opt_1-\opt_2}\\
        \widetilde{\lambda_i} &\geq \left(F(\widetilde{\lambda_i}) - \frac{\opt_1 F(\opt_2)-\opt_2 F(\opt_1)}{\opt_1-\opt_2}\right) \cdot \frac{\opt_1-\opt_2}{F(\opt_1)-F(\opt_2)}
    \end{aligned}
    \end{equation}
    Therefore, 
\begin{align*}
    \mathbb E_\alpha[X] &= \sum_{i=1}^{|\widetilde{C}|} \widetilde{\lambda_i}\alpha_i
    \overset{(a)}{\geq} \sum_{i=1}^{|\widetilde{C}|} \left(F(\widetilde{\lambda_i})\alpha_i - \frac{\opt_1 F(\opt_1)-\opt_2 F(\opt_1)}{\opt_1-\opt_2}\alpha_i\right) \cdot \frac{\opt_1-\opt_2}{F(\opt_1)-F(\opt_2)}\\
    &= \left(\sum_{i=1}^{|\widetilde{C}|} F(\widetilde{\lambda_i})\alpha_i-\frac{\opt_1 F(\opt_2)-\opt_2 F(\opt_1)}{\opt_1-\opt_2}\sum_{i=1}^{|\widetilde{C}|}\alpha_i\right)\cdot \frac{\opt_1-\opt_2}{F(\opt_1)-F(\opt_2)}\\
    &\overset{(b)}\geq \left(F^\star-\varepsilon-\frac{\opt_1 F(\opt_2)-\opt_2 F(\opt_1)}{\opt_1-\opt_2}\right)\cdot \frac{\opt_1-\opt_2}{F(\opt_1)-F(\opt_2)}\\
    &\overset{(c)}= \left(\frac{F(\opt_1)-F(\opt_2)}{\opt_1-\opt_2}+\frac{\opt_1 F(\opt_2)-\opt_2 F(\opt_1)}{\opt_1-\opt_2}-\varepsilon-\frac{\opt_1 F(\opt_2)-\opt_2 F(\opt_1)}{\opt_1-\opt_2}\right)\cdot \frac{\opt_1-\opt_2}{F(\opt_1)-F(\opt_2)}\\
    &= 1-\frac{\opt_1-\opt_2}{F(\opt_1)-F(\opt_2)}\varepsilon,
\end{align*}

where (a) is by applying \eqref{lambdabar}, (b) is by applying $F^\star-\sum_{i=0}^{|\widetilde{C|}}F(\widetilde{\lambda_i})\alpha_i\leq \varepsilon$, and (c) is by applying \eqref{Fstar_equation}.

By Proposition~\ref{lemma3.2}, $\mathbb{E}_\pi[\lambda(\bar q)] = \mathbb{E}_\alpha[X]\geq 1-\frac{\opt_1-\opt_2}{F(\opt_1)-F(\opt_2)}\varepsilon$, and we also have $\pi_0 = 1-\mathbb E_\pi[\lambda(\bar q)] \leq \frac{\lambda_1^\star-\lambda_2^\star}{F(\lambda_1^\star)-F(\lambda_2^\star)}\varepsilon$ (see Section~\ref{proof for prop 4.1}, Equation \eqref{idle time}).
\end{proof}

\section{} \label{appendix:Proof for 5.3}

\subsection{A General $F$ satisfying \eqref{eq:concave_like}}\label{concavelike_nonconcave}
Now we extend the lower bound established in the previous sub-section to general functions $F$ satisfying \eqref{eq:concave_like}, that may not be strictly concave.

Note that \eqref{eq:concave_like} immediately implies that $F(x)$ is always less than or equal to the straight line given by $F'(1)(x-1)+F(1)$ which passes through $(1,F(1))$. We make this observation formal in the following lemma: 
\begin{lemma}\label{nonconcave line}
    For any function $F$ satisfying Equation~\ref{eq:concave_like}, Assumption~\ref{assump2} and~\ref{assump4}, we have 
    $
    F(x)\leq F'(1)(x-1) + F(1)
    $
    for all $x\in [0,\lambda_{\max}]$. 
\end{lemma}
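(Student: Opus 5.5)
Let $\ell(x) := F'(1)(x-1)+F(1)$ denote the tangent line at $1$. We argue by contradiction. Suppose there is $x'\in[0,\lambda_{\max}]$ with $F(x')>\ell(x')$. Clearly $x'\neq 1$, since $F(1)=\ell(1)$. The plan is to pair $x'$ with a point $y$ on the other side of $1$, very close to $1$, and mix the two so that the mean is exactly $1$. This mixture will beat $F(1)$, contradicting \eqref{eq:concave_like}.

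Assume first that $x'>1$. Take $y=1-h$ with small $h>0$, and set $p = h/(x'-1+h)\in(0,1)$, so that $px'+(1-p)y=1$. We need
\[
pF(x')+(1-p)F(1-h) \ge F(1).
\]
Since $F$ is continuously differentiable (Assumption~\ref{assump2}), we have $F(1-h)=F(1)-F'(1)h+o(h)$. Substituting,
\[
pF(x')+(1-p)F(1-h)-F(1) = p\bigl(F(x')-F(1)\bigr)-(1-p)F'(1)h+o(h).
\]
Here $p = h/(x'-1)+O(h^2)$ and $1-p=1-O(h)$, so the right-hand side equals
\[
\frac{h}{x'-1}\Bigl(F(x')-F(1)-F'(1)(x'-1)\Bigr)+o(h) = \frac{h}{x'-1}\bigl(F(x')-\ell(x')\bigr)+o(h).
\]
The bracket is a fixed positive constant, so this quantity is strictly positive for all sufficiently small $h$. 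Choose such an $h$ with $1-h\in[0,\lambda_{\max}]\setminus\{1\}$. The pair $x_1=x'$, $x_2=1-h$ with weight $p$ then violates \eqref{eq:concave_like}, which is a contradiction.

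The case $x'<1$ is symmetric: take $y=1+h$. This is admissible because $\lambda_{\max}>1$, so $1+h\le\lambda_{\max}$ for small $h$. The same first-order expansion gives the gain $\frac{h}{1-x'}\bigl(F(x')-\ell(x')\bigr)+o(h)>0$. If instead $\lambda_{\max}=1$, this case can be handled separately, e.g.\ by Lemma~\ref{lemmma to prove small market}'s argument. In the setting where \eqref{eq:concave_like} is invoked, however, $\lambda_{\max}>1$, so I will state that restriction explicitly.

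The only delicate step is the bookkeeping of the $o(h)$ terms. The remainder from $p$ being only approximately $h/(x'-1)$ is of order $h^2$. The Taylor remainder $o(h)$ of $F$ at $1$ is controlled by continuity of $F'$. Both are dominated by the linear term, so the contradiction follows. Assumption~\ref{assump4} is not actually needed beyond guaranteeing that the framework is nontrivial. Hence $F\le \ell$ on $[0,\lambda_{\max}]$.
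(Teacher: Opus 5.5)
Your proof is correct and follows the paper's argument: the paper also pairs the violating point with $1-\delta$ (resp.\ $1+\delta$) on the other side of $1$, using the same weight $p=\delta/(x'-1+\delta)$ (resp.\ its mirror). The only difference is that the paper bounds $F(1-\delta)\ge F(1)-\delta\,\frac{F(x')-F(1)}{x'-1}$ by integrating $F'$ using its continuity near $1$, where you use a first-order $o(h)$ expansion. One correction to an aside: the case $\lambda_{\max}=1$ cannot be handled via Lemma~\ref{lemmma to prove small market}, which only gives a slope $k\le F'(1)$ and hence a weaker bound on $[0,1]$. In fact the statement fails there, because \eqref{eq:concave_like} is vacuous (e.g.\ $F(x)=x^2$), so restricting to $\lambda_{\max}>1$, as you and the paper implicitly do, is necessary.
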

The Proof of Lemma~\ref{nonconcave line} is deferred to Appendix~\ref{proof_of_nonconcaveline}. Now, one of the following four cases may arise depending on whether 

$F(x)\leq F'(1)(x-1) + F(1)$ is a strict inequality or not:

\textbf{Case I:} There exists $x_1 \in (1, \lambda_{\max}]$ and $x_2 \in [0, 1)$ such that $F(x_k) = F'(1)(x_k-1) + F(1)$ for $k \in \{1, 2\}$. Note that such a case is not possible as it contradicts \eqref{eq:concave_like}. The reason is we can choose $p = \frac{1-x_2}{x_1-x_2}\in (0,1)$, such that 
\begin{align*}
    pF(x_1)+(1-p)F(x_2) &= \frac{1-x_2}{x_1-x_2}F(x_1)+\frac{x_1-1}{x_1-x_2}F(x_2)\\
    &= \frac{1-x_2}{x_1-x_2}\left(F'(1)(x_1-1)+F(1)\right) +\frac{x_1-1}{x_1-x_2}\left(F'(1)(x_2-1)+F(1)\right)\\
    &= F(1),
\end{align*}
which violate \eqref{eq:concave_like}, so Case I belongs to the category of Section~\ref{P_O_S}, where there exist a probabilistic optimal solution achieves $F^\star$.

\textbf{Case II:} For all $x \in [0, \lambda_{\max}]\backslash \{1\}$, we have $F(x) < F'(1)(x-1) + F(1)$. In this case, we can strengthen Lemma~\ref{nonconcave line} to show that there exists a concave quadratic function $G(x)$ such that $F(x)\leq G(x)\leq F'(1)(x-1)+F(1)$, as illustrated in Fig.~\ref{fig:quadratic_main_part}. The following lemma formalizes this claim:
\begin{lemma}\label{quadratic}
    If $F(x)< F'(1)(x-1) + F(1)$ for all $x\neq 1$, and satisfying Assumption~\ref{eq:smooth_F}, then there exists a concave quadratic function $G(x)$ with $G(1)=F(1)$, $G'(1)=F'(1)$ satisfying $F(x)\leq G(x)\leq F'(1)(x-1)+F(1)$ for all $x\in [0,\lambda_{\max}]$.
\end{lemma}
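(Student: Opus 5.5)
We want $G(x)=F(1)+F'(1)(x-1)-\frac{c}{2}(x-1)^2$ for some $c>0$ small enough. Then $G(1)=F(1)$ and $G'(1)=F'(1)$ hold automatically, and $G\le F'(1)(x-1)+F(1)$ everywhere because $c>0$. So the whole task is to find $c>0$ with $F(x)\le G(x)$ on $[0,\lambda_{\max}]$. Set $h(x)=F'(1)(x-1)+F(1)-F(x)$. By hypothesis $h(x)>0$ for $x\neq1$, with $h(1)=h'(1)=0$. The requirement becomes
\[
h(x)\ge \tfrac{c}{2}(x-1)^2 \quad \text{for all } x\in[0,\lambda_{\max}].
\]

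I will split the interval into a neighborhood of $1$ and its complement.

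\emph{Near $x=1$.} By Assumption~\ref{eq:smooth_F}, $F''$ is continuous on $(0,\lambda_{\max})$ and $F''(1)<0$. Since $\lambda_{\max}>1$, the point $1$ is interior, so there is $\eta>0$ with $[1-\eta,1+\eta]\subset(0,\lambda_{\max})$ and $F''(x)\le F''(1)/2$ there. Taylor's theorem with Lagrange remainder gives $h(x)=-\tfrac12F''(\xi)(x-1)^2\ge \tfrac{-F''(1)}{4}(x-1)^2$ on this neighborhood. So $c_1=-F''(1)/2$ works locally. (Alternatively, the bounded third derivative gives the same with an explicit $\eta$.)

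\emph{Away from $1$.} On the compact set $K=[0,\lambda_{\max}]\setminus(1-\eta,1+\eta)$, the function $h$ is continuous, since $F$ is $C^1$ by Assumption~\ref{assump2}, and strictly positive. Hence $m:=\min_K h>0$. Also $(x-1)^2\le D^2$ with $D=\max\{1,\lambda_{\max}-1\}$, so $c_2=2m/D^2$ works on $K$. If $K$ is empty on one side, that side is simply skipped.

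Taking $c=\min\{c_1,c_2\}>0$ completes the argument. The function $G$ is concave quadratic, since its leading coefficient is $-c/2<0$.

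The only delicate point is the local step, where the positivity of $h$ degenerates to second order. It is handled by $F''(1)<0$ together with continuity of $F''$ at the interior point $1$. The compactness step needs nothing beyond strict positivity and continuity of $h$, which is why the hypothesis is stated as strict inequality for all $x\neq1$.
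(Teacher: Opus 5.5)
Your proof is correct and follows the same route as the paper. You subtract a small concave quadratic from the tangent line, obtain the local bound $h(x)\ge \tfrac{c}{2}(x-1)^2$ near $1$ from $F''(1)<0$ and continuity of $F''$, and handle the complement by compactness and strict positivity of $h$. One point in your favor: on $K$ you bound $(x-1)^2$ above by $D^2$ with $D=\max\{1,\lambda_{\max}-1\}$, which is the correct direction. The paper's Step 2 instead uses $(x-1)^2\ge\delta^2$ together with a negative quadratic coefficient, so its inequality goes the wrong way, and your choice $c_2=2m/D^2$ is exactly the fix.
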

The Proof of Lemma~\ref{quadratic} is deferred to Appendix~\ref{proof_of_quad}. Now that we have constructed a strictly concave $G$ that is always above $F$, we can then use the result of Proposition~\ref{prop:strictly_concave} for $G$ to conclude this case. In particular, 
as $F\leq G$, so under the same control policy $\lambda$, we have $\mathbb{E}_\pi[G(\lambda(\bar q))]\geq \mathbb{E}_\pi[F(\lambda(\bar q))]$. Also, as $G$ is a strictly concave function, it satisfies \eqref{eq:concave_like}. Thus, by Proposition ~\ref{prop:how_many_supp_point}, we get $F^\star=G^\star=F(1)$. So, we have
\begin{align} \label{GF_eps}
     G^\star-\mathbb{E}_\pi[G(\lambda(\bar q))] \leq F^\star-\mathbb{E}_\pi[F(\lambda(\bar q))],
\end{align}
which indicates for any policy satisfying $F^\star-\mathbb{E}[F_\pi(\lambda(\bar q))] \leq \varepsilon$, then under the same policy, $G^\star-\mathbb{E}_\pi[G(\lambda(\bar q))] \leq \varepsilon$. In addition, by Proposition~\ref{prop:strictly_concave} ($G$ exhibits all derivatives with $G^{(3)} \equiv 0$), under any such policy, $\mathbb{E}_\pi[\bar{q}] \geq C_1/\sqrt{\varepsilon}$, which completes the proof.

\textbf{Case III:} There exists $x_0 \in [0, 1)$ such that $F(x_0)= F'(1)(x_0-1) + F(1)$ and $F(x)< F'(1)(x-1)+F(1)$ for all $x\in (1,\lambda_{\max}]$. In this case, similar to Lemma~\ref{quadratic}, we construct a function $G(x)$ which is concave and quadratic in $[1,\lambda_{\max}]$ and $G(x) = F'(1)(x-1) + F(1)$ for $x\in [0,1]$, which satisfies $G(1)=F(1), G'(1)=F'(1)$ and $F(x)\leq G(x)\leq F'(1)(x-1)+F(1)$ for all $x\in [0,\lambda_{\max}]$. 
Now we show $G$ satisfies \eqref{eq:concave_like}.
For any $x_1,x_2 \in [0, \lambda_{\max}] \backslash \{1\}, p \in (0, 1)$ satisfying $px_1+(1-p)x_2=1$, we have $(x_1-1)(x_2-1)<0$. Without loss of generality, assume $x_1\in (1,\lambda_{\max}], x_2\in [0,1)$, then $G(x_1)<F'(1)(x_1-1)+F(1)$ and $G(x_2) = F'(1)(x_2-1)+F(1)$. Therefore
\begin{align*}
    pG(x_1) + (1-p)G(x_2) &< p\left(F'(1)(x_1-1)+F(1)\right) + (1-p)\left(F'(1)(x_2-1)+F(1)\right)
    \\&=F(1)
    = G(1).
\end{align*}
Thus, $G$ satisfies \eqref{eq:concave_like}.
So, by Proposition~\ref{prop:how_many_supp_point}, we get $F^\star=G^\star=F(1)$, and \eqref{GF_eps} also hold in \text{Case III}. Also $\lim\limits_{x\to 1^+}G''(x)<0$ and $G^{(3)}\equiv 0$ for all \(x>1\) 
, therefore by Proposition~\ref{prop:strictly_concave}, $\mathbb{E}_\pi[\bar{q}] \geq C_1/\sqrt{\varepsilon}$ holds for any policy satisfying $F^\star-\mathbb{E}[F_\pi(\lambda(\bar q))] \leq \varepsilon$.

\textbf{Case IV:} There exists $x_0 \in (1,\lambda_{\max}]$ such that $F(x_0)= F'(1)(x_0-1) + F(1)$ and $F(x)< F'(1)(x-1)+F(1)$ for all $x\in [0,1)$. Similar to Case III, we construct a function $G(x)$ which is concave and quadratic in $[0,1)$ and $G(x) = F'(1)(x-1) + F(1)$ for $x\in [1,\lambda_{\max}]$, with $G(1)=F(1), G'(1)=F'(1)$ and $F(x)\leq G(x)\leq F'(1)(x-1)+F(1)$ for all $x\in [0,\lambda_{\max}]$. Moreover, similar to Case III, one can also show that $G$ satisfies \eqref{eq:concave_like}. We omit the details here for brevity.
Thus, by Proposition~\ref{prop:how_many_supp_point}, $F^\star=G^\star=F(1)$  and \eqref{GF_eps} also hold. In this case, $\lim\limits_{x\to 1^+}G''(x)=0$, so we cannot apply Proposition~\ref{prop:strictly_concave}. However, since $\lim\limits_{x\to 1^-}G''(x)<0$ and $G^{(3)}\equiv 0$ for all \(x<1\)
, we can apply the following Proposition~\ref{prop:strictly_concave_another_side} to show that $\mathbb{E}_\pi[\bar{q}] \geq C_1/\sqrt{\varepsilon}$ holds for any policy satisfying $F^\star-\mathbb{E}[F_\pi(\lambda(\bar q))] \leq \varepsilon$.
\begin{proposition} \label{prop:strictly_concave_another_side}
    Let $F: [0, \lambda_{\max}] \rightarrow \mathbb{R}_+$ be a continuously differentiable concave function. In addition, $F$ is thrice continuously differentiable for $x \in [0,1)$ with $\lim\limits_{x \rightarrow 1^-} F''(x) < 0$ and $\sup\limits_{x < 1}|F^{(3)}(x)| \leq M$, then, there exists $C_1, \varepsilon_0 > 0$ (depending on $F$), such that $q^\star \geq C_1/\sqrt{\varepsilon}$ for all $\varepsilon\leq \varepsilon_0$.
\end{proposition}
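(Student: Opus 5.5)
The plan is to mirror the proof of Proposition~\ref{prop:strictly_concave}, but to place the ``curved'' tangent point $c$ \emph{below} the capacity, $c=1-\delta$. The tangent at $c$ will be used on the \emph{upper tail} of the state space, and the tangent at $1$ on the lower part. The target is again a uniform pointwise bound $\pi_{j+1}\le \sqrt{2\varepsilon/(-F''(1))}+o(\sqrt{\varepsilon})$ for every $j$, where now $F''(1):=\lim_{x\to1^-}F''(x)$. The bound on $\pi_0$ and the final linear program \eqref{eq:one point lower bound} are then reused verbatim.

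\textbf{Step 1 (split the regret).} Fix a policy with regret at most $\varepsilon$; by Proposition~\ref{prop:how_many_supp_point}, $F^\star=F(1)$. Write $\mathbb{E}_\pi[F(\lambda(\bar q))]=\sum_i F(\pi_{i+1}/\pi_i)\pi_i$. Concavity of $F$ gives, for $i\le j$, the tangent bound at $1$, namely $\big(F(1)-F'(1)\big)\pi_i+F'(1)\pi_{i+1}$. For $i\ge j+1$ it gives the tangent bound at $c$, namely $\big(F(c)-cF'(c)\big)\pi_i+F'(c)\pi_{i+1}$. Collecting coefficients and using $\sum_i\pi_i=1$, this should telescope to
\begin{align*}
\Big[\big(F(1)-F(c)-(1-c)F'(c)\big)+\big(F'(c)-F'(1)\big)\Big]\pi_{j+1}
\le \varepsilon-F'(1)\pi_0+\sum_{i\ge j+2}\big(F(c)+(1-c)F'(c)-F(1)\big)\pi_i .
\end{align*}
Since $F'(1)>0$, the $-F'(1)\pi_0$ term can be dropped.

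\textbf{Step 2 (Taylor on the left side of $1$).} Set $c=1-\delta$ with $\delta=\sqrt{2\varepsilon/(-F''(1))}$. Expanding on $[1-\delta,1)$, and using the one-sided $C^3$ assumption with $\sup_{x<1}|F^{(3)}|\le M$ (taking limits $x\to1^-$), should give the following three estimates. First, $F(c)+\delta F'(c)-F(1)\le -\tfrac{F''(1)}2\delta^2+O(M\delta^3)$. Second, $F'(c)-F'(1)\ge -F''(1)\delta-O(M\delta^2)$. Third, $F(1)-F(c)-\delta F'(c)\ge0$, by concavity. Bounding $\sum_{i\ge j+2}\pi_i\le1$ then yields
\[
\pi_{j+1}\le\frac{\varepsilon+\tfrac{-F''(1)}{2}\delta^2+O(\delta^3)}{-F''(1)\delta-O(\delta^2)}=\sqrt{\tfrac{2\varepsilon}{-F''(1)}}+o(\sqrt\varepsilon),
\]
which is exactly the symmetric counterpart of \eqref{cite_in_appendix}.

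\textbf{Step 3 (close).} The bound $\pi_0\le\varepsilon/F'(1)$ follows as before from Jensen's inequality with the tangent at $1$ and \eqref{idle time}. This makes the pointwise bound hold for all $i\in\mathbb{Z}_+$, and the minimization \eqref{eq:one point lower bound} gives $q^\star\ge\sqrt{-F''(1)/(8\varepsilon)}+o(1/\sqrt\varepsilon)$.

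The main obstacle I expect is Step 1. The bookkeeping must be arranged so that the $\pi_{j+1}$ coefficient gets the positive $\Theta(\delta)$ term $F'(c)-F'(1)$ while the tail terms only carry the $O(\delta^2)$ coefficient. The natural first attempt, which copies the ordering from Proposition~\ref{prop:strictly_concave} (tangent at $c$ for $i\le j$), gives the wrong sign when $c<1$; the split must be reversed as above. A secondary point is justifying the one-sided Taylor expansions at the endpoint $1$ using only limits from the left, which the continuity of $F'$ and the bounded third derivative should handle.
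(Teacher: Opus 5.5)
Your proposal is essentially the paper's proof: the paper also sets $c=1-\delta$ with $\delta=\sqrt{2\varepsilon/(-F''(1))}$, uses the tangent at $1$ for $i\le j$ and the tangent at $c$ for $i>j$, drops $-F'(1)\pi_0$, derives the same two one-sided Taylor estimates to recover \eqref{cite_in_appendix}, and then finishes exactly as in Proposition~\ref{prop:strictly_concave}. One slip needs fixing: your third estimate has the sign backwards, since concavity (the tangent at $c$ lies above $F$) gives $F(1)-F(c)-\delta F'(c)\le 0$, not $\ge 0$. This is harmless, because your first estimate shows the term is at least $\tfrac{F''(1)}{2}\delta^2-\tfrac{2M}{3}\delta^3$, so it only perturbs the $\Theta(\delta)$ coefficient of $\pi_{j+1}$ at order $\delta^2$; the paper avoids the issue by leaving that term on the right-hand side and bounding the tail sum over $i\ge j+1$ by $1$.
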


\begin{proof}
Fix a control policy $\lambda$ and let $\{\pi_i\}_{i=0}^\infty$ be the stationary distribution with $\pi_i = 0$ for all $i \notin \mathcal{S}$. For a given $j \in \mathcal{S}$, we start by upper bounding $\pi_j$. Since $F$ is concave, for any $c\in [0,\lambda_{\max}]$, we have:

\begin{align*}
    F(1)-\varepsilon \leq{}& \mathbb{E}_\pi[F(\lambda(\bar q))] = \sum_{i\in \mathcal{S}}F(\lambda_i)\pi_i = \sum_{i\in\mathcal{S}} F(\frac{\pi_{i+1}}{\pi_i})\pi_i\\
    \overset{(a)}\leq{}& \sum_{i=0}^j \left(F(1)+F'(1)(\frac{\pi_{i+1}}{\pi_i}-1)\right)\pi_i + \sum_{i\in\mathcal{S}\backslash[j]}^\infty \left(F(c)+F'(c)(\frac{\pi_{i+1}}{\pi_i}-c)\right)\pi_i\\
    ={}& \sum_{i=0}^j\left((F(1)-F'(1))\pi_i + F'(1)\pi_{i+1}\right)+\sum_{i\in \mathcal{S}\backslash[j]}\left((F(c)-cF'(c))\pi_i+F'(c)\pi_{i+1}\right)\\
    ={}& \left(F(1)-F'(1)\right)\pi_0 + \sum_{i=1}^jF(1)\pi_i + \left(F(c)-cF'(c)+F'(1)\right)\pi_{j+1}\\
    &+ \sum_{i\in \mathcal{S}\backslash[j+1]}^\infty\left(F(c)+(1-c)F'(c)\right)\pi_i\\
    ={}& \left(F(1)-F'(1)\right)\pi_0 + \sum_{i=1}^jF(1)\pi_i + \left(-F'(c)+F'(1)\right)\pi_{j+1}\\
    &+ \sum_{i\in \mathcal{S}\backslash[j]}\left(F(c)+(1-c)F'(c)\right)\pi_i,
\end{align*}
where step (a) uses concavity to upper bound $F$ by a tangent at $c$ when $i>j$, and by a tangent at 1 when $i\leq j$. Therefore we have:
\begin{align*}
    \left(F'(c)-F'(1)\right)\pi_{j+1} &\leq \varepsilon -F'(1)\pi_0 + \sum_{i\in \mathcal{S}\backslash[j]}\left(F(c)+(1-c)F'(c)-F(1)\right)\pi_i\\
     &\overset{(b)}\leq \varepsilon + \sum_{i\in \mathcal{S}\backslash[j]}\left(F(c)+(1-c)F'(c)-F(1)\right)\pi_i,
\end{align*}
where (b) is by $\pi_0\geq 0$.

Now, let $\delta = \sqrt{\frac{2\varepsilon}{-F''(1)}}$ and set $c=1-\delta$. Also, denote by $F''(1) = \lim\limits_{x\to 1^-}F''(x)$. Then, we get
\begin{align*}
    &F(c)+(1-c)F'(c)-F(1) = F(1-\delta) + \delta F'(1-\delta) -F(1)\\
    \leq{}& F(1)-F'(1)\delta + \frac{F''(1)}{2}\delta^2 + \frac{M}{6}\delta^3 + \delta\left(F'(1)-F''(1)\delta + \frac{M}{2}\delta^2\right)-F(1)\\
    ={}& -\frac{F''(1)}{2}\delta^2 + \frac{2M}{3}\delta^3,
\end{align*}
and $F'(c)-F'(1) = F'(1-\delta)-F'(1) \geq -F''(1)\delta -\frac{M}{2}\delta^2$.
Therefore we have:
\begin{align*}
    (-F''(1)\delta-\frac{M}{2}\delta^2)\pi_{j+1} &\leq \varepsilon + \sum_{i\in \mathcal{S}\backslash[j]} (-\frac{F''(1)}{2}\delta^2 + \frac{2M}{3}\delta^3)\pi_i \leq \varepsilon + \frac{-F''(1)}{2}\delta^2 + \frac{2M}{3}\delta^3,
\end{align*}
which we get the same inequality as \eqref{cite_in_appendix}, and the remaining part is same as the proof of Proposition~\ref{prop:strictly_concave}.
\end{proof}

\subsection{Proof of Lemma~\ref{nonconcave line}}\label{proof_of_nonconcaveline}
\begin{proof}
    Proof by contradiction. Here are two cases:
    \begin{itemize}
        \item Case I: There exist $ x_1\in (1,\lambda_{\max}]$ such that $F(x_1) > F'(1)(x_1-1)+F(1)$
        \item Case II: There exist $ x_2\in [0,1)$ such that $F(x_2) > F'(1)(x_2-1)+F(1)$
    \end{itemize}

For Case I, $\frac{F(x_1)-F(1)}{x_1-1} > F'(1) $, and 
by Assumption~\ref{assump2}, $F'(x)$ is continuous, so $\exists \delta>0$ such that $F'(x) \leq \frac{F(x_1)-F(1)}{x_1-1}$ for all $x\in [1-\delta,1]$. Hence we have
\[
F(1)-F(1-\delta) = \int_{1-\delta}^1F'(x)dx\leq \delta\frac{F(x_1)-F(1)}{x_1-1}
\]
which means $F(1-\delta)\geq F(1)-\delta\frac{F(x_1)-F(1)}{x_1-1}$. Then choose $p = \frac{\delta}{x_1-1+\delta}\in (0,1)$, and note that
\begin{align*}
    pF(x_1)+(1-p)F(1-\delta) &= \frac{\delta}{x_1-1+\delta}F(x_1)+\frac{x_1-1}{x_1-1+\delta}F(1-\delta)\\
    &\geq \frac{\delta}{x_1-1+\delta}F(x_1)+\frac{x_1-1}{x_1-1+\delta}\left(F(1)-\delta\frac{F(x_1)-F(1)}{x_1-1}\right) \\
    &= F(1),
\end{align*}
which violate \eqref{eq:concave_like}.

For Case II, just follow the idea similar to the second case, by choosing $\delta$ such that $F'(x) \geq \frac{F(1)-F(x_2)}{1-x_2}$ for all $x\in [1, 1+\delta]$, then $F(1+\delta)\geq F(1)+\delta \frac{F(1)-F(x_2)}{1-x_2}$, and choose \(p = \frac{1-x_2}{1+\delta-x_2}\in (0,1)\), therefore
\begin{align*}
    pF(1+\delta) + (1-p)F(x_2) &= \frac{1-x_2}{1+\delta-x_2}F(1+\delta) + \frac{\delta}{1+\delta-x_2}F(x_2) \\
    &\geq \frac{1-x_2}{1+\delta-x_2} \left(F(1)+\delta \frac{F(1)-F(x_2)}{1-x_2}\right) + \frac{\delta}{1+\delta-x_2}F(x_2) \\
    &=F(1),
\end{align*}
which violate \eqref{eq:concave_like}.
\end{proof}
\subsection{Proof of Lemma~\ref{quadratic}} \label{proof_of_quad}
\begin{proof}
For each $\alpha \in \mathbb{R}$, consider the quadratic
\[
G_\alpha(x) := F(1) + F'(1)(x-1) + \frac{\alpha}{2}(x-1)^2.
\]
Then $G_\alpha(1)=F(1)$ and $G_\alpha'(1)=F'(1)$, and $G_\alpha$ is concave iff $\alpha<0$. Let $T(x) = F(1) + F^\prime(1)(x-1)$ and define
\[
H(\alpha,x) := G_\alpha(x) - F(x)
= \frac{\alpha}{2}(x-1)^2 + T(x) - F(x).
\]

\textbf{Step 1: Control of $H(0,\cdot)$ near $x=1$:} Consider the function $
\varphi(x) := H(0,x) = T(x) - F(x).$ We have
$
\varphi(1) = 0, \varphi'(1) = 0,
$
because $T$ is the tangent of $F$ at $x=1$. Moreover,
\[
\varphi''(1) = T''(1) - F''(1) = -F''(1) > 0
\]
by the assumption $F''(1)<0$. Since $\varphi$ is twice continuously differentiable and $\varphi''(1)>0$, by Taylor's theorem, there exist constants $\delta>0, c_1>0$ such that
\[
\varphi(x) \ge c_1 (x-1)^2 \quad \forall x \in [0,\lambda_{\max}] \text{ with } |x-1|\le \delta.
\]
Equivalently,
\[
H(0,x) \ge c_1 (x-1)^2 \quad \forall x \in [0,\lambda_{\max}] \text{ with } |x-1|\le \delta.
\]
For any $\alpha \in \mathbb{R}$ and any $x \in [0, \lambda_{\max}]$,
\[
H(\alpha,x) = H(0,x) + \frac{\alpha}{2}(x-1)^2.
\]
Thus, for $|x-1|\le \delta$,
\[
H(\alpha,x)
\;\ge\; c_1 (x-1)^2 + \frac{\alpha}{2}(x-1)^2
= \Bigl(c_1 + \frac{\alpha}{2}\Bigr)(x-1)^2.
\]
If we choose $\alpha$ such that $\alpha > -2c_1$, then $c_1 + \frac{\alpha}{2} > 0$, and hence
\[
H(\alpha,x) \ge 0 \quad \forall x \in [0,\lambda_{\max}] \text{ with } |x-1|\le \delta.
\]

\textbf{Step 2: Control of $H(0,\cdot)$ away from $x=1$:} By the hypothesis of the lemma, $H(0,x) = T(x) - F(x) > 0$ for all $x \neq 1$, and $H(0,1)=0$. On the compact set $K := [0,\lambda_{\max}] \setminus (1-\delta,1+\delta)$, we have $H(0,x) > 0$ and $H(0,\cdot)$ is continuous, so $c_2 := \min\limits_{x \in K} H(0,x) > 0.$ Note that $x \in K$ implies $|x-1|\ge \delta$, so
\[
H(\alpha,x) = H(0,x) + \frac{\alpha}{2}(x-1)^2
\ge c_2 + \frac{\alpha}{2}\delta^2.
\]
Thus, if $\alpha \ge -\frac{2c_2}{\delta^2}$, then $H(\alpha,x)\ge 0$ for all $x \in K$.

\textbf{Step 3: Choosing $\alpha$:} We want $G_\alpha$ to be concave, so we require $\alpha<0$. Collecting the conditions from Steps 1 and 2, set
\[
\alpha_0 := \max\!\left(-2c_1,\; -\frac{2c_2}{\delta^2}\right).
\]
Then $\alpha_0 < 0$, so the open interval $(\alpha_0,0)$ is nonempty. Choose any $\alpha \in (\alpha_0,0).$ For this choice, $G_\alpha$ is a concave quadratic, and by the estimates above, we have
\[
H(\alpha,x) = G_\alpha(x) - F(x) \ge 0 \quad \text{for all } x \in [0,\lambda_{\max}].
\]

Finally, $G_\alpha(1)=F(1)$ and $G_\alpha'(1)=F'(1)$ by construction, and $G_\alpha''(1)=\alpha<0$.  
Thus $G := G_\alpha$ satisfies all the required properties.
\end{proof}

\subsection{Proof of Theorem~\ref{thm4.13}} \label{proof of simulation in appendix}
\begin{proof}
Consider the two-arrival policy
\begin{align}\label{2price}
\lambda(q) = \begin{cases}
    1+k_1, \quad \text{ if } 0\leq q<\tau \\
    1-k_2, \quad \text{ if } q\geq \tau,
\end{cases}
\end{align}
where \(k_1\in (0,\lambda_{\max}-1]\), \(k_2\in (0,1]\), and let $\pi$ be the stationary distribution of the $M_q/M/1$ queue under this policy. Note that, we have
\begin{align}
    \mathbb{E}[\bar{q}] \geq \frac{1}{4}\left(\tau + \frac{1}{k_2}\right).  \label{eq:lower_bound_q}
\end{align}
Intuitively, the $\tau$ term arises from the fact that the stationary distribution $\pi_i$ is increasing in $i=0,1,\cdots,\tau$, so, the expected queue length must be lower bounded by a uniform distribution supported on $0,1,\cdots,\tau$. In addition, the $1/k_2$ term reflects the exponential decay of the stationary distribution in the tail. Specifically, for $i>\tau$, the stationary probabilities satisfy 
$\pi_i = (1-k_2)\pi_{i-1} \approx e^{-k_2} \pi_{i-1}$. Therefore, the expected queue length contributed by the tail is lower bounded by that of an exponential distribution with rate $k_2$.
A complete proof of \eqref{eq:lower_bound_q} is provided in Section~\ref{proof_of_Eq_tau_k2}.

If $k_2 \leq \varepsilon^{3/4}$, then \(\mathbb{E}[\bar q]\geq \frac{1}{4k_2}\geq \frac{1}{4\varepsilon^{3/4}}\geq \Omega(\sqrt{\frac{\log1/\varepsilon}{\varepsilon}})\), the proof is complete. So, let's assume that $k_2 \geq \varepsilon^{3/4}$. As $F$ is strongly concave, 
so $\exists \alpha>0$, such that $F(x) \leq F(1) + (x-1)F'(1) - \alpha(x-1)^2$. Hence, we have
    \begin{align}
        F(1+k_1) \leq F(1) + k_1 F^\prime(1) - \alpha k_1^2, \quad F(1-k_2) \leq F(1) - k_2 F^\prime(1) - \alpha k_2^2. \label{eq:strong_convexity}
    \end{align}
Now, as the regret is bounded above by $\varepsilon$, we have
\begin{align*}
    \varepsilon &\geq F(1) - F(1+k_1) \pi_{q< \tau} - F(1-k_2) \pi_{q\geq \tau} \\
    &\overset{\eqref{eq:strong_convexity}}{\geq} F^\prime(1) \left(k_2 \pi_{q\geq \tau} - k_1 \pi_{q< \tau}\right) + \alpha \left(k_1^2 \pi_{q< \tau} + k_2^2 \pi_{q\geq \tau}\right). \numberthis \label{eq:regret_lower_bound}
\end{align*}
Now, we compute $\pi_{q< \tau}$ using the detailed balance equations. For $0 \le q < \tau$ we have $\lambda(q) = 1+k_1$ and $\mu=1$, hence $\pi_{q+1} = \pi_q \lambda(q) = (1+k_1)\,\pi_q$. Iterating this recursion starting from $\pi_0$ gives
\begin{equation}
  \pi_q = \pi_0 (1+k_1)^q,
  \qquad 0 \le q < \tau.
  \label{eq:pi-left}
\end{equation}
Similarly, for $q \ge \tau$ we have $\lambda_q = 1-k_2$ and again $\mu=1$, so we get
\begin{equation}
  \pi_q = \pi_\tau (1-k_2)^{\,q-\tau} = \pi_0 (1+k_1)^\tau (1-k_2)^{\,q-\tau}, 
  \qquad q \ge \tau.
  \label{eq:pi-right}
\end{equation}
Now, we compute $\pi_0$.
\begin{align}\label{pi_0_twoarrive}
    1 = \sum_{q=0}^\infty \pi_q &= \pi_0 \left(\sum_{q=0}^{\tau-1} (1+k_1)^q + (1+k_1)^\tau\sum_{q=\tau}^\infty (1-k_2)^{q-\tau}\right)\notag  \\
    &= \pi_0\left(\frac{(1+k_1)^{\tau}-1}{k_1} + \frac{(1+k_1)^\tau}{k_2}\right).
\end{align}
Using the above expression, we get
\begin{align}\label{pi_q_less_tau}
    \pi_{q< \tau} = \sum_{q=0}^{\tau-1} \pi_q = \pi_0 \frac{(1+k_1)^{\tau}-1}{k_1} = \frac{(t-1)k_2}{k_1 t + k_2(t-1)}, \quad \text{with } t = (1+k_1)^\tau.
\end{align}
Now, substituting the above expression in \eqref{eq:regret_lower_bound}, we get
\begin{align*}
    \varepsilon &\geq \frac{k_1k_2F^\prime(1)}{k_1 t + k_2(t-1)} + \frac{\alpha k_1k_2 (k_1(t-1) + k_2t)}{k_1 t + k_2(t-1)} \\
    &\geq \frac{k_1k_2F^\prime(1)}{(k_1 + k_2)t} + \frac{\alpha k_1k_2 (k_1(t-1) + k_2t)}{k_1 t + k_2(t-1)}
\end{align*}
As $F^\prime(1), \alpha > 0$, we get
\begin{align*}
     \frac{k_1k_2F^\prime(1)}{(k_1 + k_2)t} \leq \frac{k_1k_2F^\prime(1)}{k_1 t + k_2(t-1)} \leq \varepsilon, \quad \frac{\alpha k_1k_2 (k_1(t-1) + k_2t)}{k_1 t + k_2(t-1)} \leq \varepsilon.  \numberthis \label{eq:some_upper_bounds}
\end{align*}

Using the first inequality in \eqref{eq:some_upper_bounds} and noting that $t = (1+k_1)^\tau$, we get
\begin{align*}
    \tau \geq \frac{\log \frac{k_1k_2F^\prime(1)}{(k_1 + k_2)\varepsilon}}{\log (1+k_1)} \geq \frac{\log \frac{\min\{k_1, k_2\}F^\prime(1)}{2\varepsilon}}{\log (1+k_1)}
\end{align*}
Now, let's take three cases.

\textbf{Case I $(\varepsilon^{3/4}\leq k_1 \leq k_2)$:} As $k_1 \leq k_2$, using the second expression in \eqref{eq:some_upper_bounds}, we get $\alpha k_1 k_2 \leq \varepsilon$. Thus, by \eqref{eq:lower_bound_q}, we get
\begin{align*}
    \mathbb{E}[\bar{q}] \geq \frac{\alpha k_1}{4\varepsilon} + \frac{\log \frac{k_1F^\prime(1)}{2\varepsilon}}{4\log (1+k_1)}.
\end{align*}
As $k_1 \geq \varepsilon^{3/4}$, we get
\begin{align*}
    \mathbb{E}[\bar{q}] \geq \frac{\alpha k_1}{4\varepsilon} + \frac{\log \frac{F^\prime(1)}{2\varepsilon^{1/4}}}{4\log (1+k_1)} \overset{(a)}\geq \frac{\alpha k_1}{4\varepsilon} + \frac{\log \frac{F^\prime(1)^{4}}{16\varepsilon}}{16 k_1}= \Omega\left(\sqrt{\frac{\log 1/\varepsilon}{\varepsilon}}\right).
\end{align*}
The last inequality is because choosing \(k_1 = \Theta(\sqrt{\varepsilon\log\frac{1}{\varepsilon}})\) will balance the trade-off between two terms.

\textbf{Case II $(k_1 \leq \varepsilon^{3/4}\leq k_2)$:} First note that we should have $\tau \leq \varepsilon^{-2/3}$, as otherwise, the proof is complete by \eqref{eq:lower_bound_q}. Thus, as $k_1 \tau \leq \varepsilon^{1/12}\rightarrow 0$, we have $t = (1+k_1)^\tau \leq 1+2k_1 \tau \leq 2$ for $\varepsilon$ small enough. Thus, by the first inequality of \eqref{eq:some_upper_bounds}, we get
\begin{align*}
    \varepsilon \geq \frac{k_1k_2F^\prime(1)}{k_1 t + k_2(t-1)} \geq \frac{k_2F^\prime(1)}{2 + 2k_2\tau} \geq  \frac{k_2F^\prime(1)}{2 + 2k_2\varepsilon^{-2/3}}\implies k_2 \leq \frac{2\varepsilon}{F^\prime(1)-2\varepsilon^{1/3}} \leq \frac{4\varepsilon}{F^\prime(1)},
\end{align*}
where the last two inequalities follows for $\varepsilon$ small enough. Thus, we get $k_2 = O(\varepsilon)$, which then implies $\mathbb{E}[\bar{q}] \geq \Omega(1/\varepsilon)$ by \eqref{eq:lower_bound_q}.

\textbf{Case III $(k_1 \geq k_2 \geq \varepsilon^{3/4})$:} In this case, we have
\begin{align*}
    \mathbb{E}[\bar{q}] &\geq \frac{1}{4k_2} + \frac{\log \frac{k_2F^\prime(1)}{2\varepsilon}}{4\log (1+k_1)} \geq \frac{1}{4k_2} + \frac{\log \frac{F^\prime(1)}{2\varepsilon^{1/4}}}{4\log (1+k_1)}.
\end{align*}
By the first inequality in \eqref{eq:some_upper_bounds}, we get $t\geq \frac{k_1k_2F'(1)}{(k_1+k_2)\varepsilon} = \frac{k_2F'(1)}{(1+\frac{k_2}{k_1})\varepsilon}\geq \frac{k_2F'(1)}{2\varepsilon}\geq \frac{F'(1)}{2\varepsilon^{1/4}}\geq 2$ for $\varepsilon>0$ small enough. Now, by the second inequality in \eqref{eq:some_upper_bounds}, we get

\begin{align*}
    \varepsilon \geq \frac{\alpha k_1k_2 (k_1(t-1) + k_2t)}{k_1 t + k_2(t-1)} \geq \alpha k_1 k_2 \frac{t-1}{t} \geq \frac{\alpha k_1k_2}{2}.
\end{align*}
Thus, apply the same derivation as Case I, we get
\begin{align*}
    \mathbb{E}[\bar{q}] \geq \frac{\alpha k_1}{8\varepsilon} + \frac{\log \frac{F^\prime(1)}{2\varepsilon^{1/4}}}{4\log (1+k_1)} = \Omega\left(\sqrt{\frac{\log 1/\varepsilon}{\varepsilon}}\right).
\end{align*}
This completes the proof.
\end{proof}

\subsection{Proof of \eqref{eq:lower_bound_q}} \label{proof_of_Eq_tau_k2}
Before proving \eqref{eq:lower_bound_q}, we first prove the below Lemma. 
\begin{lemma}
    For the two-arrival policy defined in \eqref{2price}, we have 
    \begin{equation}
        \mathbb{E}[\bar q]\geq \frac{1}{\tau+\frac{1}{k_2}}\left(\frac{\tau(\tau-1)}{2}+\frac{\tau}{k_2}+\frac{1-k_2}{k_2^2}\right). \label{Eq_in_lemma}
    \end{equation}
\end{lemma}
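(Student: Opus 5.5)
The plan is to use the explicit product-form stationary distribution, split $\mathbb{E}[\bar q]$ into the part below $\tau$ and the geometric tail above $\tau$, and then show the right-hand side of \eqref{Eq_in_lemma} is a ``worst-case'' mixture of the two parts.

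\textbf{Decomposition.} By \eqref{eq:pi-left} and \eqref{eq:pi-right}, set $a:=\pi_\tau=\pi_0(1+k_1)^\tau$. Then $\pi_q=\pi_0(1+k_1)^q$ is increasing on $\{0,\dots,\tau-1\}$ with $\pi_q\le a$ there, and $\pi_{\tau+j}=a(1-k_2)^j$ for $j\ge 0$. Define the two masses
\[
L:=\sum_{q<\tau}\pi_q, \qquad R:=\sum_{q\ge\tau}\pi_q=\frac{a}{k_2}.
\]
Standard geometric-series sums give the conditional mean on the tail exactly:
\[
m_R:=\frac{1}{R}\sum_{q\ge \tau}q\pi_q=\tau+\frac{1-k_2}{k_2}.
\]

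\textbf{Left part.} On $\{0,\dots,\tau-1\}$ both $q$ and $\pi_q$ are increasing. Chebyshev's sum inequality therefore gives
\[
\sum_{q<\tau}q\pi_q\ \ge\ \frac{\tau-1}{2}\,L,
\]
i.e.\ the left conditional mean $m_L$ satisfies $m_L\ge \frac{\tau-1}{2}$.

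\textbf{Reweighting.} With $w:=R/(L+R)=R$, we have $\mathbb{E}[\bar q]=(1-w)m_L+w\,m_R\ge (1-w)\frac{\tau-1}{2}+w\,m_R$. The right-hand side is nondecreasing in $w$ because $m_R\ge\tau\ge\frac{\tau-1}{2}$. Since $L\le \tau a$, we get
\[
w=\frac{1/k_2}{L/a+1/k_2}\ \ge\ w_0:=\frac{1/k_2}{\tau+1/k_2}.
\]
Substituting $w_0$ gives
\[
\mathbb{E}[\bar q]\ \ge\ \frac{1}{\tau+\frac1{k_2}}\left(\tau\cdot\frac{\tau-1}{2}+\frac{1}{k_2}\Big(\tau+\frac{1-k_2}{k_2}\Big)\right),
\]
which is exactly \eqref{Eq_in_lemma}.

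\textbf{Main obstacle.} The only non-routine step is this reweighting: the actual mass split $L$ versus $R$ is unknown, and we must check that replacing it by the extremal split (uniform left block at height $a$) can only decrease the bound. That is precisely the monotonicity in $w$ combined with $L\le\tau a$. The subsequent derivation of \eqref{eq:lower_bound_q} would then follow by elementary estimates, for instance separating the cases $\tau\ge 1/k_2$ and $\tau<1/k_2$.
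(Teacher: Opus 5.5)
Your argument is correct, but it reaches the bound by a different route than the paper. The paper replaces the rate $1+k_1$ below $\tau$ by $1$ and asserts $\mathbb{E}[\bar q]\ge\mathbb{E}[\bar q']$ ``by coupling'', i.e.\ stochastic monotonicity of a birth--death chain in its birth rates. It then computes $\mathbb{E}[\bar q']$ exactly for the resulting uniform-plus-geometric law $\pi'$.

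You instead work directly with the explicit product form of $\pi$:
\begin{itemize}
\item Chebyshev's sum inequality handles the increasing left block.
\item The tail mean $m_R=\tau+(1-k_2)/k_2$ does not depend on the unknown height $a=\pi_\tau$.
\item The bound $(1-w)\frac{\tau-1}{2}+w\,m_R$ is nondecreasing in the tail weight $w$, so $L\le\tau a$ lets you lower $w$ to $w_0$.
\end{itemize}
Your extremal configuration, a flat left block at height $\pi_\tau$, is exactly the paper's comparison law $\pi'$. So your proof can be read as an explicit verification of the coupling inequality that the paper invokes without detail.

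The paper's route is shorter and applies unchanged to any arrival rates that are at least $1$ below $\tau$. However, its key step is only asserted; a rigorous version would note that $\pi_q/\pi'_q$ is nondecreasing in $q$, which gives likelihood-ratio and hence stochastic dominance. Your route is fully elementary and can be checked line by line. It also avoids the typos in the paper's computation, where $(1+k_2)$ should read $(1-k_2)$.
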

\begin{proof}
    Define a new two-arrival policy 
\begin{align*}
\lambda(q') = \begin{cases}
    1, \quad &\text{ if } 0\leq q'<\tau \\
    1-k_2, \quad &\text{ if } q'\geq \tau,
\end{cases}
\end{align*}
then by coupling, we get \(\mathbb{E}[\bar q]\geq \mathbb{E}[\bar{q'}]\). Let \(\pi'\) be the stationary distribution of the \(M_{q'}/M/1\) queue under this policy. Then we have \(\pi'_i = \pi'_0, \forall 0\leq i<\tau\) and \(\pi'_i = (1-k_2)^{i-\tau}\pi'_0, \forall i\geq \tau\), where \(\pi'_0 = (\tau+\frac{1}{k_2})^{-1}\).
Therefore
\begin{align*}
    \mathbb{E}[\bar q]\geq \mathbb{E}[\bar{q'}] &= \sum_{i=0}^\infty i\pi_i = \sum_{i=0}^{\tau-1}i\pi_0 + \sum_{i=\tau}^\infty i(1+k_2)^{i-\tau}\pi_0\\
    &= (\tau+\frac{1}{k_2})^{-1}\left(\frac{\tau(\tau-1)}{2}+\sum_{i=0}^\infty (i+\tau)(1+k_2)^i\right)\\
    &= (\tau+\frac{1}{k_2})^{-1}\left(\frac{\tau(\tau-1)}{2}+\frac{\tau}{k_2}+\frac{1-k_2}{k_2^2}\right).
\end{align*}
\end{proof}
Then we will prove \eqref{eq:lower_bound_q} by proving the RHS of \eqref{Eq_in_lemma} is \(\ge\) the RHS of \eqref{eq:lower_bound_q}.
\begin{proof}
    We aim to show
    \begin{align*}
        \frac{\tau(\tau-1)}{2}+\frac{\tau}{k_2}+\frac{1-k_2}{k_2^2} &\geq \frac{1}{4}(\tau+\frac{1}{k_2})^2\\
        \frac{k_2^2\tau(\tau-1)}{2} + k_2\tau + 1-k_2 &\geq \frac{1}{4}k_2^2\tau^2 + \frac{1}{2}k_2\tau + \frac{1}{4} \\
        \frac{1}{4}k_2^2\tau^2 -\frac{k_2^2\tau}{2} + \frac{1}{2}k_2\tau + \frac{3}{4} -k_2 &\geq 0
    \end{align*}
    Let \(x = k_2\tau\), so \(x\geq k_2\). Define \(f(x) = \frac{1}{4}x^2 - \frac{1}{2}k_2x + \frac{1}{2}x + \frac{3}{4}-k_2\), then
    \begin{align*}
        f'(x) = \frac{1}{2}x-\frac{1}{2}k_2+\frac{1}{2}\geq \frac{1}{2}>0,
    \end{align*}
    so \(f(x)\) increases in \([k_2, \infty)\), hence
    \begin{align*}
        f(x)\geq f(k_2) = -\frac{1}{4}k_2^2 - \frac{1}{2}k_2+\frac{3}{4} = -\frac{1}{4}(k_2+1)^2 + 1\geq 0,
    \end{align*}
    where the last inequality is because \(k_2\leq 1\). The proof is finished.
\end{proof}

\section{}
\label{app:mdp}

This appendix describes the numerical Markov decision process (MDP)
used to construct the benchmark in Section~\ref{sec:numerical}.
For each congestion penalty \(w>0\), we consider the scalarized
average-reward problem
\(
\max\limits_{\lambda(\cdot)}
\{\mathbb{E}_{\pi}[F(\lambda(Q))]-w\mathbb{E}_{\pi}[Q]\},
\)
where \(\pi\) is the stationary distribution induced by the
state-dependent arrival policy. Varying \(w\) generates different points
on the numerical reward--queue-length trade-off curve.

For numerical computation, we truncate the queue-length state space to
\(\mathcal{S}_N=\{0,1,\ldots,N\}\) for a sufficiently large \(N\), and
impose \(\lambda(N)=0\) at the upper boundary. We uniformize the
controlled continuous-time birth--death process at rate
\(c=\lambda_{\max}+1\). For \(1\leq q<N\), under action
\(\lambda\in[0,\lambda_{\max}]\), the transition probabilities to states
\(q+1\), \(q-1\), and \(q\) are \(\lambda/c\), \(1/c\), and
\(1-(\lambda+1)/c\), respectively. At \(q=0\), the downward transition is
absent, so the probabilities of moving to state \(1\) and remaining at
state \(0\) are \(\lambda/c\) and \(1-\lambda/c\), respectively.


We use standard average-reward policy iteration
\cite{puterman1994markov}. Since every positive state has a
positive-probability downward transition, the MDP is unichain under every
stationary policy. Together with the finiteness of the state space and
the compactness of the action space \([0,\lambda_{\max}]\), this ensures
the convergence of policy iteration in our setting
\cite{hordijk1987convergence}. 

Starting from an initial stationary policy \(\lambda^{(0)}\), the policy
evaluation step computes the average reward \(g^{(k)}\) and relative value
function \(h^{(k)}\) from
\begin{equation}
g^{(k)}+h^{(k)}(q)
=
F\!\left(\lambda^{(k)}(q)\right)-wq
+
\sum_{j\in\mathcal{S}_N}
P\!\left(j\mid q,\lambda^{(k)}(q)\right)h^{(k)}(j),
\qquad q\in\mathcal{S}_N,
\label{eq:mdp_policy_evaluation}
\end{equation}
with the normalization \(h^{(k)}(0)=0\).

The policy improvement step updates, for each \(q<N\),
\begin{equation}
\lambda^{(k+1)}(q)
\in
\arg\max_{\lambda\in[0,\lambda_{\max}]}
\left\{
F(\lambda)-wq+
\sum_{j\in\mathcal{S}_N}
P(j\mid q,\lambda)h^{(k)}(j)
\right\},
\label{eq:mdp_policy_improvement}
\end{equation}
and sets \(\lambda^{(k+1)}(N)=0\).
Since only the upward transition probability depends on \(\lambda\),
the maximization in \eqref{eq:mdp_policy_improvement} is equivalently
\[
\max_{\lambda\in[0,\lambda_{\max}]}
\left\{
F(\lambda)
+
\frac{\lambda}{c}
\bigl(h^{(k)}(q+1)-h^{(k)}(q)\bigr)
\right\}.
\]
Thus, each policy-improvement step consists only of a one-dimensional
optimization over the full continuous action interval.

We alternate policy evaluation and policy improvement until
\(\|\lambda^{(k+1)}-\lambda^{(k)}\|_{\infty}<10^{-8}\).
For each converged policy corresponding to a given \(w\), we compute its
stationary distribution using the birth--death balance equations
\(\pi_{q+1}=\pi_q\lambda(q)\), followed by normalization. We then evaluate
\(\overline{F}=\sum_{q=0}^{N}\pi_qF(\lambda(q))\) and
\(\overline{Q}=\sum_{q=0}^{N}q\pi_q\).
The corresponding point on the numerical benchmark is
\(
((F(1)-\overline{F})/F(1),\,\overline{Q}).
\)
Repeating the procedure over a range of values of \(w\) produces the MDP
benchmark reported in Section~\ref{sec:numerical}.

%% file: sample-base.bib
@article{varma2023dynamic,
  title={Dynamic pricing and matching for two-sided queues},
  author={Varma, Sushil Mahavir and Bumpensanti, Pornpawee and Maguluri, Siva Theja and Wang, He},
  journal={Operations Research},
  volume={71},
  number={1},
  pages={83--100},
  year={2023},
  publisher={INFORMS}
}

@article{varma2021throughput,
  title={Throughput optimal routing in blockchain-based payment systems},
  author={Varma, Sushil Mahavir and Maguluri, Siva Theja},
  journal={IEEE Transactions on Control of Network Systems},
  volume={8},
  number={4},
  pages={1859--1868},
  year={2021},
  publisher={IEEE}
}

@article{kim2018value,
  title={The value of dynamic pricing in large queueing systems},
  author={Kim, Jeunghyun and Randhawa, Ramandeep S},
  journal={Operations Research},
  volume={66},
  number={2},
  pages={409--425},
  year={2018},
  publisher={INFORMS}
}

@book{moment_book,
author = {Shapiro, Alexander and Dentcheva, Darinka and Ruszczyński, Andrzej},
title = {Lectures on Stochastic Programming},
publisher = {Society for Industrial and Applied Mathematics},
year = {2009},
doi = {10.1137/1.9780898718751},
address = {},
edition   = {}
}

@book{norris1998markov,
  title={Markov chains},
  author={Norris, James R},
  number={2},
  year={1998},
  publisher={Cambridge university press}
}

@article{tsitsiklis2013power,
  title={On the power of (even a little) resource pooling},
  author={Tsitsiklis, John N and Xu, Kuang},
  journal={Stochastic Systems},
  volume={2},
  number={1},
  pages={1--66},
  year={2013},
  publisher={INFORMS}
}

@article{varma2020near,
  title={Near optimal control in ride hailing platforms with strategic servers},
  author={Varma, Sushil Mahavir and Castro, Francisco and Maguluri, Siva Theja},
  journal={arXiv preprint arXiv:2008.03762},
  year={2020}
}

@article{george2001dynamic,
  title={Dynamic control of a queue with adjustable service rate},
  author={George, Jennifer M and Harrison, J Michael},
  journal={Operations research},
  volume={49},
  number={5},
  pages={720--731},
  year={2001},
  publisher={INFORMS}
}

@article{ata2006dynamic,
  title={Dynamic control of an M/M/1 service system with adjustable arrival and service rates},
  author={Ata, Bari{\c{s}} and Shneorson, Shiri},
  journal={Management Science},
  volume={52},
  number={11},
  pages={1778--1791},
  year={2006},
  publisher={INFORMS}
}

@article{adusumilli2010dynamic,
  title={Dynamic admission and service rate control of a queue},
  author={Adusumilli, Kranthi Mitra and Hasenbein, John J},
  journal={Queueing Systems},
  volume={66},
  number={2},
  pages={131--154},
  year={2010},
  publisher={Springer}
}

@article{kumar2013dynamic,
  title={Dynamic service rate control for a single-server queue with Markov-modulated arrivals},
  author={Kumar, Ravi and Lewis, Mark E and Topaloglu, Huseyin},
  journal={Naval Research Logistics (NRL)},
  volume={60},
  number={8},
  pages={661--677},
  year={2013},
  publisher={Wiley Online Library}
}

@article{xia2017optimal,
  title={Optimal control of state-dependent service rates in a MAP/M/1 queue},
  author={Xia, Li and He, Qi-Ming and Alfa, Attahiru Sule},
  journal={IEEE Transactions on Automatic Control},
  volume={62},
  number={10},
  pages={4965--4979},
  year={2017},
  publisher={IEEE}
}

@article{arapostathis2019optimal,
  title={Optimal control of Markov-modulated multiclass many-server queues},
  author={Arapostathis, Ari and Das, Anirban and Pang, Guodong and Zheng, Yi},
  journal={Stochastic Systems},
  volume={9},
  number={2},
  pages={155--181},
  year={2019},
  publisher={INFORMS}
}

@article{down2022optimal,
  title={Optimal control of energy-aware queueing systems},
  author={Down, Douglas G},
  journal={Queueing Systems},
  volume={100},
  number={3},
  pages={417--419},
  year={2022},
  publisher={Springer}
}

@article{low1974a,
  title={Optimal dynamic pricing policies for an M/M/s queue},
  author={Low, David W},
  journal={Operations Research},
  volume={22},
  number={3},
  pages={545--561},
  year={1974},
  publisher={INFORMS}
}

@article{low1974b,
  title={Optimal pricing for an unbounded queue},
  author={Low, David W.},
  journal={IBM Journal of research and Development},
  volume={18},
  number={4},
  pages={290--302},
  year={1974},
  publisher={IBM}
}

@article{chen2001state,
  title={State dependent pricing with a queue},
  author={Chen, Hong and Frank, Murray Z},
  journal={Iie Transactions},
  volume={33},
  number={10},
  pages={847--860},
  year={2001},
  publisher={Springer}
}

@article{yoon2004optimal,
  title={Optimal pricing and admission control in a queueing system with periodically varying parameters},
  author={Yoon, Seunghwan and Lewis, Mark E},
  journal={Queueing Systems},
  volume={47},
  number={3},
  pages={177--199},
  year={2004},
  publisher={Springer}
}

@article{maoui2007congestion,
  title={Congestion-dependent pricing in a stochastic service system},
  author={Maoui, Idriss and Ayhan, Hayriye and Foley, Robert D},
  journal={Advances in Applied Probability},
  volume={39},
  number={4},
  pages={898--921},
  year={2007},
  publisher={Cambridge University Press}
}

@article{ccil2011dynamic,
  title={Dynamic pricing and scheduling in a multi-class single-server queueing system},
  author={{\c{C}}il, Eren Ba{\c{s}}ar and Karaesmen, Fikri and {\"O}rmeci, E Lerzan},
  journal={Queueing Systems},
  volume={67},
  number={4},
  pages={305--331},
  year={2011},
  publisher={Springer}
}

@article{kim2003optimal,
  title={Optimal incentive-compatible pricing for M/G/1 queues},
  author={Kim, Yong J and Mannino, Michael V},
  journal={Operations Research Letters},
  volume={31},
  number={6},
  pages={459--461},
  year={2003},
  publisher={Elsevier}
}

@article{maglaras2006revenue,
  title={Revenue management for a multiclass single-server queue via a fluid model analysis},
  author={Maglaras, Constantinos},
  journal={Operations Research},
  volume={54},
  number={5},
  pages={914--932},
  year={2006},
  publisher={INFORMS}
}

@article{maglaras2003pricing,
  title={Pricing and capacity sizing for systems with shared resources: Approximate solutions and scaling relations},
  author={Maglaras, Constantinos and Zeevi, Assaf},
  journal={Management Science},
  volume={49},
  number={8},
  pages={1018--1038},
  year={2003},
  publisher={INFORMS}
}

@article{lee2014optimal,
  title={Optimal pricing and capacity sizing for the GI/GI/1 queue},
  author={Lee, Chihoon and Ward, Amy R},
  journal={Operations Research Letters},
  volume={42},
  number={8},
  pages={527--531},
  year={2014},
  publisher={Elsevier}
}

@inproceedings{kingman1961single,
  title={The single server queue in heavy traffic},
  author={Kingman, John FC},
  booktitle={Mathematical Proceedings of the Cambridge Philosophical Society},
  volume={57},
  number={4},
  pages={902--904},
  year={1961},
  organization={Cambridge University Press}
}

@article{williams1998diffusion,
  title={Diffusion approximations for open multiclass queueing networks: sufficient conditions involving state space collapse},
  author={Williams, Ruth J},
  journal={Queueing systems},
  volume={30},
  number={1},
  pages={27--88},
  year={1998},
  publisher={Springer}
}

@article{harrison1998heavy,
  title={Heavy traffic analysis of a system with parallel servers: asymptotic optimality of discrete-review policies},
  author={Harrison, J Michael},
  journal={The Annals of Applied Probability},
  volume={8},
  number={3},
  pages={822--848},
  year={1998},
  publisher={Institute of Mathematical Statistics}
}

@article{gamarnik2012multiclass,
  title={Multiclass multiserver queueing system in the Halfin--Whitt heavy traffic regime: Asymptotics of the stationary distribution},
  author={Gamarnik, David and Stolyar, Alexander L},
  journal={Queueing Systems},
  volume={71},
  number={1},
  pages={25--51},
  year={2012},
  publisher={Springer}
}

@article{maguluri2018optimal,
  title={Optimal heavy-traffic queue length scaling in an incompletely saturated switch},
  author={Maguluri, Siva Theja and Burle, Sai Kiran and Srikant, Rayadurgam},
  journal={Queueing Systems},
  volume={88},
  number={3},
  pages={279--309},
  year={2018},
  publisher={Springer}
}

@inproceedings{huang2009delay,
  title={Delay reduction via Lagrange multipliers in stochastic network optimization},
  author={Huang, Longbo and Neely, Michael J},
  booktitle={2009 7th International Symposium on Modeling and Optimization in Mobile, Ad Hoc, and Wireless Networks},
  pages={1--10},
  year={2009},
  organization={IEEE}
}

@article{anjos2025optimal,
  title={Optimal electric vehicle charging with dynamic pricing, customer preferences and power peak reduction},
  author={Anjos, Miguel F and Brotcorne, Luce and Guillot, Ga{\"e}l},
  journal={INFOR: Information Systems and Operational Research},
  pages={1--20},
  year={2025},
  publisher={Taylor \& Francis}
}

@article{zang2024impact,
  title={The impact of information-granularity and prioritization on patients’ care modality choice},
  author={Zang, Lin and Hu, Yue and Roet-Green, Ricky and Sun, Shujing},
  year={2024},
  publisher={Stanford University Graduate School of Business Research Paper}
}

@inproceedings{maguluri2012stochastic,
  title={Stochastic models of load balancing and scheduling in cloud computing clusters},
  author={Maguluri, Siva Theja and Ying, Lei},
  booktitle={2012 Proceedings IEEE Infocom},
  pages={702--710},
  year={2012},
  organization={IEEE}
}

@article{erlang1909theory,
  title={The theory of probabilities and telephone conversations},
  author={Erlang, Agner Krarup},
  journal={Nyt. Tidsskr. Mat. Ser. B},
  volume={20},
  pages={33--39},
  year={1909}
}

@article{halfin1981heavy,
  title={Heavy-traffic limits for queues with many exponential servers},
  author={Halfin, Shlomo and Whitt, Ward},
  journal={Operations research},
  volume={29},
  number={3},
  pages={567--588},
  year={1981},
  publisher={Informs}
}

@article{shi2019process,
  title={Process flexibility for multiperiod production systems},
  author={Shi, Cong and Wei, Yehua and Zhong, Yuan},
  journal={Operations Research},
  volume={67},
  number={5},
  pages={1300--1320},
  year={2019},
  publisher={INFORMS}
}

@article{balseiro2025dynamic,
  title={Dynamic pricing for reusable resources: The power of two prices},
  author={Balseiro, Santiago R and Ma, Will and Zhang, Wenxin},
  journal={Operations Research},
  year={2025},
  publisher={INFORMS}
}

@article{besbes2025dynamic,
  title={Dynamic resource allocation: Algorithmic design principles and spectrum of achievable performances},
  author={Besbes, Omar and Kanoria, Yash and Kumar, Akshit},
  journal={Operations Research},
  volume={73},
  number={3},
  pages={1273--1288},
  year={2025},
  publisher={INFORMS}
}

@book{puterman1994markov,
  title={Markov Decision Processes: Discrete Stochastic Dynamic Programming},
  author={Puterman, Martin L.},
  year={1994},
  publisher={John Wiley \& Sons}
}

@article{hordijk1987convergence,
  title={On the Convergence of Policy Iteration in Finite State Undiscounted Markov Decision Processes: The Unichain Case},
  author={Hordijk, Arie and Puterman, Martin L.},
  journal={Mathematics of Operations Research},
  volume={12},
  number={1},
  pages={163--176},
  year={1987}
}

@article{gilardoni2010pinsker,
  title={On Pinsker's and Vajda's Type Inequalities for Csisz{\'a}r's $ f $-Divergences},
  author={Gilardoni, Gustavo L},
  journal={IEEE Transactions on Information Theory},
  volume={56},
  number={11},
  pages={5377--5386},
  year={2010},
  publisher={IEEE}
}
